\providecommand{\U}[1]{\protect\rule{.1in}{.1in}}
\providecommand{\U}[1]{\protect \rule{.1in}{.1in}}
\newtheorem{theorem}{Theorem}[section]
\newtheorem{proposition}{Proposition}[section]
\newtheorem{lemma}{Lemma}[section]
\renewcommand{\@biblabel}[1]{}
\begin{document}

\begin{center}
{\Large Robust Tail Index Estimation under Random Censoring via Minimum
Density Power Divergence}\bigskip

{\large Nour Elhouda Guesmia, Abdelhakim Necir}$^{\ast}${\large , Djamel
Meraghni}\medskip\newline

{\small \textit{Laboratory of Applied Mathematics, Mohamed Khider University,
Biskra, Algeria}}\medskip\medskip
\end{center}

\noindent\textbf{Abstract}\smallskip

\noindent We propose a robust estimator for the tail index of Pareto-type
distributions under random right-censoring, constructed within the minimum
density power divergence (MDPD) framework and based on the Nelson--Aalen
estimator of the cumulative hazard function. To our knowledge, this is the
first application of the MDPD methodology to tail index estimation in the
presence of random censoring. Under mild regularity conditions and within the
weak censoring regime, the estimator is shown to be consistent and
asymptotically normal. Its finite-sample performance is assessed through Monte
Carlo simulations, revealing improved robustness--efficiency trade-offs
compared to standard non-robust tail index estimators. Robustness is
investigated under both pre-censoring and post-censoring contamination
schemes. While pre-censoring contamination provides a meaningful framework for
robustness assessment, post-censoring contamination directly alters the
observable data and highlights the sensitivity of reconstruction-based
approaches. The practical relevance of the method is illustrated using an
insurance claims dataset with light censoring and fully observable extremes.
An additional application to AIDS survival data is included for illustrative
purposes, emphasizing the challenges encountered under stronger
censoring.\medskip

\noindent\textbf{Keywords:} Asymptotic normality; Heavy-tails; Robust
estimation; Random right-censoring; Tail index.\medskip

\noindent\textbf{AMS 2020 Subject Classification:} 62G32; 62G05; 62G20; 62G35.

\vfill

\vfill

\noindent{\small $^{\text{*}}$Corresponding author:
ah.necir@univ-biskra.dz\newline\noindent\textit{E-mail address:}\newline
nourelhouda.guesmia@univ-biskra.dz\texttt{\ }(N.~Guesmia)\newline
djamel.meraghni@univ-biskra.dz (D.~Meraghni)}\newline

\section{\textbf{Introduction\label{sec1}}}

\noindent Right-censored data pose a common challenge in statistics, as values
are only known up to a certain threshold, while those beyond it remain
unobserved. This situation frequently arises in fields such as survival
analysis, reliability engineering, and insurance. Pareto-type distributions
are commonly used to model data exhibiting extreme values. These distributions
are characterized by a relatively high probability of producing observations
far from the mean, in contrast with the normal distribution. They are
particularly relevant in applications such as insurance, finance, and
environmental studies, where rare events can have a substantial
impact.\smallskip\ 

\noindent Let $X_{1},X_{2},...,X_{n}$ be a sample of size $n\geq1$ from a
random variable (rv) $X,$ and let $C_{1},C_{2},...,C_{n}$ be another sample
from a rv $C,$ both defined on a probability space $\left(  \Omega
,\mathcal{A},\mathbf{P}\right)  ,$ with continuous cumulative distribution
functions (cdfs) $F$ and $G,$ respectively. Assume that $X$\ and $C$ are
independent. We also suppose that $X$ is right censored by $C,$ meaning that
for each index $1\leq j\leq n,$ we can only observe the variable
\[
Z_{j}:=\min\left\{  X_{j},C_{j}\right\}
\]
and the indicator variable
\[
\delta_{j}:=\mathbb{I}_{\left\{  X_{j}\leq C_{j}\right\}  },
\]
which determines whether or not $X$ has been observed. We assume that the tail
functions $\overline{F}:=1-F$ and $\overline{G}:=1-G$ are regularly varying at
infinity (or Pareto-type) with positive tail indices $\gamma_{1}>0$ and
$\gamma_{2}>0,$ allowing for slowly varying deviations from the strict Pareto
form. More precisely, for any $x>0,$%
\begin{equation}
\lim_{u\rightarrow\infty}\frac{\overline{F}(ux)}{\overline{F}(u)}%
=x^{-1/\gamma_{1}}\text{ and }\lim_{u\rightarrow\infty}\frac{\overline{G}%
(ux)}{\overline{G}(u)}=x^{-1/\gamma_{2}}.\label{RV}%
\end{equation}
Equivalently, these tail behaviors can be expressed in terms of slowly varying
functions $\ell_{1}\left(  x\right)  $ and $\ell_{2}\left(  x\right)  $ to
account for deviations from strict Pareto distributions:%
\begin{equation}
\overline{F}(x)=x^{-1/\gamma_{1}}\ell_{1}\left(  x\right)  \text{ and
}\overline{G}(x)=x^{-1/\gamma_{2}}\ell_{2}\left(  x\right)  ,\label{RVbis}%
\end{equation}
where $\ell_{1}\left(  x\right)  $ and $\ell_{2}\left(  x\right)  $ vary
slowly at infinity, i.e., for any fixed $t>0,$
\[
\ell_{j}\left(  tx\right)  /\ell_{j}\left(  x\right)  \rightarrow1,\text{ as
}x\rightarrow\infty,\text{ }j=1,2.
\]
These expressions make explicit the Pareto-type form of the distributions and
clarify that the limit relations correspond to first-order asymptotics, while
the slowly varying functions explicitly capture deviations from a strict
Pareto model.\medskip

\noindent We denote the cdf of $Z$ by $H.$ Then, using the independence of $X$
and $C,$ we have
\[
\overline{H}=\overline{F}\times\overline{G},
\]
which implies that $\overline{H}$ is also regularly varying at infinity, with
tail index
\[
\gamma:=\frac{\gamma_{1}\gamma_{2}}{\gamma_{1}+\gamma_{2}}.
\]
This characterization of the tail behavior of the observed variable $Z$
provides a firm basis for estimating the tail index $\gamma$ under right
censoring. In the subsequent sections, we develop estimation procedures that
appropriately account for censored observations and examine their asymptotic
properties, including robustness to contamination.\medskip

\noindent In the presence of extreme values and right-censored data, various
methods have been proposed for estimating the tail index. Several techniques
have been developed to address the specific challenges posed by such data.
Many studies have focused on modifying traditional tail index estimation
methods, notably Hill's estimator \citep[][]{Hill75} to accommodate censored
observations. In particular, \cite{EnFG08} adapted this estimator to handle
right-censored data, leading to the following form:
\[
\widehat{\gamma}_{1}^{\left(  \mathrm{EFG}\right)  }:=\frac{\widehat{\gamma
}^{\left(  H\right)  }}{\widehat{p}},
\]
where%
\[
\widehat{\gamma}^{\left(  \mathrm{H}\right)  }:=k^{-1}%
{\displaystyle\sum\limits_{i=1}^{k}}
\log\left(  Z_{n-i+1:n}/Z_{n-k:n}\right)  ,
\]
denotes the classical Hill estimator corresponding to the tail index $\gamma,$
and
\[
\widehat{p}:=k^{-1}%
{\displaystyle\sum_{i=1}^{k}}
\delta_{\left[  n-i+1:n\right]  },
\]
represents an estimator of the proportion of upper non-censored observations,
\begin{equation}
p:=\frac{\gamma_{2}}{\gamma_{1}+\gamma_{2}}.\label{p}%
\end{equation}
The integer sequence $k=k_{n}$ represents the number of top order statistics
used in the estimation of the tail index, such that $k\rightarrow\infty$ and
$k/n\rightarrow0$ as $n\rightarrow\infty.$ This ensures that enough extreme
observations are included for reliable estimation, while avoiding the
inclusion of too many moderate values that could bias the tail index
estimation.\smallskip

\noindent The sequence of rvs $Z_{1:n}\leq...\leq Z_{n:n}$ represents the
order statistics pertaining to the sample $Z_{1},...,Z_{n}$ and $\delta
_{\left[  1:n\right]  },...,\delta_{\left[  n:n\right]  }$ denotes the
corresponding concomitant values, satisfying $\delta_{\left[  j:n\right]
}=\delta_{i}$ for $i$ such that $Z_{j:n}=Z_{i}.$ This notation clarifies the
alignment between the ordered observations and their censoring indicators,
which is essential for constructing the censored Hill-type
estimator.\smallskip

\noindent On the basis of a Kaplan-Meier integration, \cite{WW2014} proposed a
consistent estimator to the tail index $\gamma_{1}$ defined by
\[
\widehat{\gamma}_{1}^{\left(  \mathrm{W}\right)  }:=%
{\displaystyle\sum\limits_{i=1}^{k}}
\frac{\overline{F}_{n}^{\left(  KM\right)  }\left(  Z_{n-i:n}\right)
}{\overline{F}_{n}^{\left(  KM\right)  }\left(  Z_{n-k:n}\right)  }\log
\frac{Z_{n-i+1:n}}{Z_{n-i:n}},
\]
where
\[
F_{n}^{\left(  \mathrm{KM}\right)  }\left(  x\right)  :=\underset{Z_{i:n}\leq
x}{1-%
{\displaystyle\prod}
}\left(  \dfrac{n-i}{n-i+1}\right)  ^{\delta_{\left[  i:n\right]  }},
\]
denotes the popular Kaplan-Meier estimator of the cdf $F$ \citep[] []{KM58}.
The asymptotic normality of $\widehat{\gamma}_{1}^{\left(  \mathrm{W}\right)
}$ is established in \cite{BWW2019} by considering Hall's model
\citep[] []{Hall82}. Bias reduction in tail index estimation under censoring
has been addressed in \cite{BBWG2016}, \cite{BMV2018} and \cite{BWW2019}%
.\smallskip

\noindent By using a Nelson-Aalen integration, more recently \cite{MNS2025}
derived a new estimator for $\gamma_{1}$ given by%
\[
\widehat{\gamma}_{1}^{\left(  \mathrm{MNS}\right)  }:=%
{\displaystyle\sum\limits_{i=1}^{k}}
\frac{\delta_{\left[  n-i+1:n\right]  }}{i}\frac{\overline{F}_{n}^{\left(
NA\right)  }\left(  Z_{n-i+1:n}\right)  }{\overline{F}_{n}^{\left(  NA\right)
}\left(  Z_{n-k:n}\right)  }\log\frac{Z_{n-i+1:n}}{Z_{n-k:n}},
\]
where
\begin{equation}
F_{n}^{\left(  \mathrm{NA}\right)  }\left(  x\right)  =1-\prod_{Z_{i:n}<x}%
\exp\left\{  -\frac{\delta_{\left[  i:n\right]  }}{n-i+1}\right\}
,\label{Nelson}%
\end{equation}
denotes the well-known Nelson-Aalen estimator of the cdf $F$
\citep[] []{Nelson1972}. Note that
\begin{equation}
\frac{\overline{F}_{n}^{\left(  \mathrm{NA}\right)  }\left(  Z_{n-i+1:n}%
\right)  }{\overline{F}_{n}^{\left(  \mathrm{NA}\right)  }\left(
Z_{n-k:n}\right)  }=\prod_{j=i+1}^{k}\exp\left\{  -\frac{\delta_{\left[
n-j+1:n\right]  }}{j}\right\}  ,\label{ratio}%
\end{equation}
thus the formula of $\widehat{\gamma}_{1}^{\left(  \mathrm{MNS}\right)  }$ can
be rewritten as
\[
\widehat{\gamma}_{1}^{\left(  \mathrm{MNS}\right)  }:=%
{\displaystyle\sum\limits_{i=1}^{k}}
a_{ik}\log\frac{Z_{n-i+1:n}}{Z_{n-k:n}},
\]
where
\begin{equation}
a_{ik}:=\frac{\delta_{\left[  n-i+1:n\right]  }}{i}\prod_{j=i+1}^{k}%
\exp\left\{  -\frac{\delta_{\left[  n-j+1:n\right]  }}{j}\right\}
.\label{aik}%
\end{equation}
This representation highlights the connection between the Nelson--Aalen
weights $a_{ik}$ and the Kaplan--Meier-based integration, and makes explicit
the recursive weighting scheme that adjusts for censoring in the upper
tail.\smallskip

\noindent The asymptotic properties of both the Kaplan-Meier-based estimator
$\widehat{\gamma}_{1}^{\left(  \mathrm{W}\right)  }$ and the
Nelson-Aalen-based estimator $\widehat{\gamma}_{1}^{\left(  \mathrm{MNS}%
\right)  }$ are established provided that $p>1/2.$ This condition ensures that
a sufficiently large fraction of extreme observations remains uncensored,
which is essential for reliable estimation of $\gamma_{1}.$ Equivalently, it
can be expressed as $\gamma_{1}<\gamma_{2},$ meaning that the censoring
distribution has a lighter tail than the distribution of interest. If
$p\leq1/2$ (or $\gamma_{1}\geq\gamma_{2}$)$,$ the effective tail sample size
may be too small, and the Gaussian approximation underlying these estimators
may break down. In practice, this condition serves as a natural guideline for
the applicability of both estimators, emphasizing the importance of the
relative tail heaviness between the observed and censoring
distributions.\smallskip

\noindent The authors showed that the two tail index estimators
$\widehat{\gamma}_{1}^{\left(  \mathrm{W}\right)  }$ and $\widehat{\gamma}%
_{1}^{\left(  \mathrm{MNS}\right)  }$ exhibit similar performances in terms of
both bias and mean squared error (MSE). Indeed, studies comparing the
Kaplan--Meier and Nelson--Aalen estimators have shown that the latter exhibits
an almost identical statistical behavior; see, for instance,
\cite{Colosimo2002}. On the other hand, establishing the asymptotic properties
of extreme Kaplan--Meier integrals poses certain technical difficulties. To
overcome this issue, \cite{MNS2025} introduced a Nelson--Aalen tail-product
limit process and established its Gaussian approximation in terms of standard
Wiener processes. Using this approach, they proved the consistency and
asymptotic normality of the proposed estimator $\widehat{\gamma}_{1}^{\left(
\mathrm{MNS}\right)  }$ under the first- and second-order regular variation
conditions, namely the assumptions $\left(  \ref{RV}\right)  $ and $\left(
\ref{second-order}\right)  $ respectively. As expected, it was also shown that
both the asymptotic bias and variance of $\widehat{\gamma}_{1}^{\left(
\mathrm{W}\right)  }$ and $\widehat{\gamma}_{1}^{\left(  \mathrm{MNS}\right)
}$ are equal, highlighting the practical equivalence of the two estimators
under the stated assumptions.\smallskip

\noindent The most common method for estimating the parameters of an extreme
value distribution in extreme value analysis relies on maximum likelihood
estimation (MLE) method. Although these estimators possess desirable
asymptotic properties, they can be sensitive to outlying observations from the
assumed extreme value models; see, for instance, \cite{BS2000}. Consequently,
robust statistical methods provide a better alternative to mitigate the
influence of outliers and deviations from the underlying parametric models. It
has been shown that employing robust statistical ideas in extreme value theory
improves the quality and precision of estimates \citep[][]{DE2006}.\smallskip

\noindent\cite{JS2004} appear to be the first authors to employ the minimum
density power divergence (MDPD) of \cite{Basu98} for the robust estimation of
the parameters of an extreme value distribution. Since then, this divergence
measure has become a widely used tool for robust estimation of parameters of
extreme value distributions. \cite{Kim2008}, \cite{DGG13}, \cite{GGV2014}, and
\cite{DGG2021} have applied the MDPD approach to estimate the tail index and
quantiles from Pareto-type distributions. Recently, \cite{Ghosh2017} proposed
a robust MDPD estimator for a real-valued tail index. This estimator is a
robust generalization of the estimator proposed by \cite{MB2003} and addresses
non-identical distributions in the exponential regression model using the
approach in \cite{GB2013}. Furthermore, \cite{DGG13} employed the MDPD concept
on an extended Pareto distribution for relative excesses over a high
threshold. \cite{MWG2023A} and \cite{MWGY2023B} recently proposed a robust
estimator for the tail index of a Pareto-type distribution using the MDPD
approach applied to an exponential regression model.\smallskip

\noindent While robust estimation techniques have been extensively developed
for complete data settings, their extension to the context of randomly
right-censored observations remains comparatively less explored. Such
censoring mechanisms are prevalent in survival analysis, reliability theory,
and actuarial applications, where incomplete information introduces additional
statistical challenges. Classical estimators of the extreme value index under
censoring, such as Hill-type approaches or those based on Kaplan-Meier and
Nelson-Aalen integrals, often remain sensitive to outliers and model
deviations, thereby limiting their practical robustness in real-world
applications.\textbf{\smallskip}

\noindent To the best of our knowledge, apart from the recent work of
\cite{DGG2021}, there exists no robust estimator specifically tailored to the
tail index under random censoring. Their approach focuses on the conditional
tail index, employing kernel smoothing techniques and local threshold
selection for each covariate value. While the estimator effectively reduces
bias and variance in the presence of contamination, it relies on a separate
selection of tuning parameters and is restricted to the conditional setting,
which complicates its direct application to the unconditional tail index. This
limitation underscores the need for a robust and flexible framework capable of
handling both unconditional and censored extreme value scenarios, motivating
the development of the MDPD-based estimator proposed in this work.\smallskip

\noindent The remainder of this paper is organized as follows. Section
\ref{sec2} provides a brief overview of the MDPD estimation method, originally
introduced by \cite{Basu98}, highlighting its robust properties in the
presence of outliers. Section \ref{sec3} discusses in detail the asymptotic
properties (consistency and asymptotic normality) of the proposed estimator,
with full proofs deferred to Section \ref{sec6}. The finite-sample performance
of the estimator is examined in Section \ref{sec4}, through a comprehensive
simulation study, including comparisons with existing tail index estimators
under censoring and contamination. Section \ref{sec5} presents real-data
applications, including a dataset on insurance claims (where censoring is
relatively light, $p>1/2$) and the classical AIDS survival dataset,
illustrating the practical relevance and robustness of the proposed
methodology.\smallskip

\noindent Finally, Appendix A compiles several technical lemmas and
propositions that are instrumental for the theoretical development, while
Appendix B contains the figures and additional results related to the
simulation study.

\section{\textbf{Minimum density power divergence\label{sec2}}}

\noindent\cite{Basu98} introduced a new measure of divergence between two
probability densities $\ell$ and $f,$ known as the density power divergence
(DPD), defined by
\begin{equation}
d_{\alpha}\left(  f,\ell\right)  :=\left\{
\begin{array}
[c]{ll}%
\int_{\mathbb{R}}\left[  \ell^{1+\alpha}\left(  x\right)  -\left(  1+\dfrac
{1}{\alpha}\right)  \ell^{\alpha}\left(  x\right)  f\left(  x\right)
+\dfrac{1}{\alpha}f^{1+\alpha}\left(  x\right)  \right]  dx, & \alpha
>0\medskip\\
\int_{\mathbb{R}}f\left(  x\right)  \log\dfrac{f\left(  x\right)  }%
{\ell\left(  x\right)  }dx, & \alpha=0,
\end{array}
\right.  \label{d}%
\end{equation}
where $\alpha\geq0$ is a tuning parameter controlling the trade-off between
efficiency and robustness. The case $\alpha=0$ is the limit of the general
expression $\left(  \alpha>0\right)  $ as $\alpha\downarrow0$ yielding the
classical Kullback-Leibler divergence\textbf{ }$d_{0}\left(  f,\ell\right)  .$
Let us consider a parametric family of densities $\left\{  \ell_{\theta
}:\Theta\subset\mathbb{R}^{p}\right\}  $ and suppose that the goal is to
estimate the parameter $\theta.$ Let $F$ denote the cdf corresponding to the
density $f.$ The MDPD functional $T_{\alpha}\left(  F\right)  $ is defined as
\[
d_{\alpha}\left(  f,\ell_{T_{\alpha}\left(  F\right)  }\right)  =\min
_{\theta\in\Theta}d_{\alpha}\left(  f,\ell_{\theta}\right)  .
\]
The term $\int f^{1+\alpha}\left(  x\right)  dx$ in $\left(  \ref{d}\right)  $
the divergence does not depend on $\theta,$ and thus does not affect the
minimization. Therefore, minimizing $d_{\alpha}\left(  f,\ell_{\theta}\right)
$ over $\theta\in\Theta$ reduces to minimizing
\[
\delta_{\alpha}\left(  f;\theta\right)  :=\left\{
\begin{array}
[c]{lc}%
\int_{\mathbb{R}}\ell_{\theta}^{1+\alpha}\left(  x\right)  dx-\left(
1+\dfrac{1}{\alpha}\right)  \int_{\mathbb{R}}\ell_{\theta}^{\alpha}\left(
x\right)  dF\left(  x\right)  , & \alpha>0,\medskip\\
-\int_{\mathbb{R}}\log\ell_{\theta}\left(  x\right)  dF\left(  x\right)  , &
\alpha=0.
\end{array}
\right.
\]
Given a sample $X_{1},...,X_{n}$ from the cdf $F,$ we estimate $\delta
_{\alpha}\left(  f;\theta\right)  $ by replacing $F$ with its empirical
counterpart $F_{n}\left(  x\right)  :=n^{-1}\sum_{i=1}^{n}\mathbb{I}_{\left\{
X_{i}\leq x\right\}  }.$ The MDPD estimator is then the minimizer (over
$\theta\in\Theta)$ of the empirical objective function
\[
L_{n.\alpha}\left(  \theta\right)  =\left\{
\begin{array}
[c]{lc}%
\int_{\mathbb{R}}\ell_{\theta}^{1+\alpha}\left(  x\right)  dx-\left(
1+\dfrac{1}{\alpha}\right)  \dfrac{1}{n}%
{\displaystyle\sum\limits_{i=1}^{n}}
\ell_{\theta}^{\alpha}\left(  X_{i}\right)  , & \alpha>0,\medskip\\
-\dfrac{1}{n}%
{\displaystyle\sum\limits_{i=1}^{n}}
\log\ell_{\theta}\left(  X_{i}\right)  , & \alpha=0.
\end{array}
\right.  .
\]
That is, $\widehat{\theta}_{n,\alpha}:=\arg\min_{\theta\in\Theta}L_{n.\alpha
}\left(  \theta\right)  ,$ which satisfies the estimating equation
\[
\left\{
\begin{array}
[c]{lc}%
\int_{\mathbb{R}}\dfrac{d}{d\theta}\ell_{\theta}^{1+\alpha}\left(  x\right)
dx-\left(  1+\dfrac{1}{\alpha}\right)  \dfrac{1}{n}%
{\displaystyle\sum\limits_{i=1}^{n}}
\dfrac{d}{d\theta}\ell_{\theta}^{\alpha}\left(  X_{i}\right)  =0, &
\alpha>0,\medskip\\
\dfrac{1}{n}%
{\displaystyle\sum\limits_{i=1}^{n}}
\dfrac{d}{d\theta}\log\ell_{\theta}\left(  X_{i}\right)  =0, & \alpha=0.
\end{array}
\right.
\]
Small $\alpha$ yields high efficiency with limited robustness, while moderate
$\alpha$ values provide strong robustness under contamination with minor
efficiency loss compared to MLE under correct model specification.

\subsection{MDPD estimator of $\gamma_{1}$ for right-censored data}

\noindent For a fixed threshold $u>0,$ consider the relative excess rv
$Y:=X/u$ conditional on $X>u.$ Its cdf is given by
\[
F_{u}\left(  x\right)  :=1-\frac{\overline{F}\left(  ux\right)  }{\overline
{F}\left(  u\right)  },
\]
with corresponding probability density function $f_{u}.$ Assuming that
$\overline{F}\in\mathcal{RV}_{\left(  -1/\gamma_{1}\right)  },$ it follows
that $F_{u}\left(  x\right)  \approx1-x^{-1/\gamma_{1}},$ as $u\rightarrow
\infty.$ Therefore, the rv $Y$ may be approximated by the Pareto distribution
with density
\[
\ell_{\gamma_{1}}\left(  x\right)  :=\frac{d}{dx}\left(  1-x^{-1/\gamma_{1}%
}\right)  =\gamma_{1}^{-1}x^{-1-1/\gamma_{1}},\text{ for }x\geq1.
\]
The corresponding DPD objective function is
\begin{equation}
\mathcal{L}_{u,\alpha}^{\ast}\left(  \gamma_{1}\right)  :=\left\{
\begin{array}
[c]{lc}%
\int_{1}^{\infty}\ell_{\gamma_{1}}^{1+\alpha}\left(  x\right)  dx-\left(
1+\dfrac{1}{\alpha}\right)  \int_{u}^{\infty}\ell_{\gamma_{1}}^{\alpha}\left(
x/u\right)  d\dfrac{F\left(  x\right)  }{\overline{F}\left(  u\right)  }, &
\alpha>0,\medskip\\
\int_{u}^{\infty}\log\ell_{\gamma_{1}}\left(  x/u\right)  d\dfrac{F\left(
x\right)  }{\overline{F}\left(  u\right)  }, & \alpha=0,
\end{array}
\right.  \label{L}%
\end{equation}
Letting $u=Z_{n-k:n}$ and replacing $F$ with the Nelson-Aalen estimator
$F_{n}^{\left(  NA\right)  }$ gives the empirical objective function\textbf{ }%
\begin{equation}
\mathcal{L}_{k,\alpha}^{\ast}\left(  \gamma_{1}\right)  :=\left\{
\begin{array}
[c]{lc}%
\int_{1}^{\infty}\ell_{\gamma_{1}}^{1+\alpha}\left(  x\right)  dx-\left(
1+\dfrac{1}{\alpha}\right)  \int_{1}^{\infty}\ell_{\gamma_{1}}^{\alpha}\left(
x/Z_{n-k:n}\right)  d\dfrac{F_{n}^{\left(  NA\right)  }\left(  x\right)
}{\overline{F}_{n}^{\left(  NA\right)  }\left(  Z_{n-k:n}\right)  }, &
\alpha>0,\medskip\\
\int_{1}^{\infty}\log\ell_{\gamma_{1}}\left(  x/Z_{n-k:n}\right)
d\dfrac{F_{n}^{\left(  NA\right)  }\left(  x\right)  }{\overline{F}%
_{n}^{\left(  NA\right)  }\left(  Z_{n-k:n}\right)  }, & \alpha=0.
\end{array}
\right.  \label{emp-L}%
\end{equation}
The MDPD tail index estimator under random censoring, denoted $\widehat{\gamma
}_{1,\alpha},$ is then obtained by minimizing $\mathcal{L}_{k,\alpha}^{\ast
}\left(  \gamma_{1}\right)  .$ That is, by solving the following equation:
\[
\left\{
\begin{array}
[c]{lc}%
\begin{array}
[c]{l}%
\int_{1}^{\infty}\dfrac{d}{d\gamma_{1}}\ell_{\gamma_{1}}^{1+\alpha}\left(
\dfrac{x}{Z_{n-k:n}}\right)  dx\\
\ \ \ \ \ \ \ \ \ \ \ \ \ \ \ \ -\left(  1+\dfrac{1}{\alpha}\right)  \int%
_{1}^{\infty}\dfrac{d}{d\gamma_{1}}\ell_{\gamma_{1}}^{\alpha}\left(  \dfrac
{x}{Z_{n-k:n}}\right)  d\dfrac{F_{n}^{\left(  NA\right)  }\left(  x\right)
}{\overline{F}_{n}^{\left(  NA\right)  }\left(  Z_{n-k:n}\right)  }=0,
\end{array}
& \alpha>0,\medskip\\
\int_{1}^{\infty}\dfrac{d}{d\gamma_{1}}\log\ell_{\gamma_{1}}\left(  \dfrac
{x}{Z_{n-k:n}}\right)  d\dfrac{F_{n}^{\left(  NA\right)  }\left(  x\right)
}{\overline{F}_{n}^{\left(  NA\right)  }\left(  Z_{n-k:n}\right)  }=0, &
\alpha=0.
\end{array}
\right.
\]
By elementary calculations:%
\[
\dfrac{d}{d\gamma_{1}}\ell_{\gamma_{1}}^{\alpha}\left(  x\right)
=\frac{\alpha}{\gamma_{1}^{2+\alpha}}\frac{\log x-\gamma_{1}}{x^{\alpha\left(
1+1/\gamma_{1}\right)  }},
\]%
\[
\dfrac{d}{d\gamma_{1}}\log\ell_{\gamma_{1}}\left(  x\right)  =\dfrac
{d}{d\gamma_{1}}\log\left(  \gamma_{1}^{-1}x^{-1-1/\gamma_{1}}\right)
=-\frac{\gamma_{1}-\log x}{\gamma_{1}^{2}}%
\]
and%
\[
\int_{1}^{\infty}\dfrac{d}{d\gamma_{1}}\ell_{\gamma_{1}}^{1+\alpha}\left(
x\right)  dx=-\frac{\alpha\left(  \alpha+1\right)  \left(  \gamma
_{1}+1\right)  }{\gamma_{1}^{\alpha+1}\left(  \alpha\left(  1+\gamma
_{1}\right)  +1\right)  ^{2}}.
\]
Hence, the estimating equation for $\widehat{\gamma}_{1,\alpha}$ is
\begin{equation}
\left\{
\begin{array}
[c]{lc}%
{\displaystyle\int_{Z_{n-k:n}}^{\infty}}
\dfrac{\gamma_{1}-\log\left(  x/Z_{n-k:n}\right)  }{\left(  x/Z_{n-k:n}%
\right)  ^{\alpha\left(  1+1/\gamma_{1}\right)  }}d\dfrac{F_{n}^{\left(
NA\right)  }\left(  x\right)  }{\overline{F}_{n}^{\left(  NA\right)  }\left(
Z_{n-k:n}\right)  }=\dfrac{\alpha\gamma_{1}\left(  \gamma_{1}+1\right)
}{\left(  1+\alpha+\alpha\gamma_{1}\right)  ^{2}}, & \alpha>0,\medskip\\%
{\displaystyle\int_{Z_{n-k:n}}^{\infty}}
\left(  \gamma_{1}-\log\dfrac{x}{Z_{n-k:n}}\right)  d\dfrac{F_{n}^{\left(
NA\right)  }\left(  x\right)  }{\overline{F}_{n}^{\left(  NA\right)  }\left(
Z_{n-k:n}\right)  }=0, & \alpha=0.
\end{array}
\right.  \label{int-formula}%
\end{equation}
\cite{MNS2025} stated that
\begin{equation}
dF_{n}^{\left(  \mathrm{NA}\right)  }\left(  x\right)  =\frac{\overline{F}%
_{n}^{\left(  \mathrm{NA}\right)  }\left(  x\right)  dH_{n}^{\left(  1\right)
}\left(  x\right)  }{\overline{H}_{n}\left(  x^{-}\right)  },\label{dFn}%
\end{equation}
where $H_{n}$ and $H_{n}^{\left(  1\right)  }$ are the empirical counterparts
of the cdf $H$ and the sub-distribution $H^{\left(  1\right)  },$ respectively
defined by:%
\[
H_{n}\left(  x\right)  :=\frac{1}{n}\sum_{i=1}^{n}\mathbb{I}_{\left\{
Z_{i:n}\leq x\right\}  }\text{ and }H_{n}^{\left(  1\right)  }\left(
x\right)  :=\frac{1}{n}\sum_{i=1}^{n}\delta_{\left[  i:n\right]  }%
\mathbb{I}_{\left\{  Z_{i:n}\leq x\right\}  },
\]
(see, for example, \cite{SW86} page 294). Here, $f\left(  x^{-}\right)  $
denotes the left limit, at $x,$ of the function $f.$ By substituting
$dF_{n}^{\left(  NA\right)  }\left(  x\right)  $ with its expression in
equation $\left(  \ref{int-formula}\right)  ,$ we get%
\begin{equation}
\left\{
\begin{array}
[c]{lc}%
{\displaystyle\sum\limits_{i=1}^{k}}
a_{ik}\dfrac{\gamma_{1}-\log\left(  Z_{n-i+1:n}/Z_{n-k:n}\right)  }{\left(
Z_{n-i+1:n}/Z_{n-k:n}\right)  ^{\alpha\left(  1+1/\gamma_{1}\right)  }}%
=\dfrac{\alpha\gamma_{1}\left(  \gamma_{1}+1\right)  }{\left(  1+\alpha
+\alpha\gamma_{1}\right)  ^{2}}, & \alpha>0,\medskip\\%
{\displaystyle\sum\limits_{i=1}^{k}}
a_{ik}\left\{  \gamma_{1}-\log\dfrac{Z_{n-i+1:n}}{Z_{n-k:n}}\right\}  =0, &
\alpha=0,
\end{array}
\right.  \label{estim-equa}%
\end{equation}
where $a_{ik}$ are the Nelson-Aalen weights defined earlier in $\left(
\ref{aik}\right)  .$ \smallskip

\noindent This estimator provides the first robust generalization of the tail
index under random censoring using the MDPD approach, bridging the gap between
efficiency and robustness in extreme value estimation.

\section{\textbf{Consistency and asymptotic normality\label{sec3}}}

\noindent From now on, we denote the true value of $\gamma_{1}$ by $\gamma
_{1}^{\ast}.$

\begin{theorem}
\textbf{\label{TH1}}(Consistency) Assume that the cdfs $F$ and $G$ satisfy the
first-order regular variation condition $\left(  \ref{RV}\right)  $ and that
$p>1/2.$ Let $k=k_{n}$ be a sequence of integers such that%
\[
k\rightarrow\infty\text{ and }k/n\rightarrow0,\text{ as }n\rightarrow\infty.
\]
Then, for any $\alpha>0,$ with probability tending to $1,$ there exists a
solution $\widehat{\gamma}_{1,\alpha}$ to the estimating equation $\left(
\ref{estim-equa}\right)  $ such that%
\[
\widehat{\gamma}_{1,\alpha}\overset{\mathbf{P}}{\rightarrow}\gamma_{1}^{\ast
},\text{ as }n\rightarrow\infty.
\]

\end{theorem}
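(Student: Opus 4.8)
The plan is to regard the estimating equation $(\ref{estim-equa})$ (in the case $\alpha>0$) as a root-finding problem and to produce a root in a shrinking neighborhood of $\gamma_1^{\ast}$ by an intermediate-value argument. To this end I would introduce the random estimating function
\[
\Psi_{k,\alpha}\left(  \gamma_{1}\right)  :=\sum_{i=1}^{k}a_{ik}\,\frac{\gamma_{1}-\log\left(  Z_{n-i+1:n}/Z_{n-k:n}\right)  }{\left(  Z_{n-i+1:n}/Z_{n-k:n}\right)  ^{\alpha\left(  1+1/\gamma_{1}\right)  }}-\frac{\alpha\gamma_{1}\left(  \gamma_{1}+1\right)  }{\left(  1+\alpha+\alpha\gamma_{1}\right)  ^{2}},
\]
so that $\widehat{\gamma}_{1,\alpha}$ is by definition a zero of $\Psi_{k,\alpha}$. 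Since all the ratios $Z_{n-i+1:n}/Z_{n-k:n}$ are $\geq1$ and $Z_{n-k:n}>0$ almost surely, the map $\gamma_{1}\mapsto\Psi_{k,\alpha}\left(  \gamma_{1}\right)  $ is continuous on $\left(  0,\infty\right)  $. The strategy is to replace the weighted empirical tail measure $\sum_{i}a_{ik}\,\varepsilon_{Z_{n-i+1:n}/Z_{n-k:n}}$ (with $\varepsilon_{x}$ the Dirac mass at $x$) by the standard Pareto$\left(  \gamma_{1}^{\ast}\right)  $ law on $\left[  1,\infty\right)  $, giving the deterministic population counterpart
\[
\Psi_{\alpha}\left(  \gamma_{1}\right)  :=\int_{1}^{\infty}\frac{\gamma_{1}-\log y}{y^{\alpha\left(  1+1/\gamma_{1}\right)  }}\,\frac{1}{\gamma_{1}^{\ast}}\,y^{-1-1/\gamma_{1}^{\ast}}\,dy-\frac{\alpha\gamma_{1}\left(  \gamma_{1}+1\right)  }{\left(  1+\alpha+\alpha\gamma_{1}\right)  ^{2}},
\]
and then to establish (i) that $\Psi_{\alpha}$ vanishes and changes sign at $\gamma_{1}^{\ast}$, and (ii) that $\Psi_{k,\alpha}$ converges to $\Psi_{\alpha}$ uniformly on a neighborhood of $\gamma_{1}^{\ast}$.

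For (i), a direct evaluation of the two elementary integrals $\int_{1}^{\infty}y^{-\beta}dy=\left(  \beta-1\right)  ^{-1}$ and $\int_{1}^{\infty}\left(  \log y\right)  y^{-\beta}dy=\left(  \beta-1\right)  ^{-2}$, with $\beta=\left(  1+\alpha\right)  \left(  1+1/\gamma_{1}^{\ast}\right)  $, shows that $\Psi_{\alpha}\left(  \gamma_{1}^{\ast}\right)  =0$. This is expected, since $\Psi_{\alpha}\left(  \gamma_{1}\right)  =\gamma_{1}^{2+\alpha}\left(  1+\alpha\right)  ^{-1}\,\tfrac{d}{d\gamma_{1}}\mathcal{L}_{\infty,\alpha}^{\ast}\left(  \gamma_{1}\right)  $, a nonvanishing positive multiple of the derivative of the limiting (Pareto) density-power-divergence objective $\mathcal{L}_{\infty,\alpha}^{\ast}$ evaluated at the correctly specified model. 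The general theory of the MDPD functional, together with the identifiability of the Pareto family, guarantees that $\mathcal{L}_{\infty,\alpha}^{\ast}$ attains a strict minimum at the true value $\gamma_{1}^{\ast}$; differentiating twice under the integral sign confirms that this second derivative is strictly positive, whence $\Psi_{\alpha}^{\prime}\left(  \gamma_{1}^{\ast}\right)  >0$. Consequently, for every sufficiently small $\epsilon>0$ one has $\Psi_{\alpha}\left(  \gamma_{1}^{\ast}-\epsilon\right)  <0<\Psi_{\alpha}\left(  \gamma_{1}^{\ast}+\epsilon\right)  $.

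The crux of the argument is (ii), the uniform convergence of $\Psi_{k,\alpha}$ to $\Psi_{\alpha}$ on a compact neighborhood $I_{\epsilon}:=\left[  \gamma_{1}^{\ast}-\epsilon,\gamma_{1}^{\ast}+\epsilon\right]  $. Pointwise, for each fixed $\gamma_{1}$ the weighted sum $\sum_{i=1}^{k}a_{ik}\,g_{\gamma_{1}}\left(  Z_{n-i+1:n}/Z_{n-k:n}\right)  $, with $g_{\gamma_{1}}\left(  y\right)  :=\left(  \gamma_{1}-\log y\right)  y^{-\alpha\left(  1+1/\gamma_{1}\right)  }$, converges in probability to $\int_{1}^{\infty}g_{\gamma_{1}}\left(  y\right)  \left(  \gamma_{1}^{\ast}\right)  ^{-1}y^{-1-1/\gamma_{1}^{\ast}}dy$; this is precisely the law-of-large-numbers content of the Nelson--Aalen tail-product-limit process approximation established in \cite{MNS2025} (see also the technical lemmas of Appendix A), which yields convergence of such weighted functionals for any function integrable against the limiting Pareto law, and it is here that the assumption $p>1/2$ intervenes, guaranteeing the finiteness of the second-order tail integrals underlying that approximation. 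To upgrade this to uniform convergence over $I_{\epsilon}$, I would establish stochastic equicontinuity: differentiating $g_{\gamma_{1}}$ in $\gamma_{1}$ gives the envelope bound $\sup_{\gamma_{1}\in I_{\epsilon}}\left\vert \partial_{\gamma_{1}}g_{\gamma_{1}}\left(  y\right)  \right\vert \leq C\left(  1+\log y+\left(  \log y\right)  ^{2}\right)  y^{-\alpha\left(  1+1/\left(  \gamma_{1}^{\ast}+\epsilon\right)  \right)  }$, and applying the same weighted law of large numbers to this dominating envelope shows $\sup_{\gamma_{1}\in I_{\epsilon}}\left\vert \Psi_{k,\alpha}^{\prime}\left(  \gamma_{1}\right)  \right\vert =O_{\mathbf{P}}\left(  1\right)  $. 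The resulting uniform Lipschitz control, combined with pointwise convergence on a finite grid of $I_{\epsilon}$, yields $\sup_{\gamma_{1}\in I_{\epsilon}}\left\vert \Psi_{k,\alpha}\left(  \gamma_{1}\right)  -\Psi_{\alpha}\left(  \gamma_{1}\right)  \right\vert \overset{\mathbf{P}}{\rightarrow}0$. This equicontinuity step is the main obstacle: because $g_{\gamma_{1}}$ is not monotone in $\gamma_{1}$ and the weights $a_{ik}$ are themselves random, controlling the derivative process requires the full strength of the tail-process approximation rather than a naive Dini-type monotonicity argument.

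Finally I would combine (i) and (ii). Fix $\epsilon>0$ small enough that the sign change $\Psi_{\alpha}\left(  \gamma_{1}^{\ast}-\epsilon\right)  <0<\Psi_{\alpha}\left(  \gamma_{1}^{\ast}+\epsilon\right)  $ holds. By the uniform convergence in (ii), with probability tending to $1$ one has $\Psi_{k,\alpha}\left(  \gamma_{1}^{\ast}-\epsilon\right)  <0<\Psi_{k,\alpha}\left(  \gamma_{1}^{\ast}+\epsilon\right)  $, and the continuity of $\Psi_{k,\alpha}$ then produces, via the intermediate value theorem, a root $\widehat{\gamma}_{1,\alpha}\in\left(  \gamma_{1}^{\ast}-\epsilon,\gamma_{1}^{\ast}+\epsilon\right)  $ of $(\ref{estim-equa})$. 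Since $\epsilon>0$ is arbitrary, this establishes the existence, with probability tending to $1$, of a solution with $\widehat{\gamma}_{1,\alpha}\overset{\mathbf{P}}{\rightarrow}\gamma_{1}^{\ast}$, as claimed.
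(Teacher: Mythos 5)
Your proof is essentially correct, but it takes a genuinely different route from the paper. The paper follows the Cram\'er--Lehmann--Casella scheme (as adapted by Dierckx et al.): it Taylor-expands the empirical MDPD \emph{objective} $\mathcal{L}_{k,\alpha}$ to third order around $\gamma_{1}^{\ast}$ and shows, via three lemmas ($\pi_{k}^{(1)}(\gamma_{1}^{\ast})\overset{\mathbf{P}}{\rightarrow}0$, $\pi_{k}^{(2)}(\gamma_{1}^{\ast})\overset{\mathbf{P}}{\rightarrow}\eta^{\ast}>0$, and $\pi_{k}^{(3)}(\widetilde{\gamma}_{1})=O_{\mathbf{P}}(1)$), that the objective exceeds its value at $\gamma_{1}^{\ast}$ throughout a small punctured neighborhood with probability tending to one, so a local minimizer --- hence a root of the estimating equation --- exists inside $I_{\epsilon}$. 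You instead work with the \emph{estimating function} itself (a Z-estimator argument): the population version $\Psi_{\alpha}$ vanishes at $\gamma_{1}^{\ast}$ with strictly positive derivative, the empirical version converges to it, and the intermediate value theorem produces a root. Your route is in fact more economical for a scalar parameter: for the sign-change argument you only need convergence of $\Psi_{k,\alpha}$ at the two endpoints $\gamma_{1}^{\ast}\pm\epsilon$, so your stochastic-equicontinuity step (and with it any control of derivatives of the estimating function, the analogue of the paper's Lemma \ref{lemma3}) is dispensable overhead; by contrast, the paper's argument is the one that generalizes to multidimensional parameters and identifies the root directly as a local minimizer of the divergence. Two caveats. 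First, your claim that strict minimality of the limiting objective plus ``differentiating twice under the integral sign'' gives $\Psi_{\alpha}^{\prime}(\gamma_{1}^{\ast})>0$ needs the explicit computation: strict minimality alone does not rule out a vanishing second derivative; the paper supplies exactly this via Proposition \ref{prop1}, where $\eta^{\ast}=(1+\alpha)\int_{1}^{\infty}\bigl(\Psi_{\gamma_{1}^{\ast},1}^{(1)}(x)\bigr)^{2}\ell_{\gamma_{1}^{\ast}}^{\alpha-1}(x)\,dx>0$. Second, both your pointwise law of large numbers and the paper's own lemmas rest on the Nelson--Aalen tail product-limit approximation of Meraghni et al., which is formally stated under the second-order condition; this mismatch with the purely first-order hypothesis of Theorem \ref{TH1} is a weakness you share with the paper rather than a defect of your argument specifically, though your version additionally needs the approximation for functionals indexed by $\gamma_{1}\neq\gamma_{1}^{\ast}$, a routine but unstated extension of the paper's Lemma \ref{lemma1}.
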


\noindent This result establishes the consistency of the MDPD tail index
estimator under random censoring, underlining that a sufficient proportion of
uncensored extreme observations $(p>1/2)$ is crucial for reliable
estimation.\smallskip

\noindent Since weak approximations of extreme value theory-based statistics
are typically achieved within the second-order framework
\citep[see, i.e.,][]{deHS96}, we assume that the cdf $F$ satisfies the
well-known second-order condition of regular variation. Specifically, for any
$x>0:$%
\begin{equation}
\underset{t\rightarrow\infty}{\lim}\frac{U_{F}\left(  tx\right)  /U_{F}\left(
t\right)  -x^{\gamma_{1}}}{A_{1}^{\ast}\left(  t\right)  }=x^{\gamma_{1}%
}\dfrac{x^{\tau_{1}}-1}{\tau_{1}}, \label{second-order}%
\end{equation}
where $\tau_{1}\leq0$\ is the second-order parameter, and $A_{1}^{\ast}$ is a
function tending to $0,$ not changing sign near infinity and whose absolute
value is regularly varying with index $\tau_{1}.$\ When $\tau_{1}=0,$ the
expression $\left(  x^{\tau_{1}}-1\right)  /\tau_{1}$ is interpreted as $\log
x.\smallskip$

\noindent We denote the quantile and tail quantile functions of a given cdf
$D$ by
\[
\mathcal{D}^{\leftarrow}\left(  s\right)  :=\inf\left\{  x:\mathcal{D}\left(
x\right)  \geq s\right\}  ,\text{ }0<s<1
\]
and
\[
U_{\mathcal{D}}\left(  t\right)  :=\mathcal{D}^{\leftarrow}\left(
1-1/t\right)  ,t>1,
\]
respectively.$\smallskip$

\noindent For convenience, we set $h:=U_{H}\left(  n/k\right)  $ and define
$A_{1}\left(  t\right)  :=A_{1}^{\ast}\left(  1/\overline{F}\left(  t\right)
\right)  ,$ for $t>1.$

\begin{theorem}
\textbf{\label{TH2}(}Asymptotic normality\textbf{) }Assume that the cdfs $F$
and $G$ satisfy the second-order condition $\left(  \ref{second-order}\right)
$ and that $p>1/2.$ Let $k=k_{n}$ be a sequence of integers such that
\[
k\rightarrow\infty,\text{ }k/n\rightarrow0\text{ and }\sqrt{k}A_{1}\left(
h\right)  \rightarrow\lambda<\infty,\text{ as }n\rightarrow\infty.
\]
Then for any $\alpha>0:$%
\[
\left(  1+\frac{1}{\alpha}\right)  ^{-1}\eta^{\ast}\sqrt{k}\left(
\widehat{\gamma}_{1,\alpha}-\gamma_{1}^{\ast}\right)  \overset{\mathcal{D}%
}{\rightarrow}\mathcal{N}\left(  \lambda\mu,\sigma^{2}\right)  ,\text{ as
}n\rightarrow\infty,
\]
where $\eta^{\ast},$ $\mu$ and $\sigma^{2}$ are defined in $\left(
\ref{eta-start}\right)  ,$ $\left(  \ref{mu}\right)  $ and $\left(
\ref{sigma}\right)  ,$ respectively.
\end{theorem}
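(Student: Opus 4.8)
The plan is to treat $\widehat{\gamma}_{1,\alpha}$ as an $M$-estimator defined through the estimating equation $\left(\ref{estim-equa}\right)$ and to linearize it around the true value $\gamma_{1}^{\ast}$. Writing
\[
\Phi_{n}\left(\gamma_{1}\right):=\sum_{i=1}^{k}a_{ik}\frac{\gamma_{1}-\log\left(Z_{n-i+1:n}/Z_{n-k:n}\right)}{\left(Z_{n-i+1:n}/Z_{n-k:n}\right)^{\alpha\left(1+1/\gamma_{1}\right)}}-\frac{\alpha\gamma_{1}\left(\gamma_{1}+1\right)}{\left(1+\alpha+\alpha\gamma_{1}\right)^{2}},
\]
the estimator solves $\Phi_{n}\left(\widehat{\gamma}_{1,\alpha}\right)=0$, and by Theorem \ref{TH1} we have $\widehat{\gamma}_{1,\alpha}\overset{\mathbf{P}}{\rightarrow}\gamma_{1}^{\ast}$. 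A first-order Taylor (mean value) expansion then gives
\[
\sqrt{k}\left(\widehat{\gamma}_{1,\alpha}-\gamma_{1}^{\ast}\right)=-\frac{\sqrt{k}\,\Phi_{n}\left(\gamma_{1}^{\ast}\right)}{\Phi_{n}^{\prime}\left(\bar{\gamma}_{1}\right)},
\]
for some $\bar{\gamma}_{1}$ lying between $\widehat{\gamma}_{1,\alpha}$ and $\gamma_{1}^{\ast}$, so that $\bar{\gamma}_{1}\overset{\mathbf{P}}{\rightarrow}\gamma_{1}^{\ast}$. The whole proof thus reduces to two separate tasks: a weak limit for the numerator $\sqrt{k}\,\Phi_{n}\left(\gamma_{1}^{\ast}\right)$, and a convergence in probability for the denominator $\Phi_{n}^{\prime}\left(\bar{\gamma}_{1}\right)$, after which Slutsky's theorem yields the stated normal law.

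For the denominator, I would differentiate $\Phi_{n}$ in $\gamma_{1}$, which again produces a weighted sum $\sum_{i=1}^{k}a_{ik}\,\dot{\psi}_{\gamma_{1}}\left(Z_{n-i+1:n}/Z_{n-k:n}\right)$ minus a smooth deterministic term, where $\dot{\psi}_{\gamma_{1}}$ is the $\gamma_{1}$-derivative of the integrand. Using the consistency of the Nelson--Aalen weighted sums for smooth, tail-integrable test functions (the product-limit statistics of \cite{MNS2025} and the technical lemmas of Appendix A), together with $\bar{\gamma}_{1}\overset{\mathbf{P}}{\rightarrow}\gamma_{1}^{\ast}$ and a local-uniform control of the sum over a shrinking neighbourhood of $\gamma_{1}^{\ast}$, I would show that $\Phi_{n}^{\prime}\left(\bar{\gamma}_{1}\right)$ converges in probability to the Pareto expectation of $\dot{\psi}_{\gamma_{1}^{\ast}}$. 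A direct computation identifies this limit with $-\left(1+1/\alpha\right)^{-1}\eta^{\ast}$, matching the normalizing constant in the statement.

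The heart of the argument is the numerator. Since the deterministic term equals the Pareto expectation $\mathbf{E}_{\gamma_{1}^{\ast}}\left[\psi_{\gamma_{1}^{\ast}}\left(Y\right)\right]$ of the integrand evaluated at the truth, one has
\[
\Phi_{n}\left(\gamma_{1}^{\ast}\right)=\sum_{i=1}^{k}a_{ik}\,\psi_{\gamma_{1}^{\ast}}\left(Z_{n-i+1:n}/Z_{n-k:n}\right)-\mathbf{E}_{\gamma_{1}^{\ast}}\left[\psi_{\gamma_{1}^{\ast}}\left(Y\right)\right].
\]
The plan is to recast the weighted sum as an integral against the Nelson--Aalen tail product-limit measure and to invoke its Gaussian approximation, established in \cite{MNS2025} in terms of standard Wiener processes. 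Splitting the centred statistic into a purely stochastic part and a deterministic drift, the stochastic part converges to a Wiener integral whose variance is $\sigma^{2}$, while the drift, controlled by the second-order condition $\left(\ref{second-order}\right)$ through the calibration $\sqrt{k}A_{1}\left(h\right)\rightarrow\lambda$, contributes the asymptotic bias $\lambda\mu$. Because $\psi_{\gamma_{1}^{\ast}}\left(y\right)=\left(\gamma_{1}^{\ast}-\log y\right)y^{-\alpha\left(1+1/\gamma_{1}^{\ast}\right)}$ decays fast enough against the Pareto tail, all the integrals defining $\mu$ and $\sigma^{2}$ converge, and the condition $p>1/2$ guarantees that the censoring does not deplete the upper tail so severely as to break the variance bound, exactly as in \cite{MNS2025}.

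The main obstacle I anticipate is this last transfer step: rigorously carrying the functional Gaussian approximation of the product-limit process over to the nonlinear statistic $\sum_{i=1}^{k}a_{ik}\,\psi_{\gamma_{1}^{\ast}}\left(\cdot\right)$ and justifying the interchange of the weak limit with the integration. This demands uniform control of the approximation remainder near both endpoints of the index range -- the extreme upper order statistics ($i$ small) and the lower end of the tail fragment ($i$ near $k$), where the product-limit process is most variable -- together with a tightness and uniform-integrability argument ensuring that the contribution of these boundary regions is asymptotically negligible. Once the numerator limit $\sqrt{k}\,\Phi_{n}\left(\gamma_{1}^{\ast}\right)\overset{\mathcal{D}}{\rightarrow}\mathcal{N}\left(\lambda\mu,\sigma^{2}\right)$ and the denominator limit $\Phi_{n}^{\prime}\left(\bar{\gamma}_{1}\right)\overset{\mathbf{P}}{\rightarrow}-\left(1+1/\alpha\right)^{-1}\eta^{\ast}$ are in hand, Slutsky's theorem delivers the asserted convergence.
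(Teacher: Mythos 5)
Your plan is essentially the paper's own proof: the paper likewise linearizes the score $\pi_{k}^{\left(  1\right)  }$ (which equals $\left(  1+\alpha\right)  \gamma_{1}^{-\left(  2+\alpha\right)  }$ times your $\Phi_{n}$) around $\gamma_{1}^{\ast}$ by Taylor expansion, shows the derivative term tends to $\eta^{\ast}$ (Lemmas \ref{lemma2} and \ref{lemma3}), and treats the score at the truth exactly as you propose --- rewriting it by integration by parts as $\int_{1}^{\infty}\phi_{\alpha,\gamma_{1}^{\ast}}\left(  x\right)  D_{k}\left(  x\right)  dx$ and invoking the Wiener-process approximation of the Nelson--Aalen tail product-limit process of \cite{MNS2025}, split into a centred Gaussian integral (variance $\sigma^{2}$), a drift $\sqrt{k}A_{1}\left(  h\right)  \mu\rightarrow\lambda\mu$, and a remainder made negligible by the weighted sup-bound of Proposition \ref{prop3} together with the integrability of $x^{-\epsilon/p\gamma_{1}}\left\vert \phi_{\alpha,\gamma_{1}^{\ast}}\left(  x\right)  \right\vert$, which is precisely the "transfer step" you flagged as the main obstacle. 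The only flaw is bookkeeping: with your explicit $\Phi_{n}$, the denominator limit is $+\gamma_{1}^{\ast2+\alpha}\eta^{\ast}/\left(  1+\alpha\right)  $ rather than $-\left(  1+1/\alpha\right)  ^{-1}\eta^{\ast}$, and the numerator's limiting law is rescaled by that same (negative) factor, so the two errors cancel in the ratio and your stated conclusion is unaffected.
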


\noindent This theorem establishes the asymptotic normality of the MDPD
estimator under random censoring, emphasizing again that the condition $p>1/2$
is essential for the validity of the Gaussian approximation.

\section{\textbf{Simulation Study}\label{sec4}}

\noindent In this section, we investigate the finite-sample performance of the
proposed MDPD tail index estimator $\widehat{\gamma}_{1,\alpha}$ through an
extensive Monte Carlo simulation study. The main objective is to assess both
efficiency and robustness in the presence of random right censoring and
contamination affecting the upper tail of the distribution. Particular
attention is paid to the impact of contamination on bias, MSE, and stability
with respect to the choice of the number of upper order statistics. The
simulation design is chosen so as to remain fully consistent with the
theoretical framework developed in the previous sections.

\subsection{Compared estimators and efficiency--robustness trade-off}

\noindent The MDPD estimator $\widehat{\gamma}_{1,\alpha}$ is computed for
$\alpha\in\{0.1,0.3,0.5\},$ allowing us to explore the efficiency--robustness
trade-off induced by the tuning parameter. Small values of $\alpha$ emphasize
efficiency under correct model specification, whereas larger values enhance
robustness against contamination in the upper tail.\smallskip

\noindent For comparison purposes, the following benchmark estimators are included:

\begin{itemize}
\item the adapted Hill-type estimator $\widehat{\gamma}_{1}^{(\mathrm{EFG})}$,

\item the Kaplan--Meier-based estimator $\widehat{\gamma}_{1}^{(\mathrm{W})},$

\item the Nelson--Aalen-based estimator $\widehat{\gamma}_{1}^{(\mathrm{MNS}%
)},$ corresponding to the special case $\widehat{\gamma}_{1,0}.$
\end{itemize}

\noindent This selection encompasses both classical and censoring-adapted tail
index estimators that are standard in the extreme value literature. It
therefore provides a relevant and informative benchmark for assessing the
efficiency and robustness of the proposed MDPD-based estimator under random censoring.

\subsection{Simulation models and censoring design}

\noindent We consider two classical heavy-tailed distributions for the latent
lifetime variable $X,$ which are widely used as benchmark models in extreme
value analysis.\smallskip

\noindent The first model is the Burr distribution, defined by
\[
F(x)=1-\left(  1+x^{1/\eta}\right)  ^{-\eta/\gamma_{1}},\qquad x>0,
\]
with shape parameter $\eta=0.25$ and tail index $\gamma_{1}\in\{0.3,0.5\}.$%
\smallskip

\noindent The second model is the Fr\'{e}chet distribution, defined by
\[
F(x)=\exp\left(  -x^{-1/\gamma_{1}}\right)  ,\qquad x>0,
\]
with tail index $\gamma_{1}\in\{0.3,0.5\}.$\smallskip

\noindent Right censoring is generated independently through a random variable
$C$ drawn from the same parametric family as $X,$ with censoring tail index
$\gamma_{2}.$ The model parameters are chosen such that the proportion of
uncensored extreme observations satisfies $p>1/2.$ This condition ensures that
the observed variable $Z=\min(X,C)$ remains regularly varying and fulfills the
weak censoring condition introduced in Section~\ref{sec1}. This construction
allows us to control the censoring mechanism while preserving the asymptotic
tail behavior of the observed data.\smallskip

\noindent The choices of the censoring level and the contamination proportion
are guided by both theoretical requirements and practical considerations. The
proportion of uncensored extreme observations is fixed at $p\in\{0.55,0.7\},$
corresponding respectively to moderate and mild censoring regimes. Both values
satisfy the weak censoring condition $p>1/2,$ which guarantees identifiability
of the tail index and preserves regular variation of the observed data.
Varying $p$ allows us to assess the impact of censoring severity on estimation
accuracy while remaining within a statistically coherent framework.

\subsection{Contamination schemes}

\noindent To assess robustness, we consider contamination scenarios of
increasing severity, ranging from the ideal uncontaminated setting to adverse
configurations in which contamination affects the upper tail of the lifetime
distribution. The contamination intensity is controlled by a parameter
$\epsilon\in(0,1),$ which is common to all contaminated
configurations.\smallskip

\noindent In all contaminated settings, contamination is modeled through a
mixture mechanism acting on the latent lifetime variable. Specifically, the
distribution of $X$ is given by
\[
F_{\epsilon}=(1-\epsilon)F+\epsilon F_{c},
\]
where $F$ denotes the target distribution and $F_{c}$ is a heavier-tailed
contaminating distribution with tail index $\gamma_{c}>\gamma_{1}.$ This
mixture formulation offers a flexible and interpretable framework for
robustness analysis, capturing the presence of extreme outliers while
maintaining identifiability under independent censoring.\smallskip

\noindent The condition $\gamma_{c}>\gamma_{1}$ ensures that contamination
primarily affects the upper tail of the distribution, generating extreme
observations that act as outliers relative to the target model. This setting
reflects realistic departures from the ideal model and provides a stringent
stress test for robustness. The resulting mixture formulation offers a
flexible and interpretable framework for robustness analysis while preserving
identifiability under independent censoring.\smallskip

\noindent Several contamination configurations are considered:\smallskip

\noindent In \textit{the uncontaminated} case $(\epsilon=0),$ the lifetime
variable $X$ and the censoring variable $C$ are generated independently from
their target distributions $F$ and $G,$ respectively. The observed sample
$(Z,\delta)$ is obtained through the standard random right-censoring
mechanism. This configuration serves as a benchmark for assessing efficiency
under correct model specification and for quantifying the efficiency loss
induced by robustness-oriented procedures.\smallskip

\noindent In \textit{the pre-censoring contamination} setting, contamination
is introduced at the level of the latent lifetime variable $X,$ prior to
censoring. A proportion $\epsilon\in(0,1)$ of the $X$-values is replaced by
extreme observations generated from a heavier-tailed distribution with tail
index $\gamma_{c}>\gamma_{1}.$ The censoring variable $C$ is generated
independently from its target distribution $G,$ whose tail behavior is
governed by $\gamma_{2}.$ The censoring mechanism is then applied to the
contaminated latent sample, yielding the observed pairs $(Z,\delta).$ This
framework preserves the independent censoring structure and directly reflects
the impact of contamination on the tail behavior of the lifetime distribution,
making it a natural, coherent, and identifiable setting for robustness
analysis in both parametric and nonparametric frameworks.\smallskip

\noindent An alternative contamination mechanism consists in perturbing the
observable pair $(Z,\delta)$ after the censoring mechanism has taken place.
While such \textit{post-censoring contamination} may be interpreted as data
corruption or recording errors at the observational stage, it does not
correspond to a well-defined censoring model in a nonparametric framework. By
directly altering the joint distribution of $(Z,\delta),$ this mechanism
breaks the link between the observed data and the underlying lifetime and
censoring distributions $(F,G),$ thereby rendering reconstruction-based
inference ill-posed. For this reason, post-censoring contamination is not
retained in the simulation study and is viewed only as an illustrative stress
mechanism rather than as a statistically coherent robustness
framework.\smallskip

\noindent Finally, contamination affecting the censoring variable $C$ is not
considered. Such contamination would alter the censoring mechanism itself and
may violate the independent censoring assumption underlying most theoretical
results for censored extreme value estimation. In particular, contamination of
$C$ directly modifies the joint distribution of the observable pair
$(Z,\delta)$ in a way that cannot generally be associated with any
well-defined latent lifetime distribution $F$. As a consequence, tail index
estimation loses a clear statistical interpretation. Accordingly, robustness
assessments in this paper focus exclusively on contamination acting on the
lifetime variable prior to censoring.

\subsection{Simulation scenarios and settings}

\noindent Although infinitely many combinations of target, censoring, and
contamination distributions could be considered, we restrict attention to two
representative scenarios. These scenarios are standard in the extreme value
literature and allow for a clear and informative assessment of both efficiency
and robustness under censoring and contamination.\smallskip

\noindent In Scenario~S1 (Burr/Burr/Burr), the target variable $X$ follows a
Burr distribution, the censoring variable $C$ is also Burr-distributed, and
contamination is introduced through a Burr distribution with parameters
$\eta_{c}=0.25$ and $\gamma_{c}\in\{0.6,0.8\}.$\smallskip

\noindent In Scenario~S2 (Fr\'{e}chet/Fr\'{e}chet/Fr\'{e}chet), the target
variable $X$ follows a Fr\'{e}chet distribution, the censoring mechanism is
Fr\'{e}chet-distributed, and contamination is generated from a Fr\'{e}chet
distribution with tail index $\gamma_{c}\in\{0.6,0.8\}.$\smallskip

\noindent These configurations allow us to examine robustness across different
tail behaviors while maintaining a coherent and interpretable simulation
framework. Contamination intensity is controlled by the parameter $\epsilon
\in(0,1),$ which governs the proportion of observations affected by
contamination in the upper tail.\smallskip

\noindent For each scenario, contamination levels $\epsilon\in\{0,0.15,0.40\}$
are considered. These values correspond respectively to an uncontaminated
setting, a moderately contaminated setting, and a severely contaminated
configuration designed to stress-test robustness properties while remaining
within a controlled and interpretable simulation framework.

\subsection{Simulation results under pre-censoring contamination}

\noindent This section investigates the finite-sample performance of the
proposed MDPD-based tail index estimator under pre-censoring contamination,
that is, when contamination affects the latent lifetime variable prior to the
censoring mechanism. As emphasized in Section~\ref{sec4}, this contamination
framework is statistically coherent with both parametric and nonparametric
censored models, preserves the independent censoring structure, and provides a
meaningful benchmark for assessing robustness with respect to perturbations of
the underlying tail distribution.

\paragraph{Scenario S1 (Burr distribution).}

\noindent\emph{Case $\gamma_{1}=0.3$ (Figures~\ref{fig1}--\ref{fig2}).} In the
absence of contamination $\left(  \epsilon=0\right)  ,$ all estimators exhibit
comparable behavior for intermediate values of the number of upper order
statistics $k.$ As expected, classical estimators such as $\widehat{\gamma
}_{1}^{(\mathrm{EFG})},$ $\widehat{\gamma}_{1}^{(\mathrm{W})},$ and
$\widehat{\gamma}_{1}^{(\mathrm{MNS})}$ achieve good efficiency in this ideal
setting, although their performance remains sensitive to the choice of the
threshold parameter $k,$ a well-known feature of tail index
estimation.\smallskip

\noindent When contamination is introduced, a markedly different behavior
emerges. Classical estimators deteriorate rapidly as $k$ increases, displaying
pronounced bias and instability even for moderate contamination levels. This
degradation reflects their intrinsic sensitivity to extreme observations
generated by the heavier-tailed contaminating component. In contrast, the MDPD
estimators exhibit a clear robustness effect: while small values of $\alpha$
retain some sensitivity to contamination, the estimators corresponding to
$\alpha\geq0.3$ remain stable over a wide range of $k,$ with substantially
reduced bias and MSE. This clearly illustrates the role of the tuning
parameter $\alpha$ in controlling the influence of outlying extremes and
stabilizing inference.\smallskip

\noindent\emph{Case $\gamma_{1}=0.5$ (Figures~\ref{fig3}--\ref{fig4}).} For
heavier tails, the adverse effect of contamination becomes more pronounced.
Even in the uncontaminated case, classical estimators show increased
variability, reflecting the intrinsic difficulty of tail index estimation in
heavier-tailed regimes. Under contamination, these estimators become severely
biased and unstable, with performance degrading rapidly as $k$
grows.\smallskip

\noindent By contrast, the MDPD estimators maintain a remarkable degree of
stability. In particular, estimators with $\alpha\geq0.3$ exhibit nearly flat
trajectories across a broad range of $k,$ indicating strong robustness with
respect to both tail heaviness and contamination intensity. These results
confirm that robustification becomes increasingly critical as the tail index
increases, that is, when extremes play a dominant role.

\paragraph{Scenario S2 (Fr\'echet distribution).}

\noindent\emph{Case $\gamma_{1}=0.3$ (Figures~\ref{fig5}--\ref{fig6}).} The
qualitative behavior observed in the Burr case carries over to the Fr\'{e}chet
setting. In the absence of contamination, all estimators perform
satisfactorily for moderate values of $k,$ although classical estimators again
display noticeable sensitivity to the choice of the threshold.\smallskip

\noindent Under contamination, classical estimators experience a substantial
loss of accuracy, with increasing bias and variability. This confirms that
their lack of robustness is not distribution-specific but persists across
different heavy-tailed models. In contrast, the MDPD estimators demonstrate a
robust behavior similar to that observed in Scenario S1, with larger values of
$\alpha$ yielding improved stability and resistance to contamination-induced
distortions.\smallskip

\noindent\emph{Case $\gamma_{1}=0.5$ (Figures~\ref{fig7}--\ref{fig8}).} This
configuration represents the most adverse scenario, combining heavy tails with
severe contamination. In this setting, classical estimators perform
particularly poorly, often exhibiting dramatic bias and erratic behavior
across the range of $k$ values.\smallskip

\noindent The MDPD estimators, however, remain reliable even in this
challenging regime. Although a moderate increase in variance is observed for
very large values of $\alpha,$ this effect is largely compensated by a
substantial reduction in bias, so that the overall MSE remains significantly
lower than that of the classical competitors. This highlights the practical
advantage of the MDPD approach in extreme scenarios where standard methods
effectively break down.

\paragraph{Overall discussion.}

\noindent Overall, the simulation results provide strong empirical evidence
that the proposed MDPD-based tail index estimator offers a robust and reliable
alternative for censored extreme value analysis. While classical estimators
may retain efficiency under ideal conditions, their performance deteriorates
sharply in the presence of contaminated extremes. In contrast, the MDPD
estimators achieve a favorable efficiency--robustness trade-off, particularly
for moderate to large values of the tuning parameter $\alpha.$\smallskip

\noindent Taken together, these findings demonstrate that the proposed
methodology is well suited for realistic applications, where both censoring
and contamination are unavoidable and robustness with respect to extreme
observations is a key requirement.

\section{\textbf{Real data application} \textbf{\label{sec5}}}

\subsection{Insurance loss data (weak censoring)}

\noindent We applied our estimation procedures to an insurance loss dataset
available in the R package \texttt{copula}, originally collected by the
Insurance Services Office, Inc. The dataset contains $1500$ observations,
among which $34$ are right-censored because the actual loss exceeds the policy
limit, which varies across contracts. The observed variables are:

\begin{itemize}
\item $X_{j}:$ actual loss amount of claim $j.$

\item $C_{j}:$ policy limit for contract $j=1,\dots,1500.$
\end{itemize}

\noindent Because of policy limits, some losses are censored when $X_{j}%
>C_{j}.$ In these cases, the observed value is
\[
Z_{j}=\min(X_{j},C_{j}),
\]
with censoring indicator
\[
\delta_{j}=\mathbb{I}_{\{X_{j}\leq C_{j}\}}.
\]
These censored claims correspond to contracts where reported losses reach the
policy ceiling, indicating that the true loss exceeds the observed value. They
reflect the presence of extreme losses in the upper tail but do not fully
capture all extreme observations. This dataset has been examined in prior
studies, such as \cite{FV98}, \cite{Klugman99}, and \cite{DPV06}.\smallskip

\noindent Before proceeding to tail index estimation, it is essential to
assess the effective amount of tail information available in the presence of
censoring. In particular, we first estimate the proportion of uncensored
extreme observations in order to verify that the weak censoring assumption
required by the proposed estimator is satisfied.

\subsection{Estimation of the proportion $p$ of non-censored extremes}

\noindent To justify the application of the MDPD estimator under weak
censoring, we estimate the proportion $p$ of uncensored extreme observations.
This quantity plays a central role, as it determines whether a sufficient
number of extreme observations remain fully observed to support reliable tail
inference.\smallskip

\noindent The adaptive algorithm of Reiss and Thomas is applied to select the
optimal number of upper order statistics $k^{\ast}$ used for estimating $p$.
This data-driven procedure allows us to balance stability and sensitivity in
the tail region, while accounting for the presence of censoring.

\subsection{Selection of the number of upper order statistics}

\noindent The number of upper order statistics $k$ is selected using the
adaptive algorithm of Reiss and Thomas \cite{ReTo7}, which aims at stabilizing
tail index estimation based on uncensored extremes. Specifically, the optimal
value $k^{\ast}$ is defined as%
\[
k^{\ast}:=\arg\min_{1<k<n}\frac{1}{k}\sum_{i=1}^{k}i^{\theta}\left\vert
\widehat{\zeta}_{i}-\text{median}(\widehat{\zeta}_{1},\dots,\widehat{\zeta
}_{k})\right\vert ,\text{ }0\leq\theta\leq0.5,
\]
where $\widehat{\zeta}_{i}$ denotes an estimator of the tail parameter $\eta$
computed from the $i$ largest order statistics.\smallskip

\noindent Throughout this work, we fix $\theta=0.3.$ This choice is supported
by an extensive empirical calibration based on a preliminary study of $\theta$
over the interval $[0,$ $0.5],$ which systematically examined the sensitivity
of the procedure to the tuning parameter and showed that $\theta=0.3$
consistently provides a favorable compromise in terms of bias and mean squared
error across the considered scenarios. Smaller values of $\theta$ tend to
oversmooth the variability among the upper order statistics, thereby masking
relevant tail information, whereas larger values overemphasize extreme
fluctuations, resulting in increased variance and reduced numerical
stability.$\smallskip$

\noindent Thus, fixing $\theta=0.3$ offers a stable and numerically
well-justified balance between robustness and sensitivity to extremes. This
value was found to perform reliably across different tail indices, censoring
levels, and contamination regimes. The Reiss--Thomas selection procedure is
applied uniformly throughout the simulation study, thereby ensuring a fair
comparison between the different estimators. Further details on this selection
rule can be found in \cite{NevesAlves04}.$\smallskip$\ 

\noindent Applying this procedure to the insurance loss data yields an optimal
value $k^{\ast}=51$ for estimating $p.$ The resulting estimate $\widehat{p}%
=0.76,$ confirms that the weak censoring condition $(p>1/2)$ holds in this
dataset, validating the applicability of the proposed MDPD-based
approach.$\smallskip$

\noindent\textit{Note}. In this real-data analysis, we do not introduce
artificial contamination. The MDPD estimator is already robust both from a
theoretical perspective and as confirmed by the simulation results. Moreover,
there is no clear empirical evidence suggesting contamination in the insurance
data, and artificially modifying the observations could distort the underlying
loss distribution. The dataset is therefore analyzed as observed, reflecting
realistic insurance losses, including naturally occurring extreme values.

\subsection{Tail index estimation using MDPD}

\noindent Having verified the weak censoring assumption and determined an
appropriate threshold, we now proceed to tail index estimation. The same
Reiss--Thomas algorithm is employed to select the number of upper order
statistics for tail index estimation, leading to $k^{\ast}=73,$ This value is
kept fixed across all robustness settings to facilitate comparison.$\smallskip
$

\noindent The MDPD estimator is computed for four robustness parameter
values:
\[
\alpha\in\{0.01,0.1,0.3,0.5\},
\]
allowing us to assess the impact of robustness on tail index estimation in a
realistic insurance context.

\subsection{Contamination scenarios}

\noindent For real data, we report the analysis on the observed dataset
without artificial contamination.

\begin{itemize}
\item The original, observed data are treated as the reference
scenario.\newline\textit{Note.} In contrast to simulations, contamination is
not applied to real data, because:

\item The MDPD estimator is theoretically robust and performs well under
heavy-tailed observations.

\item There is no confirmed contamination in the insurance dataset, and
applying artificial extreme values could distort the estimation.

\item Weak censoring $(p>1/2)$ is satisfied, allowing meaningful application
of the estimator.
\end{itemize}

\noindent By focusing on the actual observed losses, the effect of the
robustness parameter $\alpha$ can still be assessed, showing how the estimator
adapts to potentially extreme claims in the dataset.

\subsection{Estimated tail indices}

\noindent The estimated tail indices for different values of $\alpha$ are
summarized in Table~\ref{Tab1}. Increasing $\alpha$ provides robustness
against extreme values that may naturally occur in the dataset without
artificially contaminating the data.

\begin{center}%
\begin{table}[tbp] \centering
\begin{tabular}
[c]{c|c}%
$\alpha$ & $\widehat{\gamma}_{1,\alpha}$\\\hline
$0.01$ & $0.745$\\\hline
$0.1$ & $0.773$\\\hline
$0.3$ & $0.820$\\\hline
$0.5$ & $0.845$\\\hline
\multicolumn{2}{c}{}%
\end{tabular}
\caption{MDPD estimates of the tail index $\gamma_1$ for insurance loss data}\label{Tab1}%
\end{table}%

\end{center}

\subsection{Discussion and key takeaways}

\begin{itemize}
\item Small $\alpha$ values $(0.01-0.1)$ produce estimates close to classical
estimators, maintaining high efficiency for the bulk of the data, while larger
$\alpha$ $(0.3-0.5)$ slightly overestimate the tail index but improve
robustness against potential outliers.

\item Larger $\alpha$ values $(0.3-0.5)$ slightly increase the estimated tail
index, providing additional protection against extreme claims, while
preserving interpretability.
\end{itemize}

\noindent Overall, the results align with the simulation study in Section
$\ref{sec4}:$ the MDPD estimator remains stable under heavy-tailed
observations and weak censoring, without the need for artificial
contamination. This illustrates that the method is both theoretically sound
and practically applicable to real-world insurance data, offering a reliable
tool for extreme value analysis in actuarial contexts.\medskip

\noindent In conclusion, Section $\ref{sec5}$ highlights that the MDPD
approach can be applied directly to real datasets, capturing extreme events
effectively while retaining robustness, efficiency, and interpretability.

\subsection{Australian AIDS survival data (strong censoring)}

\noindent The Australian AIDS survival dataset contains information on
patients diagnosed with AIDS in Australia prior to July 1, 1991. The data were
provided by Dr. P. J. Solomon and the Australian National Centre in HIV
Epidemiology and Clinical Research. We focus on $2754$ male patients out of a
total of $2843,$ consistent with prior studies such as \cite{RS-94} and
\cite{VR-02} (pages 379-385), \cite{EnFG08}, \cite{Ndao14} and \cite{S-16}%
.\medskip

\noindent The proportion of uncensored observations, denoted $p,$ is estimated
using the Reiss--Thomas adaptive algorithm. The optimal number of upper order
statistics for estimating $p$ is $k^{\ast}=162,$ yielding an estimated
$\widehat{p}=0.29.$ This indicates strong right censoring, with less than
one-third of the observations being uncensored.\medskip

\noindent Under such strong censoring, the theoretical assumptions for the
MDPD tail index estimator $(p>1/2)$ are not satisfied. Consequently, the
estimator's performance may degrade, showing increased variability and
potential bias. Attempts to compute the tail index using the
Kaplan--Meier-based Worms--Worms estimator ($\widehat{\gamma}_{1}%
^{(\mathrm{W})}$) or the Nelson--Aalen-based MDPD estimator with $\alpha=0$
($\gamma_{1}^{(\mathrm{MNS})}$) either fail to converge or produce unreliable
results. Only the adapted Hill-type estimator ($\widehat{\gamma}%
_{1}^{(\mathrm{EFG})}$) provides estimates; however, it lacks robustness,
inheriting the classical Hill estimator's sensitivity to extreme
observations.\medskip

\noindent Nevertheless, this dataset provides a valuable case for exploring
the limits of the MDPD approach. It illustrates the challenges of estimating
tail indices under strong censoring and highlights the need for methods
capable of handling situations where the proportion of uncensored extremes is
below $50\%$.\medskip

\noindent Overall, the real data analyses complement the simulation study and
provide additional insight into the behavior of tail index estimators under censoring:

\begin{itemize}
\item In the insurance dataset, which satisfies the weak censoring condition
$(p>1/2),$ the proposed MDPD estimator performs according to theoretical
expectations, exhibiting clear robustness gains when contamination affects the
latent losses before censoring. The additional experiment applying
contamination after censoring shows that such a mechanism does not correspond
to a meaningful perturbation of the target distribution and may distort
inference, justifying its illustrative-only use.

\item In contrast, the Australian AIDS survival dataset represents a strong
censoring scenario $(p<1/2),$ where the assumptions underlying the MDPD
estimator are violated. In this setting, most estimators designed for censored
extremes fail to provide stable or reliable estimates. Even the adapted
Hill-type estimator, although computable, remains non-robust due to its
sensitivity to extreme observations.
\end{itemize}

\noindent These findings delineate the practical scope of the proposed
methodology. The MDPD estimator provides an effective and robust solution for
tail index estimation under weak censoring, which encompasses many
applications in insurance and reliability analysis. At the same time, the
strong censoring case, exemplified by the AIDS dataset, highlights open
challenges and motivates future research on methodological extensions capable
of handling extreme value inference beyond the $p>1/2$ regime.

\section{\textbf{Conclusion}}

\noindent In this work, we addressed the problem of robust estimation of the
extreme value index under random right censoring, a framework that naturally
arises in survival analysis, reliability engineering, and insurance
applications. While a variety of estimators have been proposed in the
literature---including Hill-type procedures and methods based on Kaplan--Meier
or Nelson--Aalen estimators---many of them remain sensitive to outliers or
rely on restrictive modeling assumptions, particularly in the presence of
contamination.\medskip

\noindent To overcome these limitations, we introduced a new tail index
estimator based on the Minimum Density Power Divergence (MDPD), a criterion
well known for its robustness properties in uncensored settings. We carefully
extended the MDPD methodology to the random right-censoring framework,
explicitly accounting for the incomplete nature of the observed data. To the
best of our knowledge, this work constitutes the first application of the MDPD
framework to tail index estimation under random censoring, thereby filling an
important gap in the robust extreme value literature.\medskip

\noindent Under standard regular variation assumptions and within the weak
censoring regime, we established the consistency and asymptotic normality of
the proposed estimator. A comprehensive simulation study confirmed its strong
finite-sample performance, highlighting a favorable robustness--efficiency
trade-off, especially under model misspecification and contamination.\medskip

\noindent The practical relevance of the approach was illustrated using an
insurance loss dataset satisfying the weak censoring condition $(p>1/2).$ In
this setting, the estimator behaves in accordance with the theoretical results
and exhibits clear robustness gains when contamination affects the latent
losses prior to censoring. In contrast, the analysis of the Australian AIDS
survival dataset, characterized by strong censoring $(p\leq1/2),$ underscores
the intrinsic difficulties of tail inference in such regimes, where most
existing estimators fail to deliver stable or reliable results. This empirical
evidence also confirms that contamination acting after censoring does not
correspond to a meaningful perturbation of the latent distribution, and
therefore provides limited insight for robustness assessment in
reconstruction-based approaches.\medskip

\noindent Overall, this contribution opens new perspectives for the
development of robust inference tools in censored extreme value models.
Promising directions for future research include adaptive threshold selection,
extensions to dependent or covariate-dependent censoring mechanisms, and the
design of robust estimators capable of handling severe censoring scenarios
where the proportion of uncensored extremes is small $(p\leq1/2).$

\section*{\textbf{Declarations}}

\begin{itemize}
\item The authors have no findings to declare.

\item The authors have no relevant financial or non-financial interests to disclose.

\item The authors have no competing interests, that are relevant to the
content of this article, to declare.

\item All authors certify that they have no affiliations with or involvement
in any organization or entity with any financial interest or non-financial
interest in the subject matter or materials discussed in this manuscript.

\item The authors have no financial or proprietary interests in any material
discussed in this article.
\end{itemize}

\section{\textbf{Acknowledgements}}

\noindent The authors would like to thank the anonymous reviewers for their
careful reading of the manuscript and their constructive comments, which
helped improve the clarity and quality of this work. We also acknowledge
helpful discussions with colleagues that contributed to the development of the
theoretical and numerical aspects of the paper. Any remaining errors are the
sole responsibility of the authors.

\section{\textbf{Proofs\label{sec6}}}

\subsection{Proof of Theorem \ref{TH1} (Consistency)}

To establish the existence and consistency of the estimator $\widehat{\gamma
}_{1,\alpha},$ we adapt the strategy of Theorem 1 in \cite{DGG13}, itself
based on Theorem 5.1 in Chapter 6 of \cite{LC98}. This framework provides a
general and rigorous approach for proving existence and consistency of
solutions to likelihood-type estimating equations, which we now apply to the
MDPD objective function associated with tail index estimation under random
right censoring.\smallskip

\noindent Let $L_{k,\alpha}^{\ast}(\gamma_{1})$ denote the empirical version
of the density power divergence objective function $L_{u,\alpha}^{\ast}%
(\gamma_{1})$ defined in (\ref{L}), namely
\[
L_{k,\alpha}^{\ast}(\gamma_{1})=\int_{1}^{\infty}\ell_{\gamma_{1}}^{1+\alpha
}(x)\,dx-\left(  1+\frac{1}{\alpha}\right)  \sum_{i=1}^{k}a_{ik}\ell
_{\gamma_{1}}^{\alpha}\left(  \frac{Z_{n-i+1:n}}{Z_{n-k:n}}\right)  ,\text{
}\alpha>0.
\]
The proof is divided into two main steps.\smallskip

\noindent\textbf{Step 1: Local domination of the objective function.}%
\smallskip

\noindent Let $\gamma_{1}^{\ast}$ denote the true tail index of the underlying
distribution $F.$ Fix an arbitrary $\epsilon>0$ such that $0<\epsilon
<\gamma_{1}^{\ast},$ and define the closed neighborhood
\[
I_{\epsilon}:=[\gamma_{1}^{\ast}-\epsilon,\gamma_{1}^{\ast}+\epsilon].
\]
Consider the random set
\[
S_{k}(\epsilon):=\{\gamma_{1}\in I_{\epsilon}:L_{k,\alpha}^{\ast}(\gamma
_{1}^{\ast})<L_{k,\alpha}^{\ast}(\gamma_{1})\}.
\]
We show that, with probability tending to one, $\gamma_{1}^{\ast}$ is the
unique and strict local minimizer of $L_{k,\alpha}^{\ast}$ on $I_{\epsilon}.$

\noindent For $m=1,2,3,$ define
\begin{equation}
\pi_{k}^{(m)}(\gamma_{1}^{\ast})=\int_{1}^{\infty}\Psi_{\alpha+1,\gamma
_{1}^{\ast}}^{(m)}(x)dx-\left(  1+\frac{1}{\alpha}\right)  A_{k,\alpha
,\gamma_{1}^{\ast}}^{(m)}, \label{pim}%
\end{equation}
where
\begin{equation}
\Psi_{\alpha,\gamma_{1}^{\ast}}^{(m)}(x):=\left.  \frac{d^{m}}{d\gamma_{1}%
^{m}}\ell_{\gamma_{1}}^{\alpha}(x)\right\vert _{\gamma_{1}=\gamma_{1}^{\ast}%
},\quad A_{k,\alpha,\gamma_{1}^{\ast}}^{(m)}:=\sum_{i=1}^{k}a_{ik}\Psi
_{\alpha,\gamma_{1}^{\ast}}^{(m)}\left(  \frac{Z_{n-i+1:n}}{Z_{n-k:n}}\right)
. \label{psim-Am}%
\end{equation}
Applying a third-order Taylor expansion of $\gamma_{1}\mapsto L_{k,\alpha
}^{\ast}(\gamma_{1})$ around $\gamma_{1}^{\ast}$ gives, for $\gamma_{1}\in
I_{\epsilon},$
\[
L_{k,\alpha}^{\ast}(\gamma_{1})-L_{k,\alpha}^{\ast}(\gamma_{1}^{\ast}%
)=S_{1,k}+S_{2,k}+S_{3,k},
\]
where
\[
S_{1,k}=\pi_{k}^{(1)}(\gamma_{1}^{\ast})(\gamma_{1}-\gamma_{1}^{\ast}),\quad
S_{2,k}=\frac{1}{2}\pi_{k}^{(2)}(\gamma_{1}^{\ast})(\gamma_{1}-\gamma
_{1}^{\ast})^{2},\quad S_{3,k}=\frac{1}{6}\pi_{k}^{(3)}(\gamma_{1}%
)(\widetilde{\gamma}_{1}-\gamma_{1}^{\ast})^{3},
\]
with $\widetilde{\gamma}_{1}$ lying between $\gamma_{1}$ and $\gamma_{1}%
^{\ast}.$\smallskip

\noindent By Lemma $\ref{lemma1},$ $\pi_{k}^{(1)}(\gamma_{1}^{\ast
})\rightarrow0$ in probability. Hence, for fixed $\epsilon>0$ and sufficiently
large $n,$
\[
|S_{1,k}|<\epsilon^{3},\text{ uniformly for }\gamma_{1}\in I_{\epsilon},
\]
with probability tending to one.\smallskip

\noindent By Lemma $\ref{lemma2},$ $\pi_{k}^{(2)}(\gamma_{1}^{\ast
})\rightarrow\eta^{\ast}>0$ in probability. Therefore, for all $\gamma_{1}\in
I_{\epsilon},$
\[
S_{2,k}=\left(  1+o_{\mathbf{P}}(1)\right)  \frac{\eta^{\ast}}{2}\left(
\gamma_{1}-\gamma_{1}^{\ast}\right)  ^{2}>c\epsilon^{2},
\]
for some constant $c>0.$\smallskip

\noindent Finally, by Lemma $\ref{lemma3},$ since $\widetilde{\gamma}_{1}\in
I_{\epsilon},$ we have $\pi_{k}^{(3)}(\widetilde{\gamma}_{1})=O_{\mathbf{P}%
}(1)$ uniformly over $I_{\epsilon}.$ Thus, there exists $d>0$ such that
$\left\vert S_{3,k}\right\vert \leq d\epsilon^{3},$ with probability tending
to one.\smallskip

\noindent Collecting the bounds gives
\[
\min_{\gamma_{1}\in I_{\epsilon}}\left(  S_{1,k}+S_{2,k}+S_{3,k}\right)  \geq
c\epsilon^{2}-\left(  d+1\right)  \epsilon^{3}.
\]
Choosing $\epsilon$ sufficiently small, namely $0<\epsilon<c/(d+1),$ ensures
the right-hand side is strictly positive, yielding
\[
\mathbf{P}_{\gamma_{1}^{\ast}}\{S_{k}(\epsilon)\}\rightarrow1,\text{ as
}n\rightarrow\infty.
\]
This argument allows the construction of a sequence $\epsilon_{n}\downarrow0$
such that
\[
\mathbf{P}_{\gamma_{1}^{\ast}}\{S_{k}(\epsilon_{n})\}\rightarrow1,\text{ as
}n\rightarrow\infty.
\]
The existence of such a sequence follows from the convergence of $\pi
_{k}^{(m)}$ and the uniform control of the remainder term $S_{3,k};$ as $n$
increases, $\epsilon_{n}$ can be chosen smaller while preserving the strict
local minimality of $\gamma_{1}^{\ast}$ with high probability.\smallskip

\noindent\textbf{Step 1 bis: Continuity and compactness.}\smallskip

\noindent For a fixed $n,$ consider the random function
\[
\gamma_{1}\mapsto S_{1,k}(\gamma_{1})+S_{2,k}(\gamma_{1})+S_{3,k}(\gamma
_{1}),
\]
defined on the deterministic interval $I_{\epsilon}=[\gamma_{1}^{\ast
}-\epsilon,\gamma_{1}^{\ast}+\epsilon].$ Each $S_{j,k}$ is continuous in
$\gamma_{1}$ for every realization of the data; hence, their sum is almost
surely continuous. Since $I_{\epsilon}$ is compact, the Weierstrass theorem
guarantees that the infimum of this function over $I_{\epsilon}$ is almost
surely attained.\smallskip

\noindent\textbf{Step 2: Existence of a solution and consistency.}\smallskip

\noindent The function $\gamma_{1}\mapsto L_{k,\alpha}^{\ast}(\gamma_{1})$ is
continuously differentiable on $(0,\infty).$ For any $\gamma_{1}\in
S_{k}(\epsilon_{n}),$ there exists a point $\widehat{\gamma}_{1,\alpha
}(\epsilon_{n})\in I_{\epsilon_{n}}$ at which $L_{k,\alpha}^{\ast}(\gamma
_{1})$ attains a local minimum, implying
\[
\pi_{k}^{(1)}(\widehat{\gamma}_{1,\alpha}(\epsilon_{n}))=0.
\]
Define $\widehat{\gamma}_{1,\alpha}=\widehat{\gamma}_{1,\alpha}(\epsilon_{n})$
on $S_{k}(\epsilon_{n})$ and arbitrarily otherwise. Then
\[
\mathbf{P}_{\gamma_{1}^{\ast}}\{\pi_{k}^{(1)}(\widehat{\gamma}_{1,\alpha
})=0\}\geq\mathbf{P}_{\gamma_{1}^{\ast}}\{S_{k}(\epsilon_{n})\}\rightarrow1.
\]
Thus, with probability tending to one, there exists a sequence of solutions to
the estimating equation $\left(  \ref{estim-equa}\right)  .$ Finally, for any
fixed $\epsilon>0$ and sufficiently large $n,$
\[
\mathbf{P}_{\gamma_{1}^{\ast}}\{|\widehat{\gamma}_{1,\alpha}-\gamma_{1}^{\ast
}|<\epsilon\}\geq\mathbf{P}_{\gamma_{1}^{\ast}}\{|\widehat{\gamma}_{1,\alpha
}-\gamma_{1}^{\ast}|<\epsilon_{n}\}\rightarrow1,
\]
which establishes the consistency of the estimator $\widehat{\gamma}%
_{1,\alpha}.$

\subsection{Proof of Theorem \ref{TH2} (Asymptotic normality)}

\noindent We now establish the asymptotic normality of the estimator
$\widehat{\gamma}_{1,\alpha}.$\textbf{\smallskip}

\noindent Building on the consistency result proved in Theorem $\ref{TH1},$
the objective is to characterize the second-order stochastic fluctuations of
the estimator around the true tail index $\gamma_{1}^{\ast}.$%
\textbf{\smallskip}

\noindent The proof proceeds by linearizing the estimating equation around the
true parameter and by combining this linearization with a Gaussian
approximation of the Nelson--Aalen tail process.\textbf{\smallskip}

\noindent For the reader's convenience, the argument is decomposed into
several steps, each highlighting a specific component of the asymptotic
behavior.\medskip

\noindent\textbf{Step 1: Taylor expansion around the true parameter\medskip}

\noindent Since $\widehat{\gamma}_{1,\alpha}$ is a solution of equation
$\left(  \ref{estim-equa}\right)  ,$ it follows that $\pi_{k}^{\left(
1\right)  }\left(  \widehat{\gamma}_{1,\alpha}\right)  =0.$\textbf{\medskip}

\noindent This identity constitutes the cornerstone of the asymptotic
analysis, allowing us to express the estimation error $\widehat{\gamma
}_{1,\alpha}-\gamma_{1}^{\ast}$ in terms of the score function evaluated at
the true parameter.\textbf{\medskip}

\noindent Applying Taylor's expansion to the function $\widehat{\gamma
}_{1,\alpha}\rightarrow\pi_{k}^{\left(  1\right)  }\left(  \widehat{\gamma
}_{1,\alpha}\right)  $ around $\gamma_{1}^{\ast},$ yields:
\[
0=\pi_{k}^{\left(  1\right)  }\left(  \widehat{\gamma}_{1,\alpha}\right)
=\pi_{k}^{\left(  1\right)  }\left(  \gamma_{1}^{\ast}\right)  +\pi
_{k}^{\left(  2\right)  }\left(  \gamma_{1}^{\ast}\right)  \left(
\widehat{\gamma}_{1,\alpha}-\gamma_{1}^{\ast}\right)  +\frac{1}{2}\pi
_{k}^{\left(  3\right)  }\left(  \widetilde{\gamma}_{1,\alpha}\right)  \left(
\widehat{\gamma}_{1,\alpha}-\gamma_{1}^{\ast}\right)  ^{2},
\]
where $\widetilde{\gamma}_{1,\alpha}$ lies between $\gamma_{1}^{\ast}$ and
$\widehat{\gamma}_{1,\alpha}.$\textbf{\medskip}

\noindent At this point, the expansion clearly separates a leading linear term
from a higher-order remainder.\textbf{\medskip}

\noindent The consistency of $\widehat{\gamma}_{1,\alpha}$ implies that
$\widetilde{\gamma}_{1,\alpha}\rightarrow\gamma_{1}^{\ast}$ in probability.
Consequently, by Lemma $\ref{lemma3}$ we have $\pi_{k}^{\left(  3\right)
}\left(  \widetilde{\gamma}_{1,\alpha}\right)  =O_{\mathbf{P}}\left(
1\right)  .$\textbf{\medskip}

\noindent It follows that
\[
2^{-1}\pi_{k}^{\left(  3\right)  }\left(  \widetilde{\gamma}_{1,\alpha
}\right)  \left(  \widehat{\gamma}_{1,\alpha}-\gamma_{1}^{\ast}\right)
^{2}=o_{\mathbf{P}}\left(  1\right)  \left(  \widehat{\gamma}_{1,\alpha
}-\gamma_{1}^{\ast}\right)  ,
\]
showing that the quadratic remainder term is asymptotically negligible
relative to the linear term. \textbf{\medskip}

\noindent As consequence, we obtain
\[
\pi_{k}^{\left(  2\right)  }\left(  \gamma_{1}^{\ast}\right)  \sqrt{k}\left(
\widehat{\gamma}_{1,\alpha}-\gamma_{1}^{\ast}\right)  \left(  1+o_{\mathbf{P}%
}\left(  1\right)  \right)  =-\sqrt{k}\pi_{k}^{\left(  1\right)  }\left(
\gamma_{1}^{\ast}\right)  ,\text{ as }n\rightarrow\infty.
\]
From Lemma $\ref{lemma2},$ we know that $\pi_{k}^{\left(  2\right)  }\left(
\gamma_{1}^{\ast}\right)  \overset{\mathbf{P}}{\rightarrow}\eta^{\ast},$ where
$\eta^{\ast}$ is defined in $\left(  \ref{eta-start}\right)  .$ Replacing
$\pi_{k}^{\left(  2\right)  }\left(  \gamma_{1}^{\ast}\right)  $ by its limit
therefore yields
\begin{equation}
\eta^{\ast}\sqrt{k}\left(  \widehat{\gamma}_{1,\alpha}-\gamma_{1}^{\ast
}\right)  \left(  1+o_{\mathbf{P}}\left(  1\right)  \right)  =-\sqrt{k}\pi
_{k}^{\left(  1\right)  }\left(  \gamma_{1}^{\ast}\right)  ,\text{ as
}n\rightarrow\infty. \label{norm}%
\end{equation}
This key relation shows that the asymptotic distribution of $\widehat{\gamma
}_{1,\alpha}$ is entirely driven by that of the normalized score function
$\sqrt{k}\pi_{k}^{\left(  1\right)  }\left(  \gamma_{1}^{\ast}\right)
.$\textbf{\medskip}

\noindent\textbf{Step 2: Gaussian decomposition of the Nelson--Aalen tail
process\medskip}

\noindent We now focus on the asymptotic behavior of the score term appearing
in $\left(  \ref{norm}\right)  .$\textbf{\medskip}

\noindent Lemma $\ref{lemma1},$ states that
\[
-\left(  1+\frac{1}{\alpha}\right)  ^{-1}\pi_{k}^{\left(  1\right)  }\left(
\gamma_{1}^{\ast}\right)  =\int_{1}^{\infty}\phi_{\alpha,\gamma_{1}^{\ast}%
}\left(  x\right)  \left\{  \frac{\overline{F}_{n}^{\left(  NA\right)
}\left(  Z_{n-k:n}x\right)  }{\overline{F}_{n}^{\left(  NA\right)  }\left(
Z_{n-k:n}\right)  }-x^{-1/\gamma_{1}^{\ast}}\right\}  dx,
\]
where
\begin{equation}
\phi_{\alpha,\gamma_{1}^{\ast}}\left(  x\right)  :=\frac{\alpha}{\gamma
_{1}^{\ast\alpha+3}}\frac{\gamma_{1}^{\ast}+\alpha\gamma_{1}^{\ast}%
+\alpha\gamma_{1}^{\ast2}-\alpha\left(  1+\gamma_{1}^{\ast}\right)  \log
x}{x^{\left(  \alpha+\gamma_{1}^{\ast}+\alpha\gamma_{1}^{\ast}\right)
/\gamma_{1}^{\ast}}}.\label{phi}%
\end{equation}
This representation expresses the score as a linear functional of the
Nelson--Aalen tail estimator.\textbf{\medskip}

\noindent Define the Nelson--Aalen tail product-limit process:%
\begin{equation}
D_{k}\left(  x\right)  :=\sqrt{k}\left\{  \frac{\overline{F}_{n}^{\left(
NA\right)  }\left(  Z_{n-k:n}x\right)  }{\overline{F}_{n}^{\left(  NA\right)
}\left(  Z_{n-k:n}\right)  }-x^{-1/\gamma_{1}^{\ast}}\right\}  ,\text{ for
}x>1. \label{D}%
\end{equation}
The asymptotic behavior of $D_{k}\left(  x\right)  $ is the key ingredient for
deriving the limit distribution.\textbf{\medskip}

\noindent\textbf{Step 2a: Gaussian approximation of }$D_{k}\left(  x\right)
$\textbf{\medskip}

\noindent The key tool is the Gaussian approximation of the Nelson--Aalen tail
process.\textbf{\medskip}

\noindent Recently, \cite{MNS2025} established such an approximation, which we
now adopt.\textbf{\medskip}

\noindent Assume that the assumptions of Theorem $\ref{TH2}$ hold, with
$\sqrt{k}A_{1}\left(  h\right)  =O\left(  1\right)  ,$ where $h=h_{n}%
:=U_{H}\left(  n/k\right)  .$\textbf{\medskip}

\noindent Recall that $U_{H}$ denotes the tail quantile function of $H,$ and
that $A_{1}\left(  h\right)  $ appears in the second-order regular variation
condition $\left(  \ref{second-order}\right)  .$\textbf{\medskip}

\noindent We also set $q:=1-p,$ where $p$ is defined in $\left(
\ref{p}\right)  $ as the proportion of upper non-censored observations.
\textbf{\medskip}

\noindent Then, there exists a sequence of standard Wiener processes $\left\{
W_{n}\left(  s\right)  ;\text{ }0\leq s\leq1\right\}  $ defined on the common
probability space $\left(  \Omega,\mathcal{A},\mathbf{P}\right)  ,$ such that,
for $p>1/2$ and every $0<\epsilon<1/2,$%
\begin{equation}
\sup_{x\geq p^{\gamma}}x^{\epsilon/p\gamma_{1}}\left\vert D_{k}\left(
x\right)  -J_{n}\left(  x\right)  -x^{-1/\gamma_{1}}\dfrac{x^{\tau_{1}%
/\gamma_{1}}-1}{\tau_{1}\gamma_{1}}\sqrt{k}A_{1}\left(  h\right)  \right\vert
\overset{\mathbf{p}}{\rightarrow}0,\text{ as }n\rightarrow\infty
,\label{appoxi}%
\end{equation}
where
\begin{equation}
J_{n}\left(  x\right)  =J_{n1}\left(  x\right)  +J_{n2}\left(  x\right)
,\label{Jn}%
\end{equation}
with%
\[
J_{1n}\left(  x\right)  :=\sqrt{\frac{n}{k}}\left\{  x^{1/\gamma_{2}%
}\mathbf{W}_{n,1}\left(  \frac{k}{n}x^{-1/\gamma}\right)  -x^{-1/\gamma_{1}%
}\mathbf{W}_{n,1}\left(  \frac{k}{n}\right)  \right\}  ,
\]
and
\[
J_{2n}\left(  x\right)  :=\dfrac{x^{-1/\gamma_{1}}}{\gamma}\sqrt{\dfrac{n}{k}}%
{\displaystyle\int_{1}^{x}}
u^{1/\gamma-1}\left\{  p\mathbf{W}_{n,2}\left(  \dfrac{k}{n}u^{-1/\gamma
}\right)  -q\mathbf{W}_{n,1}\left(  \dfrac{k}{n}u^{-1/\gamma}\right)
\right\}  du.
\]
For each $n,$ $\mathbf{W}_{n,1}\left(  s\right)  $ and $\mathbf{W}%
_{n,2}\left(  s\right)  $ are independent centred Gaussian processes defined
by:%
\[
\mathbf{W}_{n,1}\left(  s\right)  :=\left\{  W_{n}\left(  \theta\right)
-W_{n}\left(  \theta-ps\right)  \right\}  \mathbb{I}_{\left\{  \theta
-ps\geq0\right\}  }\text{ }%
\]
and
\[
\mathbf{W}_{n,2}\left(  s\right)  :=W_{n}\left(  1\right)  -W_{n}\left(
1-qs\right)  ,\text{ for }0\leq s\leq1.
\]
This provides a precise stochastic decomposition of $D_{k}\left(  x\right)  $
into a Gaussian component, a deterministic bias term, and a negligible
remainder.\textbf{\medskip}

\noindent\textbf{Step 3: Decomposition of \ }$\sqrt{k}\pi_{k}^{\left(
1\right)  }\left(  \gamma_{1}^{\ast}\right)  $ \textbf{and asymptotic
representation\medskip}

\noindent Relying on Step 2a, we can write
\[
-\left(  1+\frac{1}{\alpha}\right)  ^{-1}\sqrt{k}\pi_{k}^{\left(  1\right)
}\left(  \gamma_{1}^{\ast}\right)  =\int_{1}^{\infty}\phi_{\alpha,\gamma
_{1}^{\ast}}\left(  x\right)  D_{k}\left(  x\right)  dx:=T_{n}.
\]
Using the Gaussian approximation of $D_{k}\left(  x\right)  ,$ the term
$T_{n}$ naturally decomposes as \textbf{ }%
\[
T_{n}=\underbrace{\int_{1}^{\infty}J_{n}\left(  x\right)  \phi_{\alpha
,\gamma_{1}^{\ast}}\left(  x\right)  dx}_{\text{Gaussian term}}%
+\underbrace{\sqrt{k}A_{1}\left(  h\right)  \mu}_{\text{Asymptotic bias}%
}+\underbrace{R_{n}}_{\text{remainder}},
\]
where%
\[
\mu:=\int_{1}^{\infty}x^{-1/\gamma_{1}^{\ast}}\frac{x^{\tau_{1}/\gamma
_{1}^{\ast}}-1}{\gamma_{1}^{\ast}\tau_{1}}\phi_{\alpha,\gamma_{1}^{\ast}%
}\left(  x\right)  dx,
\]
and
\[
R_{n}:=\int_{1}^{\infty}\left\{  D_{k}\left(  x\right)  -J_{n}\left(
x\right)  -x^{-1/\gamma_{1}^{\ast}}\frac{x^{\tau_{1}/\gamma_{1}^{\ast}}%
-1}{\gamma_{1}^{\ast}\tau_{1}}\sqrt{k}A_{1}\left(  h\right)  \right\}
\phi_{\alpha,\gamma_{1}^{\ast}}\left(  x\right)  dx.
\]
The first term is Gaussian, since it is linear functional of the centred
Gaussian processes $\mathbf{W}_{n,1}\left(  s\right)  $ and $\mathbf{W}%
_{n,2}\left(  s\right)  .$ By linearity of the integral, this mapping is
linear, and hence Gaussian. Moreover, under the integrability assumptions on
$\phi_{\alpha,\gamma_{1}^{\ast}}\left(  x\right)  $ and the covariance
structure of $J_{n}\left(  x\right)  ,$ the variance is finite and given
explicitly in Proposition $\ref{prop2}.$ The second term captures the
asymptotic bias, while the third term is shown to be negligible.\smallskip\ 

\noindent Fix $0<\epsilon<1$ and write%
\[
R_{n}=\int_{1}^{\infty}\left\{  x^{-\epsilon/p\gamma_{1}}\phi_{\alpha
,\gamma_{1}^{\ast}}\left(  x\right)  \right\}  x^{\epsilon/p\gamma_{1}%
}\left\{  D_{k}\left(  x\right)  -J_{n}\left(  x\right)  -x^{-1/\gamma
_{1}^{\ast}}\frac{x^{\tau_{1}/\gamma_{1}^{\ast}}-1}{\gamma_{1}^{\ast}\tau_{1}%
}\sqrt{k}A_{1}\left(  h\right)  \right\}  dx.
\]
It is straightforward to verify that $\int_{1}^{\infty}x^{-\epsilon
/p\gamma_{1}}\left\vert \phi_{\alpha,\gamma_{1}^{\ast}}\left(  x\right)
\right\vert dx$ is bounded by a finite linear combination of integrals of the
form $\int_{1}^{\infty}x^{-d}\left(  \log x\right)  ^{m}dx,$ $m=0,1,$ for some
$d>0.$ Hence by approximation $\left(  \ref{appoxi}\right)  ,$\textbf{ }%
$R_{n}=o_{\mathbf{P}}\left(  1\right)  ,$\textbf{ }as $n\rightarrow\infty.$ By
elementary calculations, we obtain%
\begin{equation}%
\begin{array}
[c]{ccl}%
\mu & := &
{\displaystyle\int_{1}^{\infty}}
x^{-1/\gamma_{1}^{\ast}}\dfrac{x^{\tau_{1}/\gamma_{1}^{\ast}}-1}{\gamma
_{1}^{\ast}\tau_{1}}\phi_{\alpha,\gamma_{1}^{\ast}}\left(  x\right)  dx\\
& = & \dfrac{\alpha}{\tau_{1}\gamma_{1}^{\alpha+2}}\dfrac{\tau_{1}-1}%
{\alpha-\tau_{1}+\alpha\gamma_{1}+1}+\dfrac{\tau_{1}\gamma_{1}^{2}\left(
2\alpha-\tau_{1}+2\alpha\gamma_{1}+2\right)  }{\left(  \alpha+\alpha\gamma
_{1}+1\right)  ^{2}\left(  \alpha-\tau_{1}+\alpha\gamma_{1}+1\right)  ^{2}}.
\end{array}
\label{mu}%
\end{equation}
Recalling that\textbf{ }$\sqrt{k}A_{1}\left(  h\right)  \rightarrow\lambda<0,$
and using $\left(  \ref{norm}\right)  ,$ we finally get
\[
\left(  1+\frac{1}{\alpha}\right)  ^{-1}\eta^{\ast}\sqrt{k}\left(
\widehat{\gamma}_{1,\alpha}-\gamma_{1}^{\ast}\right)  =\int_{1}^{\infty}%
J_{n}\left(  x\right)  \phi_{\alpha,\gamma_{1}^{\ast}}\left(  x\right)
dx+\lambda\mu+o_{\mathbf{P}}\left(  1\right)  .
\]
Since $J_{n}\left(  x\right)  $ is centred, the integral term is centered as
well. Therefore%
\[
\left(  1+\frac{1}{\alpha}\right)  ^{-1}\eta^{\ast}\sqrt{k}\left(
\widehat{\gamma}_{1,\alpha}-\gamma_{1}^{\ast}\right)  \overset{\mathcal{D}%
}{\rightarrow}\mathcal{N}\left(  \mu,\sigma^{2}\right)  ,\text{ as
}n\rightarrow\infty,
\]
where $\sigma^{2}$ is given in Proposition $\ref{prop2}.$

\subsection{Conclusion}

\noindent Combining Steps 1--3, we have shown that the properly normalized
estimator $\widehat{\gamma}_{1,\alpha}$ decomposes into a centered Gaussian
term, an explicit asymptotic bias, and a negligible remainder.\smallskip

\noindent This completes the proof of Theorem $\ref{TH2}.$

\section{\textbf{Appendix A: Instrumental results}}

\begin{lemma}
\textbf{\label{lemma1}}Under the assumptions of Theorem $\ref{TH1},$ we have
\[
-\left(  1+\frac{1}{\alpha}\right)  ^{-1}\pi_{k}^{\left(  1\right)  }\left(
\gamma_{1}^{\ast}\right)  =\int_{1}^{\infty}\phi_{\alpha,\gamma_{1}^{\ast}%
}\left(  x\right)  \left\{  \frac{\overline{F}_{n}^{\left(  NA\right)
}\left(  Z_{n-k:n}x\right)  }{\overline{F}_{n}^{\left(  NA\right)  }\left(
Z_{n-k:n}\right)  }-x^{-1/\gamma_{1}^{\ast}}\right\}  dx,
\]
where $\phi_{\alpha,\gamma_{1}^{\ast}}\left(  x\right)  $ is as in $\left(
\ref{phi}\right)  .$ Moreover $\pi_{k}^{\left(  1\right)  }\left(  \gamma
_{1}^{\ast}\right)  \overset{\mathbf{P}}{\rightarrow}0,$ as $n\rightarrow
\infty.$
\end{lemma}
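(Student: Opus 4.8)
The plan is to prove the two assertions separately: first the exact (finite-$n$, distribution-free) integral representation of $\pi_{k}^{\left(1\right)}\left(\gamma_{1}^{\ast}\right)$, and then the convergence $\pi_{k}^{\left(1\right)}\left(\gamma_{1}^{\ast}\right)\overset{\mathbf{P}}{\rightarrow}0$. For the representation, the starting point is to recognize, from $\left(\ref{ratio}\right)$ and $\left(\ref{aik}\right)$, that the weights $a_{ik}$ are exactly the jumps of the normalized Nelson--Aalen survival ratio. Writing $W_{n}\left(x\right):=\overline{F}_{n}^{\left(NA\right)}\left(Z_{n-k:n}x\right)/\overline{F}_{n}^{\left(NA\right)}\left(Z_{n-k:n}\right)$ for $x\geq1$, the decreasing step function $x\mapsto W_{n}\left(x\right)$ has downward jump of size $a_{ik}$ at $x=Z_{n-i+1:n}/Z_{n-k:n}$, $i=1,\dots,k$, with $W_{n}\left(1\right)=1$, so that
\[
A_{k,\alpha,\gamma_{1}^{\ast}}^{\left(1\right)}=\sum_{i=1}^{k}a_{ik}\Psi_{\alpha,\gamma_{1}^{\ast}}^{\left(1\right)}\left(\frac{Z_{n-i+1:n}}{Z_{n-k:n}}\right)=\int_{1}^{\infty}\Psi_{\alpha,\gamma_{1}^{\ast}}^{\left(1\right)}\left(x\right)\,d\left(-W_{n}\left(x\right)\right).
\]
First I would integrate by parts. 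Since $\Psi_{\alpha,\gamma_{1}^{\ast}}^{\left(1\right)}\left(x\right)=\alpha\gamma_{1}^{\ast-\left(2+\alpha\right)}\left(\log x-\gamma_{1}^{\ast}\right)x^{-\alpha\left(1+1/\gamma_{1}^{\ast}\right)}$ decays to $0$ and $W_{n}$ is bounded, the boundary contribution at infinity vanishes while at $x=1$ it equals $\Psi_{\alpha,\gamma_{1}^{\ast}}^{\left(1\right)}\left(1\right)$, giving $A_{k,\alpha,\gamma_{1}^{\ast}}^{\left(1\right)}=\Psi_{\alpha,\gamma_{1}^{\ast}}^{\left(1\right)}\left(1\right)+\int_{1}^{\infty}W_{n}\left(x\right)\frac{d}{dx}\Psi_{\alpha,\gamma_{1}^{\ast}}^{\left(1\right)}\left(x\right)\,dx$. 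A direct differentiation then shows that $\frac{d}{dx}\Psi_{\alpha,\gamma_{1}^{\ast}}^{\left(1\right)}\left(x\right)=\phi_{\alpha,\gamma_{1}^{\ast}}\left(x\right)$, with $\phi_{\alpha,\gamma_{1}^{\ast}}$ as in $\left(\ref{phi}\right)$; this is the step that fixes the precise form of the weight function.

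Substituting this expression for $A_{k,\alpha,\gamma_{1}^{\ast}}^{\left(1\right)}$ into the definition $\left(\ref{pi-m}\right)$ of $\pi_{k}^{\left(1\right)}\left(\gamma_{1}^{\ast}\right)$ and dividing by $-\left(1+1/\alpha\right)$ leaves the random part $\int_{1}^{\infty}\phi_{\alpha,\gamma_{1}^{\ast}}\left(x\right)W_{n}\left(x\right)\,dx$ together with purely deterministic terms. It then remains to verify the deterministic identity
\[
\int_{1}^{\infty}\phi_{\alpha,\gamma_{1}^{\ast}}\left(x\right)x^{-1/\gamma_{1}^{\ast}}\,dx=\left(1+\frac{1}{\alpha}\right)^{-1}\int_{1}^{\infty}\Psi_{\alpha+1,\gamma_{1}^{\ast}}^{\left(1\right)}\left(x\right)\,dx-\Psi_{\alpha,\gamma_{1}^{\ast}}^{\left(1\right)}\left(1\right),
\]
which is exactly the statement that replacing $W_{n}$ by its Pareto limit $x^{-1/\gamma_{1}^{\ast}}$ annihilates $\pi_{k}^{\left(1\right)}$ (the Fisher consistency of the MDPD score). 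I would establish it by one further integration by parts: since $\phi_{\alpha,\gamma_{1}^{\ast}}=\frac{d}{dx}\Psi_{\alpha,\gamma_{1}^{\ast}}^{\left(1\right)}$ and $\frac{d}{dx}x^{-1/\gamma_{1}^{\ast}}=-\ell_{\gamma_{1}^{\ast}}\left(x\right)$, the left-hand side equals $-\Psi_{\alpha,\gamma_{1}^{\ast}}^{\left(1\right)}\left(1\right)+\int_{1}^{\infty}\Psi_{\alpha,\gamma_{1}^{\ast}}^{\left(1\right)}\left(x\right)\ell_{\gamma_{1}^{\ast}}\left(x\right)\,dx$, and the residual claim $\int_{1}^{\infty}\Psi_{\alpha,\gamma_{1}^{\ast}}^{\left(1\right)}\ell_{\gamma_{1}^{\ast}}\,dx=\left(1+1/\alpha\right)^{-1}\int_{1}^{\infty}\Psi_{\alpha+1,\gamma_{1}^{\ast}}^{\left(1\right)}\,dx$ reduces, using the already-computed $\int_{1}^{\infty}\frac{d}{d\gamma_{1}}\ell_{\gamma_{1}}^{1+\alpha}\,dx$, to the elementary evaluations of $\int_{1}^{\infty}x^{-s}\,dx$ and $\int_{1}^{\infty}x^{-s}\log x\,dx$. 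This completes the representation.

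For the convergence, I would use the just-derived identity to write $\pi_{k}^{\left(1\right)}\left(\gamma_{1}^{\ast}\right)=-\left(1+1/\alpha\right)k^{-1/2}\int_{1}^{\infty}\phi_{\alpha,\gamma_{1}^{\ast}}\left(x\right)D_{k}\left(x\right)\,dx$, where $D_{k}$ is the Nelson--Aalen tail product-limit process $\left(\ref{D}\right)$. Under the assumptions of Theorem $\ref{TH2}$ and $p>1/2$, the weighted Gaussian approximation of $D_{k}$ due to \cite{MNS2025} (cf. Proposition $\ref{prop1}$) applies, and $\phi_{\alpha,\gamma_{1}^{\ast}}$ is integrable against this approximation — indeed $\int_{1}^{\infty}x^{-\epsilon/\left(p\gamma_{1}^{\ast}\right)}\left\vert\phi_{\alpha,\gamma_{1}^{\ast}}\left(x\right)\right\vert\,dx<\infty$ for small $\epsilon>0$, being bounded by a finite combination of integrals $\int_{1}^{\infty}x^{-d}\left(\log x\right)^{m}\,dx$ — so that $\int_{1}^{\infty}\phi_{\alpha,\gamma_{1}^{\ast}}\left(x\right)D_{k}\left(x\right)\,dx=O_{\mathbf{P}}\left(1\right)$. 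Multiplying by $k^{-1/2}$ then yields $\pi_{k}^{\left(1\right)}\left(\gamma_{1}^{\ast}\right)=o_{\mathbf{P}}\left(1\right)$. The main obstacle is precisely this last step: controlling the integral of $\phi_{\alpha,\gamma_{1}^{\ast}}$ against $D_{k}$ over the unbounded domain $\left(1,\infty\right)$, where the decay of the weight must be matched against the tail growth of the product-limit process; this is where $p>1/2$ is essential, securing both the integrability of the weight in the relevant exponential scale and the finiteness of the limiting variance.
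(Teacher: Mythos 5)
Your proof is correct, and its first half --- the exact integral representation --- is essentially the paper's own argument: write $A_{k,\alpha,\gamma_{1}^{\ast}}^{\left(  1\right)  }$ as a Stieltjes integral of $\Psi_{\gamma_{1}^{\ast},\alpha}^{\left(  1\right)  }$ against the normalized Nelson--Aalen tail ratio, integrate by parts (boundary term $\Psi_{\gamma_{1}^{\ast},\alpha}^{\left(  1\right)  }\left(  1\right)  =-\alpha/\gamma_{1}^{\ast1+\alpha}$, decay at infinity), identify $\phi_{\alpha,\gamma_{1}^{\ast}}=d\Psi_{\gamma_{1}^{\ast},\alpha}^{\left(  1\right)  }/dx$ as in $\left(  \ref{phi}\right)  ,$ and cancel the deterministic part through the Fisher-consistency identity, which is the paper's Proposition $\ref{prop1}\left(  i\right)$ and which you legitimately re-derive by elementary closed-form integration instead of citing. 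Where you genuinely deviate is the convergence step. The paper splits $\overline{F}_{n}^{\left(  NA\right)  }\left(  Z_{n-k:n}x\right)  /\overline{F}_{n}^{\left(  NA\right)  }\left(  Z_{n-k:n}\right)  -x^{-1/\gamma_{1}^{\ast}}$ into $I_{1n}$ (deviation from the true ratio $\overline{F}\left(  Z_{n-k:n}x\right)  /\overline{F}\left(  Z_{n-k:n}\right)  ,$ controlled by assertion $\left(  2.29\right)$ of \cite{MNS2025}) plus $I_{2n}$ (deviation of the true ratio from the Pareto limit, controlled by Potter's inequality), while you write $\pi_{k}^{\left(  1\right)  }\left(  \gamma_{1}^{\ast}\right)  =-\left(  1+1/\alpha\right)  k^{-1/2}\int_{1}^{\infty}\phi_{\alpha,\gamma_{1}^{\ast}}\left(  x\right)  D_{k}\left(  x\right)  dx$ and invoke the single weighted Gaussian approximation $\left(  \ref{TP-C1}\right)$ of the process $\left(  \ref{D}\right)  ,$ which already absorbs the regular-variation bias via the term $x^{-1/\gamma_{1}}\left(  x^{\tau_{1}/\gamma_{1}}-1\right)  \sqrt{k}A_{1}\left(  h\right)  /\left(  \tau_{1}\gamma_{1}\right)  ;$ boundedness of $\sqrt{k}A_{1}\left(  h\right)  ,$ the weighted integrability of $\phi_{\alpha,\gamma_{1}^{\ast}},$ and the $O_{\mathbf{P}}\left(  1\right)$ Gaussian integral then give $\int_{1}^{\infty}\phi_{\alpha,\gamma_{1}^{\ast}}D_{k}\,dx=O_{\mathbf{P}}\left(  1\right)  ,$ and the explicit factor $k^{-1/2}$ finishes. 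Your route is cleaner --- it avoids the separate Potter argument and reuses exactly the machinery the paper deploys anyway for $T_{n}$ in the proof of Theorem $\ref{TH2}$ --- and costs nothing extra, since the lemma already assumes the second-order hypotheses of that theorem (note that the approximation you need is the paper's Proposition $\ref{prop3},$ not Proposition $\ref{prop1};$ the paper itself confuses these labels). Two small points: the claim $\int_{1}^{\infty}\phi_{\alpha,\gamma_{1}^{\ast}}\left(  x\right)  J_{n}\left(  x\right)  dx=O_{\mathbf{P}}\left(  1\right)$ deserves the explicit expectation bound $\mathbf{E}\left\vert J_{n}\left(  x\right)  \right\vert =O\left(  1\right)$ uniformly in $x,$ which the paper derives from the Wiener-process representation and which is the second place where $p>1/2$ enters; and your Stieltjes identification of the jumps of the ratio with $a_{ik}$ tacitly uses the paper's differential convention $dF_{n}^{\left(  NA\right)  }\left(  x\right)  =\overline{F}_{n}^{\left(  NA\right)  }\left(  x\right)  dH_{n}^{\left(  1\right)  }\left(  x\right)  /\overline{H}_{n}\left(  x^{-}\right)  $ rather than the literal jumps of the exponential product --- the paper makes the same identification, so this is a shared convention, not an error on your part.
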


\begin{proof}
We work under the conditions of Theorem $\ref{TH1},$ which assume that the
cdfs $F$ and $G$ exhibit Pareto-type tail behavior with positive tail indices
$\gamma_{1}$ and $\gamma_{2},$ that the usual assumptions on the intermediate
sequence $k_{n}$ hold, and that the proportion satisfies $p>1/2.\smallskip$

Using the notations introduced in $\left(  \ref{pim}\right)  $ and $\left(
\ref{psim-Am}\right)  ,$ we can write
\begin{equation}
-\pi_{k}^{\left(  1\right)  }\left(  \gamma_{1}^{\ast}\right)  =\left(
1+\dfrac{1}{\alpha}\right)  A_{k,\alpha,\gamma_{1}^{\ast}}^{\left(  1\right)
}-\int_{1}^{\infty}\Psi_{\gamma_{1}^{\ast},\alpha+1}^{\left(  1\right)
}\left(  x\right)  dx,\label{f1}%
\end{equation}
where
\[
A_{k,\alpha,\gamma_{1}^{\ast}}^{\left(  1\right)  }=\sum_{i=1}^{k}\frac
{\delta_{\left[  n-i+1:n\right]  }}{i}\dfrac{\overline{F}_{n}^{\left(
NA\right)  }\left(  Z_{n-i+1:n}\right)  }{\overline{F}_{n}^{\left(  NA\right)
}\left(  Z_{n-k:n}\right)  }\Psi_{\gamma_{1}^{\ast},\alpha}^{\left(  1\right)
}\left(  \frac{Z_{n-i+1:n}}{Z_{n-k:n}}\right)  ,
\]
and $F_{n}^{\left(  NA\right)  }$ denotes the Nelson-Aalen product-limit
estimator of the cdf $F,$ as defined in $\left(  \ref{Nelson}\right)  .$
Furthermore,
\[
\Psi_{\gamma_{1}^{\ast},\alpha}^{\left(  1\right)  }\left(  x\right)  =\left.
\frac{d\ell_{\gamma_{1}}^{\alpha}\left(  x\right)  }{d\gamma_{1}}\right\vert
_{\gamma=\gamma_{1}^{\ast}}=\frac{\alpha}{\gamma_{1}^{\ast2+\alpha}}\frac{\log
x-\gamma_{1}^{\ast}}{x^{\alpha\left(  1+1/\gamma_{1}^{\ast}\right)  }}.
\]
Applying the representation $\left(  \ref{dFn}\right)  ,$ we may equivalently
write:
\[
A_{k,\alpha,\gamma_{1}^{\ast}}^{\left(  1\right)  }=\int_{1}^{\infty}%
\Psi_{\gamma_{1}^{\ast},\alpha}^{\left(  1\right)  }\left(  x\right)
d\frac{F_{n}^{\left(  NA\right)  }\left(  Z_{n-k:n}x\right)  }{\overline
{F}_{n}^{\left(  NA\right)  }\left(  Z_{n-k:n}\right)  }.
\]
This may be rewritten as%
\begin{equation}
A_{k,\alpha,\gamma_{1}^{\ast}}^{\left(  1\right)  }=-\int_{1}^{\infty}%
\phi_{\alpha,\gamma_{1}^{\ast}}\left(  x\right)  \frac{\overline{F}%
_{n}^{\left(  NA\right)  }\left(  Z_{n-k:n}x\right)  }{\overline{F}%
_{n}^{\left(  NA\right)  }\left(  Z_{n-k:n}\right)  }dx,\label{f2}%
\end{equation}
where $\phi_{\alpha,\gamma_{1}^{\ast}}\left(  x\right)  :=d\Psi_{\gamma
_{1}^{\ast},\alpha}^{\left(  1\right)  }\left(  x\right)  /dx.$ On the other
hand, by assertion $\left(  i\right)  $ of Proposition $\ref{prop1},$ we have%
\[%
{\displaystyle\int_{1}^{\infty}}
\Psi_{\gamma_{1}^{\ast},\alpha+1}^{\left(  1\right)  }\left(  x\right)
dx=-\left(  1+\dfrac{1}{\alpha}\right)
{\displaystyle\int_{1}^{\infty}}
\Psi_{\gamma_{1}^{\ast},\alpha}^{\left(  1\right)  }\left(  x\right)
dx^{-1/\gamma_{1}^{\ast}}.
\]
Applying integration by parts to the second integral gives
\[%
{\displaystyle\int_{1}^{\infty}}
\Psi_{\gamma_{1}^{\ast},\alpha}^{\left(  1\right)  }\left(  x\right)
dx^{-1/\gamma_{1}^{\ast}}=-%
{\displaystyle\int_{1}^{\infty}}
\phi_{\alpha,\gamma_{1}^{\ast}}\left(  x\right)  x^{-1/\gamma_{1}^{\ast}}dx.
\]
Therefore
\begin{equation}%
{\displaystyle\int_{1}^{\infty}}
\Psi_{\gamma_{1}^{\ast},\alpha+1}^{\left(  1\right)  }\left(  x\right)
dx=\left(  1+\dfrac{1}{\alpha}\right)
{\displaystyle\int_{1}^{\infty}}
\phi_{\alpha,\gamma_{1}^{\ast}}\left(  x\right)  x^{-1/\gamma_{1}^{\ast}%
}dx.\label{f3}%
\end{equation}
Combining $\left(  \ref{f1}\right)  ,$ $\left(  \ref{f2}\right)  $ and
$\left(  \ref{f3}\right)  ,$ we obtain%
\[
-\left(  1+\frac{1}{\alpha}\right)  ^{-1}\pi_{k}^{\left(  1\right)  }\left(
\gamma_{1}^{\ast}\right)  =\int_{1}^{\infty}\phi_{\alpha,\gamma_{1}^{\ast}%
}\left(  x\right)  \left\{  \frac{\overline{F}_{n}^{\left(  NA\right)
}\left(  Z_{n-k:n}x\right)  }{\overline{F}_{n}^{\left(  NA\right)  }\left(
Z_{n-k:n}\right)  }-x^{-1/\gamma_{1}^{\ast}}\right\}  dx.
\]
We decompose this integral as%
\[
I_{1n}:=\int_{1}^{\infty}\phi_{\alpha,\gamma_{1}^{\ast}}\left(  x\right)
\left\{  \frac{\overline{F}_{n}^{\left(  NA\right)  }\left(  Z_{n-k:n}%
x\right)  }{\overline{F}_{n}^{\left(  NA\right)  }\left(  Z_{n-k:n}\right)
}-\frac{\overline{F}\left(  Z_{n-k:n}x\right)  }{\overline{F}\left(
Z_{n-k:n}\right)  }\right\}  dx,
\]
and%
\[
I_{2n}:=\int_{1}^{\infty}\phi_{\alpha,\gamma_{1}^{\ast}}\left(  x\right)
\left\{  \frac{\overline{F}\left(  Z_{n-k:n}x\right)  }{\overline{F}\left(
Z_{n-k:n}\right)  }-x^{-1/\gamma_{1}^{\ast}}\right\}  dx.
\]
For $I_{1n},$ using assertion $\left(  6.29\right)  $ of \cite{MNS2025}, there
exists a sequence of Wiener processes $\left\{  W_{n}\left(  s\right)  ,\text{
}0\leq s\leq1\right\}  ,$ such that, for sufficiently small $\eta,\epsilon
_{0}>0,$%
\[
\sqrt{k}\left\{  \frac{\overline{F}_{n}^{\left(  NA\right)  }\left(
Z_{n-k:n}x\right)  }{\overline{F}_{n}^{\left(  NA\right)  }\left(
Z_{n-k:n}\right)  }-\frac{\overline{F}\left(  Z_{n-k:n}x\right)  }%
{\overline{F}\left(  Z_{n-k:n}\right)  }\right\}  =J_{n}\left(  x\right)
+o_{\mathbf{P}}\left(  1\right)  x^{\left(  2\eta-p\right)  /\gamma
+\epsilon_{0}},
\]
uniformly for $x\geq1,$ where $J_{n}\left(  x\right)  $ is a centered Gaussian
process as defined: in $\left(  \ref{Jn}\right)  .$ Consequently,%
\[
I_{1n}=o_{\mathbf{P}}\left(  1\right)  \int_{1}^{\infty}J_{n}\left(  x\right)
\phi_{\alpha,\gamma_{1}^{\ast}}\left(  x\right)  dx+o_{\mathbf{P}}\left(
1\right)  \int_{1}^{\infty}x^{\left(  2\eta-p\right)  /\gamma+\epsilon_{0}%
}\phi_{\alpha,\gamma_{1}^{\ast}}\left(  x\right)  dx.
\]
Since $\gamma_{1}<\gamma_{2},$ we have $p>1/2$ and thus $\left(
2\eta-p\right)  /\gamma+\epsilon_{0}<0.$ The second integral is therefore
finite, as it reduces to a linear combination of integrals of the form
\[
\int_{1}^{\infty}x^{-d}\left(  \log x\right)  ^{j}dx,\text{ }j=0,1,
\]
for some $d>0.$ This implies that the second term in $I_{1n}$ is
$o_{\mathbf{P}}\left(  1\right)  .\medskip$

\noindent Next, we show that the expectation of the stochastic integral is
also finite. Observe that%
\begin{equation}
\mathbf{E}\left\vert \int_{1}^{\infty}J_{n}\left(  x\right)  \phi
_{\alpha,\gamma_{1}^{\ast}}\left(  x\right)  dx\right\vert <\int_{1}^{\infty
}\mathbf{E}\left\vert J_{n}\left(  x\right)  \right\vert \left\vert
\phi_{\alpha,\gamma_{1}^{\ast}}\left(  x\right)  \right\vert dx. \label{exp}%
\end{equation}
Since $\left\{  W_{n}\left(  s\right)  ,\text{ }0\leq s\leq1\right\}  $ is a
sequence of Wiener processes, it follows that%
\[
\sqrt{\dfrac{n}{k}}\mathbf{E}\left\vert \mathbf{W}_{n,1}\left(  \dfrac{k}%
{n}s\right)  \right\vert \leq\left(  ps\right)  ^{1/2}\text{ and }\sqrt
{\dfrac{n}{k}}\mathbf{E}\left\vert \mathbf{W}_{n,2}\left(  \dfrac{k}%
{n}s\right)  \right\vert \leq\left(  qs\right)  ^{1/2},
\]
where $q:=1-p.$ It follows that
\[
\mathbf{E}\left\vert J_{n}\left(  x\right)  \right\vert =O\left(  x^{1/\left(
2\gamma_{2}\right)  -1/\left(  2\gamma_{1}\right)  }\right)  ,
\]
uniformly for $x>1.$ Since $\gamma_{1}<\gamma_{2},$ we have $1/\left(
2\gamma_{2}\right)  -1/\left(  2\gamma_{1}\right)  <0,$ and therefore
\[
\mathbf{E}\left\vert J_{n}\left(  x\right)  \right\vert =O\left(  1\right)  ,
\]
uniformly for $x>1.$ On the other hand, we already established that
\[
\int_{1}^{\infty}\left\vert \phi_{\alpha,\gamma_{1}^{\ast}}\left(  x\right)
\right\vert dx<\infty,
\]
implying that the expectation in inequality $\left(  \ref{exp}\right)  $ is
finite. Hence, we conclude that $I_{1n}=o_{\mathbf{P}}\left(  1\right)  .$ For
$I_{2n},$ recall that%
\[
\left\vert I_{2n}\right\vert \leq\int_{1}^{\infty}\left\vert \frac
{\overline{F}\left(  Z_{n-k:n}x\right)  }{\overline{F}\left(  Z_{n-k:n}%
\right)  }-x^{-1/\gamma_{1}^{\ast}}\right\vert \left\vert \phi_{\alpha
,\gamma_{1}^{\ast}}\left(  x\right)  \right\vert dx.
\]
By assumption $\left(  \ref{RVbis}\right)  ,$ the tail of $F$ satisfies
\[
\overline{F}\left(  x\right)  =x^{-1/\gamma_{1}^{\ast}}\ell_{1}\left(
x\right)  ,
\]
where $\ell_{1}\left(  x\right)  $ is a slowly varying function. Then, we can
write%
\[
\frac{\overline{F}\left(  Z_{n-k:n}x\right)  }{\overline{F}\left(
Z_{n-k:n}\right)  }-x^{-1/\gamma_{1}^{\ast}}=x^{-1/\gamma_{1}^{\ast}}\left\{
\frac{\ell_{1}\left(  Z_{n-k:n}x\right)  }{\ell_{1}\left(  Z_{n-k:n}\right)
}-1\right\}  ,
\]
Applying Proposition B. 1.10 in \cite{deHF06} (p. 369), for sufficiently small
$\epsilon>0$, we have uniformly for large $Z_{n-k:n},$
\[
\left\vert \frac{\ell_{1}\left(  Z_{n-k:n}x\right)  }{\ell_{1}\left(
Z_{n-k:n}\right)  }-1\right\vert \leq\epsilon x^{\epsilon},\text{ }x>1.
\]
and thus%
\[
\left\vert \frac{\overline{F}\left(  Z_{n-k:n}x\right)  }{\overline{F}\left(
Z_{n-k:n}\right)  }-x^{-1/\gamma_{1}^{\ast}}\right\vert \leq\epsilon
x^{-1/\gamma_{1}^{\ast}+\epsilon}.
\]
Consequently%
\[
\left\vert I_{2n}\right\vert \leq\epsilon\int_{1}^{\infty}x^{-1/\gamma
_{1}^{\ast}+\epsilon}\left\vert \phi_{\alpha,\gamma_{1}^{\ast}}\left(
x\right)  \right\vert dx.
\]
Since $\phi_{\alpha,\gamma_{1}^{\ast}}\left(  x\right)  $ decays sufficiently
fast and, for $\epsilon>0,$ can be chosen arbitrarily small, the above
integral is finite. Therefore,
\[
I_{2n}=o_{\mathbf{P}}\left(  1\right)  .
\]
Combining the two results, we conclude that%
\[
\pi_{k}^{\left(  1\right)  }\left(  \gamma_{1}^{\ast}\right)  =o_{\mathbf{P}%
}\left(  1\right)  ,
\]
which completes the proof.
\end{proof}

\begin{lemma}
\textbf{\label{lemma2}}Under the assumptions of Lemma $\ref{lemma1},$ we have
\begin{equation}
\pi_{k}^{\left(  2\right)  }\left(  \gamma_{1}^{\ast}\right)
\overset{\mathbf{P}}{\rightarrow}\eta^{\ast},\text{ as }n\rightarrow\infty,
\label{app2}%
\end{equation}
where%
\begin{equation}
\eta^{\ast}:=\frac{1+\alpha}{\gamma_{1}^{\ast2+\alpha}}\frac{\alpha^{2}\left(
1+\gamma_{1}^{\ast}\right)  ^{2}+1}{\left(  \alpha\left(  1+\gamma_{1}^{\ast
}\right)  +1\right)  ^{3}}. \label{eta-start}%
\end{equation}

\end{lemma}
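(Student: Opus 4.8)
The plan is to reduce the convergence of $\pi_{k}^{\left(  2\right)  }\left(  \gamma_{1}^{\ast}\right)  $ to that of its sole random component $A_{k,\alpha,\gamma_{1}^{\ast}}^{\left(  2\right)  },$ since by $\left(  \ref{pi-m}\right)  $ we have $\pi_{k}^{\left(  2\right)  }\left(  \gamma_{1}^{\ast}\right)  =\int_{1}^{\infty}\Psi_{\alpha+1,\gamma_{1}^{\ast}}^{\left(  2\right)  }\left(  x\right)  dx-\left(  1+1/\alpha\right)  A_{k,\alpha,\gamma_{1}^{\ast}}^{\left(  2\right)  },$ where the first term is a deterministic constant. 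First I would establish that
\[
A_{k,\alpha,\gamma_{1}^{\ast}}^{\left(  2\right)  }\overset{\mathbf{P}}{\rightarrow}\int_{1}^{\infty}\Psi_{\alpha,\gamma_{1}^{\ast}}^{\left(  2\right)  }\left(  x\right)  \ell_{\gamma_{1}^{\ast}}\left(  x\right)  dx,\qquad n\rightarrow\infty,
\]
that is, the random weighted sum converges to the integral of $\Psi_{\alpha,\gamma_{1}^{\ast}}^{\left(  2\right)  }$ against the limiting standard-Pareto law $d\left(  1-x^{-1/\gamma_{1}^{\ast}}\right)  =\ell_{\gamma_{1}^{\ast}}\left(  x\right)  dx.$ Granting this, the deterministic limit of $\pi_{k}^{\left(  2\right)  }\left(  \gamma_{1}^{\ast}\right)  $ is $\int_{1}^{\infty}\Psi_{\alpha+1,\gamma_{1}^{\ast}}^{\left(  2\right)  }\left(  x\right)  dx-\left(  1+1/\alpha\right)  \int_{1}^{\infty}\Psi_{\alpha,\gamma_{1}^{\ast}}^{\left(  2\right)  }\left(  x\right)  \ell_{\gamma_{1}^{\ast}}\left(  x\right)  dx,$ and the task reduces to verifying that this equals the closed form $\eta^{\ast}$ in $\left(  \ref{eta-start}\right)  .$

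For the convergence of $A_{k,\alpha,\gamma_{1}^{\ast}}^{\left(  2\right)  },$ I would proceed exactly as in the treatment of $I_{1n}$ and $I_{2n}$ in Lemma $\ref{lemma1}.$ Using the ratio identity $\left(  \ref{ratio}\right)  $ together with $\left(  \ref{aik}\right)  ,$ the sum rewrites as the Stieltjes integral of $\Psi_{\alpha,\gamma_{1}^{\ast}}^{\left(  2\right)  }$ against the Nelson--Aalen tail-ratio measure $d\overline{F}_{n}^{\left(  NA\right)  }\left(  Z_{n-k:n}x\right)  /\overline{F}_{n}^{\left(  NA\right)  }\left(  Z_{n-k:n}\right)  .$ Integrating by parts (the boundary terms cancel, since $\Psi_{\alpha,\gamma_{1}^{\ast}}^{\left(  2\right)  }\left(  \infty\right)  =0$ and both tail ratios equal $1$ at $x=1$), I would compare the empirical tail ratio with $x^{-1/\gamma_{1}^{\ast}}$ and split the resulting difference into a stochastic part, handled by the uniform Gaussian approximation $\left(  2.29\right)  $ of \cite{MNS2025} with its centred process $J_{n},$ and a deterministic bias part, handled by Potter's inequality applied to $\overline{F}.$ The condition $p>1/2$ (equivalently $\gamma_{1}<\gamma_{2}$) again guarantees $\mathbf{E}\left\vert J_{n}\left(  x\right)  \right\vert =O\left(  1\right)  $ uniformly in $x>1;$ since $\Psi_{\alpha,\gamma_{1}^{\ast}}^{\left(  2\right)  }\left(  x\right)  $ and its total-variation measure $d\Psi_{\alpha,\gamma_{1}^{\ast}}^{\left(  2\right)  }\left(  x\right)  $ decay like $x^{-\alpha\left(  1+1/\gamma_{1}^{\ast}\right)  }$ times a power of $\log x,$ every integral that arises is dominated by a finite linear combination of $\int_{1}^{\infty}x^{-d}\left(  \log x\right)  ^{m}dx$ with $d>1,$ and dominated convergence yields the claimed limit.

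It then remains to evaluate the two deterministic integrals. Differentiating $\ell_{\gamma_{1}}^{\alpha}\left(  x\right)  =\gamma_{1}^{-\alpha}x^{-\alpha\left(  1+1/\gamma_{1}\right)  }$ twice in $\gamma_{1}$ gives $\Psi_{\alpha,\gamma_{1}^{\ast}}^{\left(  2\right)  }\left(  x\right)  =\ell_{\gamma_{1}^{\ast}}^{\alpha}\left(  x\right)  Q\left(  \log x\right)  $ for an explicit quadratic $Q,$ so that both integrands are a power of $x$ times a polynomial in $\log x;$ using $\int_{1}^{\infty}x^{-d}\left(  \log x\right)  ^{m}dx=m!/\left(  d-1\right)  ^{m+1}$ reduces everything to algebra, and the simplification to $\eta^{\ast}$ in $\left(  \ref{eta-start}\right)  $ is routine but lengthy. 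One may cross-check the outcome by confirming $\eta^{\ast}>0,$ which is precisely the positivity invoked in the consistency argument. The main obstacle is the convergence step: carefully justifying the interchange of limit and integral and the uniform control of the error through the $J_{n}$-approximation and Potter bounds; by contrast, the closed-form evaluation is purely mechanical.
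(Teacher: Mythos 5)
Your proposal is correct and follows essentially the same route as the paper: both arguments reduce the claim to showing that the random term $A_{k,\alpha,\gamma_{1}^{\ast}}^{\left(2\right)}$ converges to the Pareto-weighted integral, by rewriting the sum as a Stieltjes integral against the Nelson--Aalen tail ratio, integrating by parts, and controlling the difference with $x^{-1/\gamma_{1}^{\ast}}$ through the Gaussian approximation $J_{n}$ of \cite{MNS2025}, Potter's inequality, and the integrability of the kernel $\phi_{\alpha,\gamma_{1}^{\ast}}^{\left(2\right)}$. The only (cosmetic) difference is in the final algebra: the paper identifies the limiting constant via the identity of Proposition \ref{prop1}$\left(ii\right)$, taken from Proposition 8.1 of \cite{MNM2025}, which yields $\eta^{\ast}=\left(1+\alpha\right)\int_{1}^{\infty}\bigl(\Psi_{\gamma_{1}^{\ast},1}^{\left(1\right)}\left(x\right)\bigr)^{2}\ell_{\gamma_{1}^{\ast}}^{\alpha-1}\left(x\right)dx>0$ and hence positivity for free, whereas you evaluate the two deterministic integrals directly by elementary calculus and check positivity afterwards.
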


\begin{proof}
In view of assertion $\left(  ii\right)  $ of Proposition $\ref{prop1},$ we
have the identity:%
\[
\int_{1}^{\infty}\Psi_{\gamma_{1}^{\ast},\alpha+1}^{\left(  2\right)  }\left(
x\right)  dx=-\left(  1+\frac{1}{\alpha}\right)  \int_{1}^{\infty}\Psi
_{\gamma_{1}^{\ast},\alpha}^{\left(  2\right)  }\left(  x\right)
dx^{-1/\gamma_{1}^{\ast}}+\eta^{\ast},
\]
which implies that%
\[
\left(  1+\frac{1}{\alpha}\right)  ^{-1}\left(  \pi_{k}^{\left(  2\right)
}\left(  \gamma_{1}^{\ast}\right)  -\eta^{\ast}\right)  =-\int_{1}^{\infty
}\Psi_{\gamma_{1}^{\ast},\alpha}^{\left(  2\right)  }\left(  x\right)
dx^{-1/\gamma_{0}}-A_{k,\alpha,\gamma_{1}^{\ast}}^{\left(  2\right)  }.
\]
Following the same line of reasoning as in Lemma $\ref{lemma1},$ we obtain
\[
-\left(  1+\frac{1}{\alpha}\right)  ^{-1}\left(  \pi_{k}^{\left(  2\right)
}\left(  \gamma_{1}^{\ast}\right)  -\eta^{\ast}\right)  =\int_{1}^{\infty}%
\phi_{\alpha,\gamma_{1}^{\ast}}^{\left(  2\right)  }\left(  x\right)  \left\{
\frac{\overline{F}_{n}^{\left(  NA\right)  }\left(  Z_{n-k:n}x\right)
}{\overline{F}_{n}^{\left(  NA\right)  }\left(  Z_{n-k:n}\right)
}-x^{-1/\gamma_{1}^{\ast}}\right\}  dx,
\]
where $\phi_{\alpha,\gamma_{1}^{\ast}}^{\left(  2\right)  }\left(  x\right)
:=d\Psi_{\gamma_{1}^{\ast},\alpha}^{\left(  2\right)  }\left(  x\right)
/dx.\medskip$

\noindent It is also straightforward to verify that
\[
\int_{1}^{\infty}x^{\epsilon}\left\vert \phi_{\alpha,\gamma_{1}^{\ast}%
}^{\left(  2\right)  }\left(  x\right)  \right\vert dx<\infty,
\]
for some $\epsilon>0.$ By applying Proposition $\ref{prop2},$ and the same
arguments as in Lemma $\ref{lemma1}$, we conclude that\textbf{ }%
\[
\pi_{k}^{\left(  2\right)  }\left(  \gamma_{1}^{\ast}\right)  -\eta^{\ast
}=o_{\mathbf{P}}\left(  1\right)  ,
\]
which establishes the result.
\end{proof}

\begin{lemma}
\textbf{\label{lemma3}}Given a sequence of rvs $\widetilde{\gamma}_{1}$ taking
values in a neighborhood of $\gamma_{1}^{\ast},$ that is, such that for some
$\epsilon>0,$ $\left\vert \widetilde{\gamma}_{1}-\gamma_{1}^{\ast}\right\vert
\leq\epsilon$ with probability tending to one, it holds that
\[
\pi_{k}^{\left(  3\right)  }\left(  \widetilde{\gamma}_{1}\right)
=O_{\mathbf{P}}\left(  1\right)  ,\text{ as }n\rightarrow\infty.
\]

\end{lemma}

\begin{proof}
First, we show that\textbf{ }$\pi_{k}^{\left(  3\right)  }\left(  \gamma
_{1}^{\ast}\right)  =O_{\mathbf{P}}\left(  1\right)  .$\textbf{ }Recall
that\textbf{ }%
\begin{align*}
\pi_{k}^{\left(  3\right)  }\left(  \gamma_{1}^{\ast}\right)   &  =\int%
_{1}^{\infty}\Psi_{\gamma_{1}^{\ast},\alpha+1}^{\left(  3\right)  }\left(
x\right)  dx\\
&  -\left(  1+1/\alpha\right)  \sum_{i=1}^{k}\frac{\delta_{\left[
n-i+1:n\right]  }}{i}\dfrac{\overline{F}_{n}^{\left(  NA\right)  }\left(
Z_{n-i+1:n}\right)  }{\overline{F}_{n}^{\left(  NA\right)  }\left(
Z_{n-k:n}\right)  }\Psi_{\gamma_{1}^{\ast},\alpha}^{\left(  3\right)  }\left(
\frac{Z_{n-i+1:n}}{Z_{n-k:n}}\right)  .
\end{align*}
By Proposition\textbf{ }$\ref{prop2},$\textbf{ }we can rewrite this as\textbf{
}%
\[
\pi_{k}^{\left(  3\right)  }\left(  \gamma_{1}^{\ast}\right)  =\left(
1+\frac{1}{\alpha}\right)  A_{k,\alpha,\gamma_{1}^{\ast}}^{\left(  3\right)
}+\zeta^{\ast},
\]
where\textbf{ }%
\[
A_{k,\alpha,\gamma_{1}^{\ast}}^{\left(  3\right)  }:=-\int_{1}^{\infty}%
\Psi_{\gamma_{1}^{\ast},\alpha}^{\left(  3\right)  }\left(  x\right)
dx^{-1/\gamma_{1}^{\ast}}-\sum_{i=1}^{k}\frac{\delta_{\left[  n-i+1:n\right]
}}{i}\dfrac{\overline{F}_{n}^{\left(  NA\right)  }\left(  Z_{n-i+1:n}\right)
}{\overline{F}_{n}^{\left(  NA\right)  }\left(  Z_{n-k:n}\right)  }%
\Psi_{\gamma_{1}^{\ast},\alpha}^{\left(  3\right)  }\left(  \frac{Z_{n-i+1:n}%
}{Z_{n-k:n}}\right)  .
\]
Using arguments similar to Lemma $\ref{lemma2},$ we can show that
$A_{k,\alpha,\gamma_{1}^{\ast}}^{\left(  3\right)  }\overset{\mathbf{P}%
}{\rightarrow}0.$ Hence $\pi_{k}^{\left(  3\right)  }\left(  \gamma_{1}^{\ast
}\right)  \overset{\mathbf{P}}{\rightarrow}\zeta^{\ast}<\infty,$ implying
$\pi_{k}^{\left(  3\right)  }\left(  \gamma_{1}^{\ast}\right)  =O_{\mathbf{P}%
}\left(  1\right)  .$ Next, we prove that $\pi_{k}^{\left(  3\right)  }\left(
\widetilde{\gamma}_{1}\right)  -\pi_{k}^{\left(  3\right)  }\left(  \gamma
_{1}^{\ast}\right)  =O_{\mathbf{P}}\left(  1\right)  ,$ as $n\rightarrow
\infty.$ Indeed, write
\begin{align*}
&  \pi_{k}^{\left(  3\right)  }\left(  \widetilde{\gamma}_{1}\right)  -\pi
_{k}^{\left(  3\right)  }\left(  \gamma_{1}^{\ast}\right)  \medskip\\
&  =\int_{1}^{\infty}\left\{  \Psi_{\widetilde{\gamma}_{1},\alpha+1}^{\left(
3\right)  }\left(  x\right)  -\Psi_{\gamma_{1}^{\ast},\alpha+1}^{\left(
3\right)  }\left(  x\right)  \right\}  dx\medskip\\
&  -\left(  1+1/\alpha\right)  \sum_{i=1}^{k}\frac{\delta_{\left[
n-i+1:n\right]  }}{i}\dfrac{\overline{F}_{n}^{\left(  NA\right)  }\left(
Z_{n-i+1:n}\right)  }{\overline{F}_{n}^{\left(  NA\right)  }\left(
Z_{n-k:n}\right)  }\\
&  \ \ \ \ \ \ \ \ \ \ \ \ \ \ \ \ \ \ \ \ \ \times\left\{  \Psi
_{\widetilde{\gamma}_{1},\alpha}^{\left(  3\right)  }\left(  \frac
{Z_{n-i+1:n}}{Z_{n-k:n}}\right)  -\Psi_{\gamma_{1}^{\ast},\alpha}^{\left(
3\right)  }\left(  \frac{Z_{n-i+1:n}}{Z_{n-k:n}}\right)  \right\}  .
\end{align*}
By the mean value theorem, there exists a value $\overline{\gamma}_{1}$ lying
between $\widetilde{\gamma}_{1}$ and $\gamma_{1}^{\ast}$ such that
\[
\int_{1}^{\infty}\left\{  \Psi_{\widetilde{\gamma}_{1},\alpha+1}^{\left(
3\right)  }\left(  x\right)  -\Psi_{\gamma_{1}^{\ast},\alpha+1}^{\left(
3\right)  }\left(  x\right)  \right\}  dx=\left(  \widetilde{\gamma}%
-\gamma_{1}^{\ast}\right)  \int_{1}^{\infty}\frac{d^{4}}{d\gamma^{4}}%
\ell_{\overline{\gamma}_{1}}^{\alpha+1}\left(  x\right)  dx.
\]
From Proposition 8.2 in \cite{MNM2025}, the integral on the right-hand side is
stochastically finite. Since $\widetilde{\gamma}_{1}$ lies in a neighborhood
of $\gamma_{1}^{\ast},$ $\widetilde{\gamma}_{1}-\gamma_{1}^{\ast
}=O_{\mathbf{P}}\left(  1\right)  ,$ and therefore the integral difference is
$O_{\mathbf{P}}\left(  1\right)  .\medskip$

\noindent Similarly, the summation term can be expressed as a Stieltjes
integral:
\[%
\begin{array}
[c]{cl}
&
{\displaystyle\sum\limits_{i=1}^{k}}
\dfrac{\delta_{\left[  n-i+1:n\right]  }}{i}\dfrac{\overline{F}_{n}^{\left(
NA\right)  }\left(  Z_{n-i+1:n}\right)  }{\overline{F}_{n}^{\left(  NA\right)
}\left(  Z_{n-k:n}\right)  }\left\{  \Psi_{\widetilde{\gamma}_{1},\alpha
}^{\left(  3\right)  }\left(  \dfrac{Z_{n-i+1:n}}{Z_{n-k:n}}\right)
-\Psi_{\gamma_{1}^{\ast},\alpha+1}^{\left(  3\right)  }\left(  \dfrac
{Z_{n-i+1:n}}{Z_{n-k:n}}\right)  \right\}  \bigskip\\
= &
{\displaystyle\int_{1}^{\infty}}
\left\{  \Psi_{\widetilde{\gamma}_{1},\alpha}^{\left(  3\right)  }\left(
x\right)  -\Psi_{\gamma_{1}^{\ast},\alpha+1}^{\left(  3\right)  }\left(
x\right)  \right\}  d\dfrac{\overline{F}_{n}^{\left(  NA\right)  }\left(
Z_{n-k:n}x\right)  }{\overline{F}_{n}^{\left(  NA\right)  }\left(
Z_{n-k:n}\right)  }.
\end{array}
\]
Applying the mean value theorem and integration by parts, this integral is
also\textbf{ }%
\[
\left(  \widetilde{\gamma}_{1}-\gamma_{1}^{\ast}\right)  \int_{1}^{\infty}%
\Psi_{\overline{\overline{\gamma}}_{1},\alpha}^{\left(  4\right)  }\left(
x\right)  d\dfrac{\overline{F}_{n}^{\left(  NA\right)  }\left(  Z_{n-k:n}%
x\right)  }{\overline{F}_{n}^{\left(  NA\right)  }\left(  Z_{n-k:n}\right)
},
\]
where $\overline{\overline{\gamma}}_{1}$ lies between $\widetilde{\gamma}_{1}$
and $\gamma_{1}^{\ast}.$ By an integration by parts, we obtain%
\begin{align*}
&  \int_{1}^{\infty}\Psi_{\overline{\overline{\gamma}}_{1},\alpha}^{\left(
4\right)  }\left(  x\right)  d\dfrac{\overline{F}_{n}^{\left(  NA\right)
}\left(  Z_{n-k:n}x\right)  }{\overline{F}_{n}^{\left(  NA\right)  }\left(
Z_{n-k:n}\right)  }\\
&  =\Psi_{\overline{\overline{\gamma}}_{1},\alpha}^{\left(  4\right)  }\left(
1\right)  -\int_{1}^{\infty}\left\{  \dfrac{\overline{F}_{n}^{\left(
NA\right)  }\left(  Z_{n-k:n}x\right)  }{\overline{F}_{n}^{\left(  NA\right)
}\left(  Z_{n-k:n}\right)  }-\dfrac{\overline{F}\left(  Z_{n-k:n}x\right)
}{\overline{F}\left(  Z_{n-k:n}\right)  }\right\}  \phi_{\alpha,\overline
{\overline{\gamma}}_{1}}^{\left(  4\right)  }\left(  x\right)  dx\\
&  +\int_{1}^{\infty}\left\{  \dfrac{\overline{F}\left(  Z_{n-k:n}x\right)
}{\overline{F}\left(  Z_{n-k:n}\right)  }-x^{-1/\gamma_{1}}\right\}
\phi_{\alpha,\overline{\overline{\gamma}}_{1}}^{\left(  4\right)  }\left(
x\right)  dx+\int_{1}^{\infty}x^{-1/\gamma_{1}}\phi_{\alpha,\overline
{\overline{\gamma}}_{1}}^{\left(  4\right)  }\left(  x\right)  dx,
\end{align*}
where $\phi_{\alpha,\overline{\overline{\gamma}}_{1}}^{\left(  4\right)
}\left(  x\right)  :=d\Psi_{\overline{\overline{\gamma}}_{11}^{\ast},\alpha
}^{\left(  4\right)  }\left(  x\right)  /dx.$ By arguments similar to those
used earlier, the first, and fourth terms are $O_{\mathbf{P}}\left(  1\right)
$ while the second and third terms are $o_{\mathbf{P}}\left(  1\right)  .$
Hence, the integral is $O_{\mathbf{P}}\left(  1\right)  .\bigskip$

Since for a fixed $\epsilon>0,$ $\left\vert \widetilde{\gamma}_{1}-\gamma
_{1}^{\ast}\right\vert \leq\epsilon,$ is follows that $\widetilde{\gamma}%
_{1}-\gamma_{1}^{\ast}=O_{\mathbf{P}}\left(  1\right)  .$ We conclude that
\[
\pi_{k}^{\left(  3\right)  }\left(  \widetilde{\gamma}_{1}\right)  -\pi
_{k}^{\left(  3\right)  }\left(  \gamma_{1}^{\ast}\right)  =O_{\mathbf{P}%
}\left(  1\right)  ,
\]
which finally implies that
\[
\pi_{k}^{\left(  3\right)  }\left(  \widetilde{\gamma}_{1}\right)
=O_{\mathbf{P}}\left(  1\right)
\]
as claimed.
\end{proof}

\begin{proposition}
\textbf{\label{prop1}}For any $\alpha>0,$ the following identities hold.
\[%
\begin{array}
[c]{l}%
\left(  i\right)  \text{ }%
{\displaystyle\int_{1}^{\infty}}
\Psi_{\gamma_{1}^{\ast},\alpha+1}^{\left(  1\right)  }\left(  x\right)
dx=-\left(  1+\dfrac{1}{\alpha}\right)
{\displaystyle\int_{1}^{\infty}}
\Psi_{\gamma_{1}^{\ast},\alpha}^{\left(  1\right)  }\left(  x\right)
dx.\medskip\\
\left(  ii\right)  \text{ }%
{\displaystyle\int_{1}^{\infty}}
\Psi_{\gamma_{1}^{\ast},\alpha+1}^{\left(  2\right)  }\left(  x\right)
dx=-\left(  1+\dfrac{1}{\alpha}\right)
{\displaystyle\int_{1}^{\infty}}
\Psi_{\gamma_{1}^{\ast},\alpha}^{\left(  2\right)  }\left(  x\right)
dx^{-1/\gamma_{1}^{\ast}}+\eta^{\ast},\\
\text{where }\eta^{\ast}>0\text{ is as in }\left(  \ref{eta-start}\right)  .\\
\left(  iii\right)  \text{ }\int_{1}^{\infty}\Psi_{\gamma_{1}^{\ast},\alpha
+1}^{\left(  3\right)  }\left(  x\right)  dx=-\left(  1+\dfrac{1}{\alpha
}\right)  \int_{1}^{\infty}\Psi_{\gamma_{1}^{\ast},\alpha}^{\left(  3\right)
}\left(  x\right)  dx^{-1/\gamma_{1}^{\ast}}+\zeta^{\ast},\\
\text{where }\zeta^{\ast}\text{ denotes a finite constant.}%
\end{array}
\]

\end{proposition}

\begin{proof}
Let $J$ be continuous, nonincreasing, and nonnegative on $\left(  0,1\right)
,$ and satisfies $\int_{0}^{1}J\left(  s\right)  ds=1.$ From Proposition 8.1
of \cite{MNM2025}, for every $\alpha>0,$ we have
\begin{align*}
\Psi_{\gamma_{1}^{\ast},\alpha+1}^{(1)}(x)  &  =(1+1/\alpha)\ell_{\gamma
_{1}^{\ast},J}(x)\Psi_{\gamma_{1}^{\ast},\alpha}^{(1)}(x),\\
\Psi_{\gamma_{1}^{\ast},\alpha+1}^{(2)}(x)  &  =(1+1/\alpha)\ell_{\gamma
_{1}^{\ast},J}(x)\Psi_{\gamma_{1}^{\ast},\alpha}^{(2)}(x)+(1+\alpha)\left(
\Psi_{\gamma_{1}^{\ast},1}^{(1)}(x)\right)  ^{2}\ell_{\gamma_{1}^{\ast}%
,J}^{\alpha-1}(x),\\
\Psi_{\gamma_{1}^{\ast},\alpha+1}^{(3)}(x)  &  =(1+1/\alpha)\ell_{\gamma
_{1}^{\ast},J}(x)\Psi_{\gamma_{1}^{\ast},\alpha}^{(3)}(x)+g_{\gamma_{1}^{\ast
},\alpha}(x),
\end{align*}
for some integrable function $x\mapsto g_{\gamma_{1}^{\ast},\alpha}(x),$
where
\[
\ell_{\gamma_{1}^{\ast},J}(x):=J(x^{-1/\gamma_{1}^{\ast}})\ell_{\gamma
_{1}^{\ast}}(x),\text{ }x>1.
\]
Integrating each equality over $(1,\infty)$ yields the identities in
Proposition \ref{prop1}, with
\[
0<\eta^{\ast}=(1+\alpha)\int_{1}^{\infty}\left(  \Psi_{\gamma_{1}^{\ast}%
,1}^{(1)}(x)\right)  ^{2}\ell_{\gamma_{1}^{\ast}}^{\alpha-1}(x)\,dx<\infty
,\quad\zeta^{\ast}=\int_{1}^{\infty}g_{\gamma_{1}^{\ast},\alpha}%
(x)\,dx<\infty.
\]
In the present paper, we take $J(x)=1$, which reduces $\ell_{\gamma_{1}^{\ast
},J}(x)$ to $\ell_{\gamma_{1}^{\ast}}(x).$ This choice trivially satisfies all
the above regularity conditions, so the identities remain valid for the strict
Pareto law. This completes the proof of Proposition~\ref{prop1}.
\end{proof}

\begin{proposition}
\label{prop2} The asymptotic variance%
\begin{equation}%
\begin{array}
[c]{l}%
\sigma^{2}:=%
{\displaystyle\int\nolimits_{1}^{\infty}}
{\displaystyle\int\nolimits_{1}^{\infty}}
\min(x^{-1/\gamma^{\ast}},y^{-1/\gamma^{\ast}})K(x,y)\,dx\,dy\\
\ \ \ \ \ \ \ \ \ \ \ \ \ \ \ -2%
{\displaystyle\int\nolimits_{1}^{\infty}}
x^{-1/\gamma^{\ast}}\Delta_{1}(x)\,dx+p\left(
{\displaystyle\int\nolimits_{1}^{\infty}}
x^{-1/\gamma_{1}^{\ast}}\phi_{\alpha,\gamma_{1}^{\ast}}(x)\,dx\right)  ^{2},
\end{array}
\label{sigma}%
\end{equation}
where
\begin{align*}
K(x,y)  &  :=p\Delta_{1}(x)\Delta_{1}(y)+\frac{q}{\gamma_{1}^{\ast}}\Delta
_{2}(x)\Delta_{2}(y),\\
\Delta_{1}(x)  &  :=x^{q/\gamma^{\ast}}\phi_{\alpha,\gamma_{1}^{\ast}%
}(x)-qx^{1/\gamma^{\ast}}V(x),\quad\Delta_{2}(x):=x^{1/\gamma^{\ast}}V(x),\\
V(x)  &  :=\int_{x}^{\infty}t^{-1/\gamma_{1}^{\ast}}\phi_{\alpha,\gamma
_{1}^{\ast}}(t)\,dt.
\end{align*}
Here $\phi_{\alpha,\gamma_{1}^{\ast}}$ is defined in \ref{phi}. Moreover,
$\gamma^{\ast}$ and $\gamma_{1}^{\ast}$ denote the true values of the tail
indices $\gamma$ and $\gamma_{1}$ associated with the cdfs $\overline{H}$ and
$\overline{F}$ respectively.
\end{proposition}

\begin{proof}
Recalling that $q=1-p$ and $\gamma_{2}^{\ast}=\gamma^{\ast}/q,$ and the
expression of $J_{n}(x)$ given in $\left(  \ref{Jn}\right)  ,$ we decompose
\[
\mathbb{I}_{n}:=\int_{1}^{\infty}J_{n}(x)\phi_{\alpha,\gamma_{1}^{\ast}%
}(x)\,dx=\mathbb{I}_{n1}+\mathbb{I}_{n2},
\]
where
\[
\mathbb{I}_{n1}:=\int_{1}^{\infty}J_{n1}(x)\phi_{\alpha,\gamma_{1}^{\ast}%
}(x)\,dx\text{ and }\mathbb{I}_{n2}:=\int_{1}^{\infty}J_{n2}(x)\phi
_{\alpha,\gamma_{1}^{\ast}}(x)\,dx.
\]
with%
\[
J_{n1}\left(  x\right)  :=\sqrt{\frac{n}{k}}\left\{  x^{q/\gamma^{\ast}%
}\mathbf{W}_{n,1}\left(  \frac{k}{n}x^{-1/\gamma^{\ast}}\right)
-x^{-1/\gamma_{1}^{\ast}}\mathbf{W}_{n,1}\left(  \frac{k}{n}\right)  \right\}
,
\]
and
\[
J_{n2}\left(  x\right)  :=\dfrac{x^{-1/\gamma_{1}^{\ast}}}{\gamma^{\ast}}%
\sqrt{\dfrac{n}{k}}%
{\displaystyle\int_{1}^{x}}
u^{1/\gamma^{\ast}-1}\left\{  p\mathbf{W}_{n,2}\left(  \dfrac{k}%
{n}u^{-1/\gamma^{\ast}}\right)  -q\mathbf{W}_{n,1}\left(  \dfrac{k}%
{n}u^{-1/\gamma^{\ast}}\right)  \right\}  du.
\]
It is straightforward to verify that
\[
\mathbb{I}_{n1}=\sqrt{\frac{n}{k}}\int_{1}^{\infty}x^{q/\gamma^{\ast}}%
W_{n,1}\left(  \frac{k}{n}x^{-1/\gamma^{\ast}}\right)  \phi_{\alpha,\gamma
_{1}^{\ast}}(x)\,dx-a\sqrt{\frac{n}{k}}W_{n,1}\left(  \frac{k}{n}\right)  ,
\]
where $a:=\int_{1}^{\infty}x^{-q/\gamma^{\ast}}\phi_{\alpha,\gamma_{1}^{\ast}%
}(x)\,dx.$ Integrating $\mathbb{I}_{n2}$ by parts, yields
\[
\mathbb{I}_{n2}=\frac{1}{\gamma_{1}^{\ast}}\sqrt{\frac{n}{k}}\int_{1}^{\infty
}x^{q/\gamma^{\ast}}\left\{  pW_{n,2}\left(  \frac{k}{n}x^{-1/\gamma^{\ast}%
}\right)  -qW_{n,1}\left(  \frac{k}{n}x^{-1/\gamma^{\ast}}\right)  \right\}
V(x)\,dx.
\]
Hence, $\mathbb{I}_{n}$ can be rewritten as%
\begin{align*}
\mathbb{I}_{n1} &  =\sqrt{\frac{n}{k}}\int_{1}^{\infty}W_{n,1}\left(  \frac
{k}{n}x^{-1/\gamma_{1}^{\ast}}\right)  \Delta_{1}(x)\,dx-a\sqrt{\frac{n}{k}%
}W_{n,1}\left(  \frac{k}{n}\right)  ,\\
\mathbb{I}_{n2} &  =\frac{1}{\gamma_{1}^{\ast}}\sqrt{\frac{n}{k}}\int%
_{1}^{\infty}W_{n,2}\left(  \frac{k}{n}x^{-1/\gamma^{\ast}}\right)  \Delta
_{2}(x)\,dx.
\end{align*}
Since $W_{n,1}$ and $W_{n,2}$ are centered and independent Gaussian processes,
the rvs $\mathbb{I}_{n1},$ $\mathbb{I}_{n2},$ and their product $\mathbb{I}%
_{n1}\times\mathbb{I}_{n2}$ are centered. Taking expectations, we obtain
\begin{align*}
\mathbf{E}[\mathbb{I}_{n1}^{2}] &  =p\int_{1}^{\infty}\int_{1}^{\infty}%
\min(x^{-1/\gamma^{\ast}},y^{-1/\gamma^{\ast}})\Delta_{1}(x)\Delta
_{1}(y)\,dx\,dy+pa^{2}-2\int_{1}^{\infty}x^{-1/\gamma^{\ast}}\Delta
_{1}(x)\,dx.\\
\mathbf{E}[\mathbb{I}_{n2}^{2}] &  =\frac{q}{\gamma_{1}^{\ast}}\int%
_{1}^{\infty}\int_{1}^{\infty}\min(x^{-1/\gamma_{1}^{\ast}},y^{-1/\gamma
_{1}^{\ast}})\Delta_{2}(x)\Delta_{2}(y)\,dx\,dy,
\end{align*}
The sum of these two expressions yields the asymptotic variance formula
$\sigma^{2}$ given in $\left(  \ref{sigma}\right)  ,$ which completes the proof.
\end{proof}

\newpage

\section{\textbf{Appendix B: Simulation results}}%

\begin{figure}[h]%
\centering
\includegraphics[
height=3.0926in,
width=4.6345in
]%
{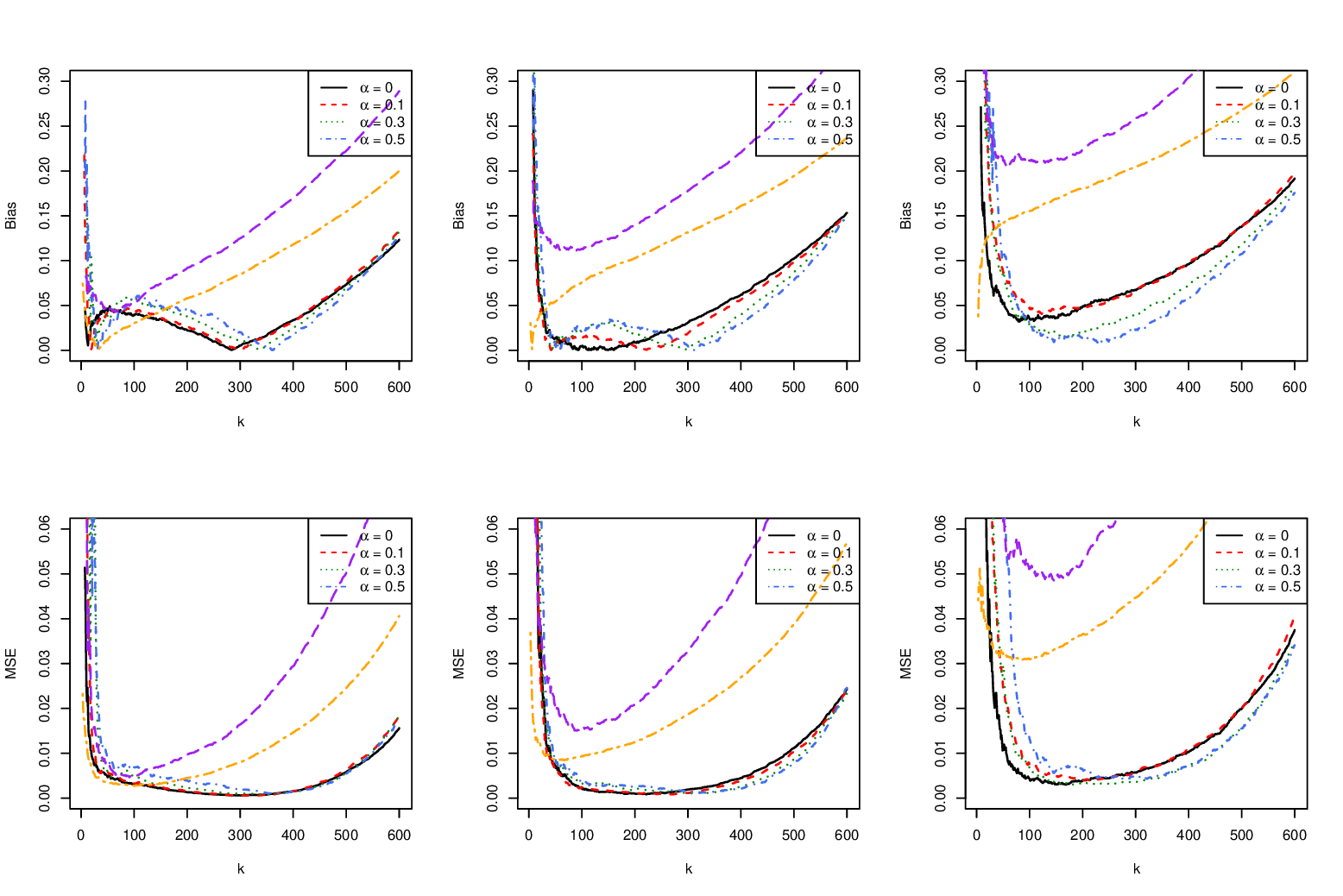}%
\caption{Bias (top panel) and MSE (bottom panel) of the MDPD tail index
estimator $\protect\widehat{\gamma}_{1,\alpha},$ together with
$\protect\widehat{\gamma}_{1}^{(EFG)}$ (long-dashed purple line) and
$\protect\widehat{\gamma}_{1}^{(W)}$\ (two-dashed orange line) computed from
$2000$ Monte Carlo samples of size $1000$ genrated from Scenario \textbf{S1},
with $\gamma_{1}=0.3,$ $\gamma_{c}=0.6$ and $p=0.55.$ Contamination is
introduced before the censoring mechanism, with $\epsilon=0$ (left panel),
$\epsilon=0.15$ (middle panel), and $\epsilon=0.40$ (right panel).}%
\label{fig1}%
\end{figure}
%

\begin{figure}[h]%
\centering
\includegraphics[
height=3.0926in,
width=4.6345in
]%
{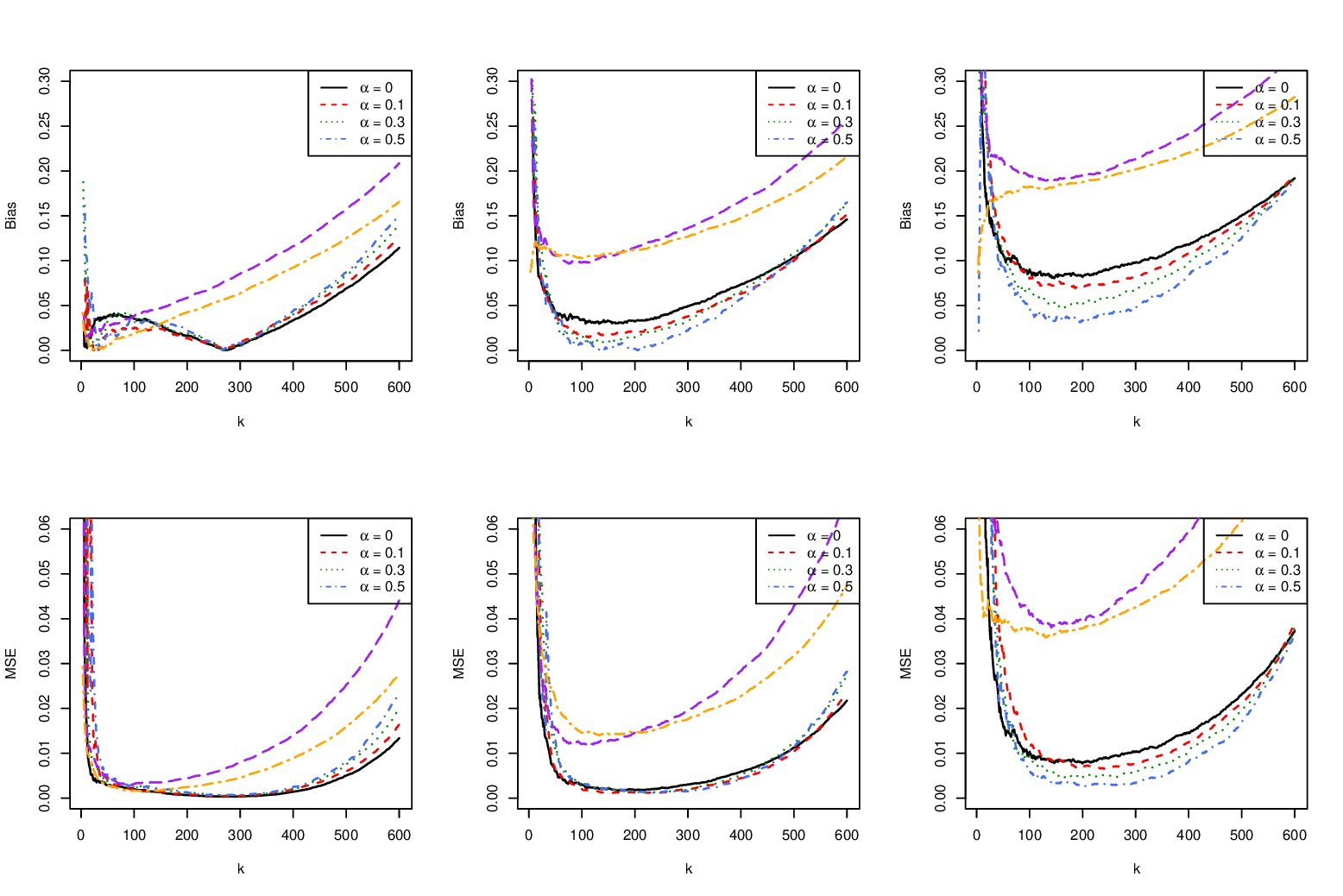}%
\caption{Bias (top panel) and MSE (bottom panel) of the MDPD tail index
estimator $\protect\widehat{\gamma}_{1,\alpha},$ together with
$\protect\widehat{\gamma}_{1}^{(EFG)}$ (long-dashed purple line) and
$\protect\widehat{\gamma}_{1}^{(W)}$\ (two-dashed orange line) computed from
$2000$ Monte Carlo samples of size $1000$ genrated from Scenario \textbf{S1},
with $\gamma_{1}=0.3,$ $\gamma_{c}=0.6$ and $p=0.7.$ Contamination is
introduced before the censoring mechanism, with $\epsilon=0$ (left panel),
$\epsilon=0.15$ (middle panel), and $\epsilon=0.40$ (right panel).}%
\label{fig2}%
\end{figure}
%

\begin{figure}[h]%
\centering
\includegraphics[
height=3.0926in,
width=4.6345in
]%
{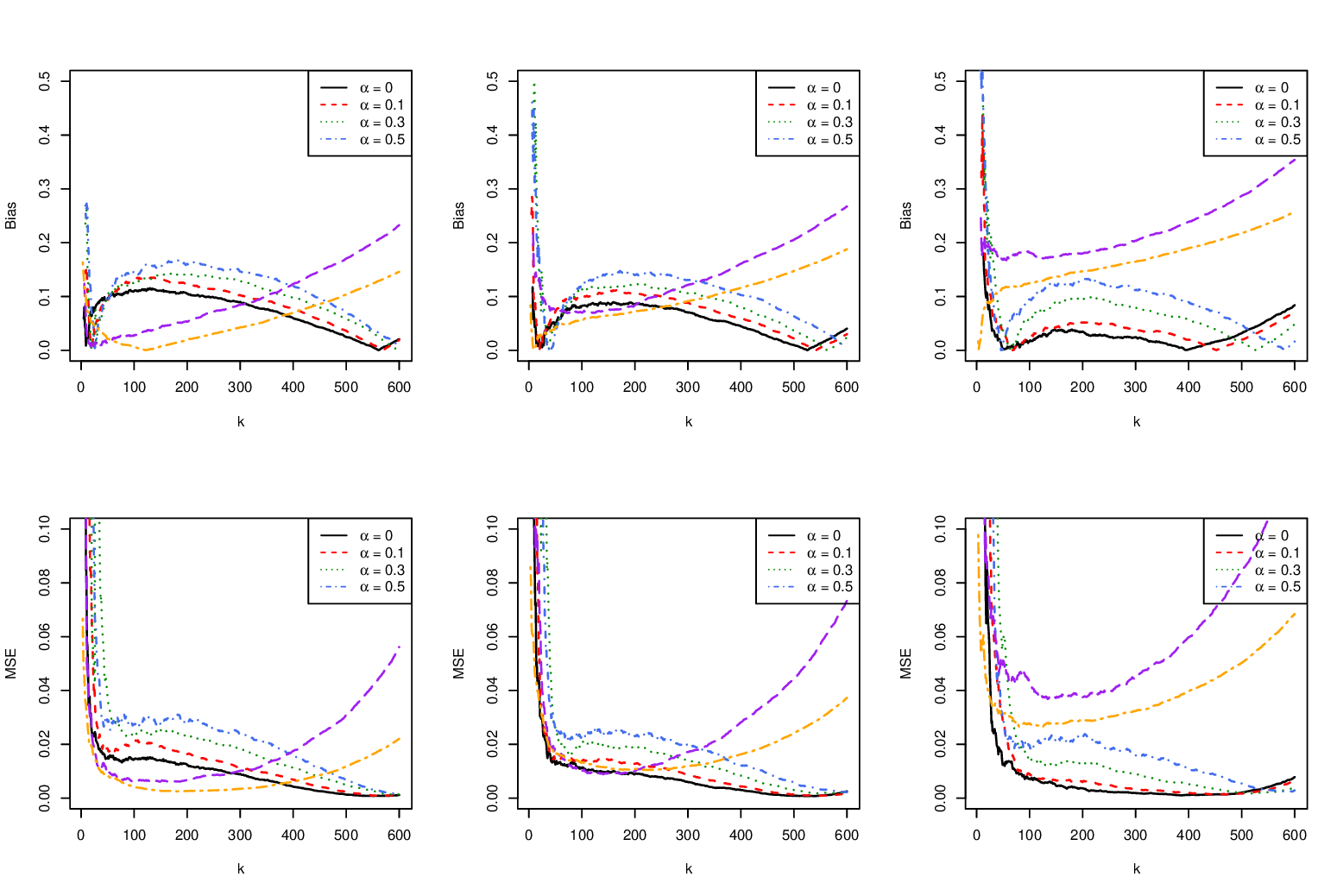}%
\caption{Bias (top panel) and MSE (bottom panel) of the MDPD tail index
estimator $\protect\widehat{\gamma}_{1,\alpha},$ together with
$\protect\widehat{\gamma}_{1}^{(EFG)}$ (long-dashed purple line) and
$\protect\widehat{\gamma}_{1}^{(W)}$\ (two-dashed orange line) computed from
$2000$ Monte Carlo samples of size $1000$ genrated from Scenario \textbf{S1},
with $\gamma_{1}=0.5,$ $\gamma_{c}=0.8$ and $p=0.55.$ Contamination is
introduced before the censoring mechanism, with $\epsilon=0$ (left panel),
$\epsilon=0.15$ (middle panel), and $\epsilon=0.40$ (right panel).}%
\label{fig3}%
\end{figure}
%

\begin{figure}[h]%
\centering
\includegraphics[
height=3.0926in,
width=4.6345in
]%
{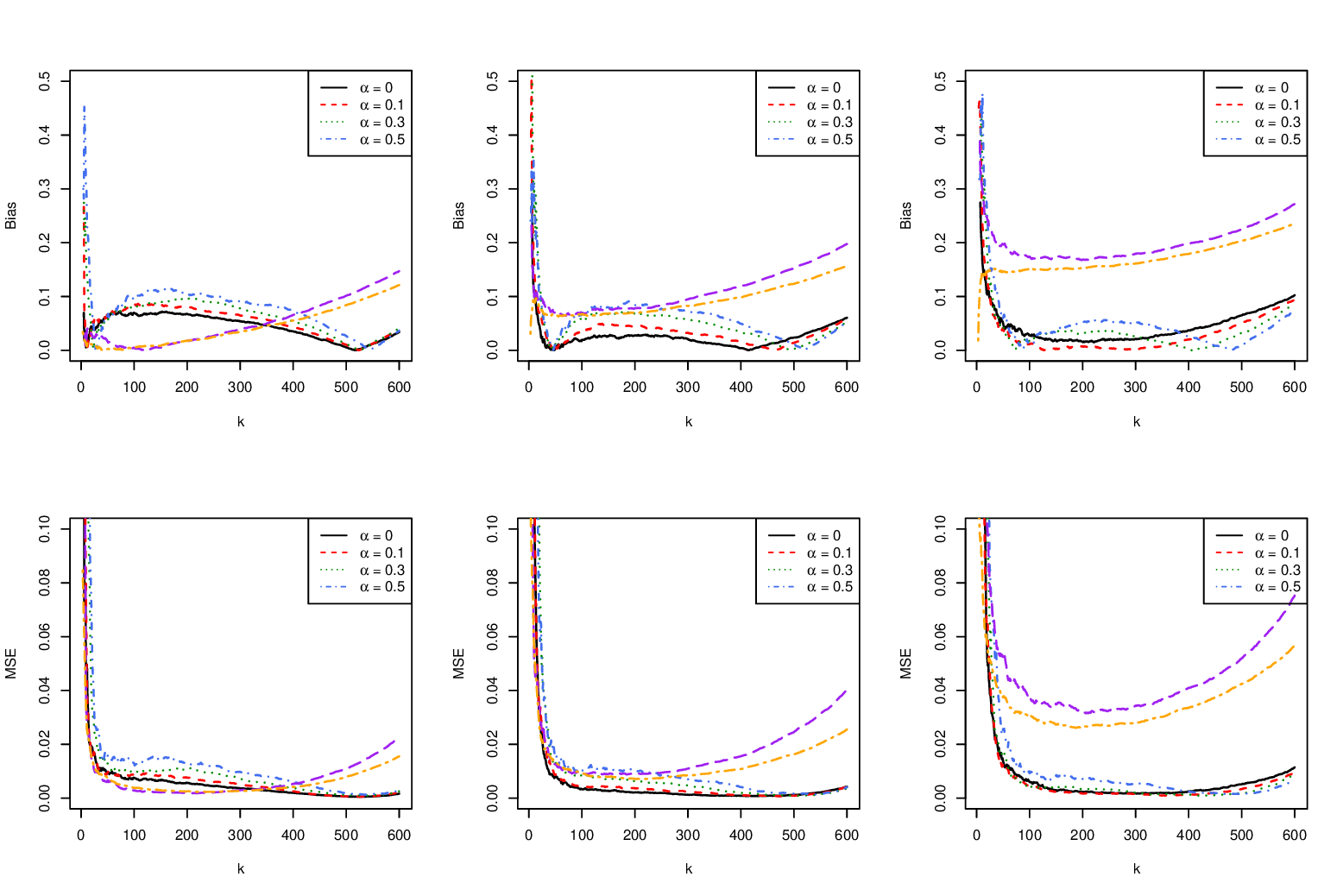}%
\caption{Bias (top panel) and MSE (bottom panel) of the MDPD tail index
estimator $\protect\widehat{\gamma}_{1,\alpha},$ $\protect\widehat{\gamma}%
_{1}^{(EFG)}$ (longdashed purple line) and $\protect\widehat{\gamma}_{1}%
^{(W)}$\ (twodashed orange line) based on $2000$ samples of size $1000$ from
scenario \textbf{S1}, for $\gamma_{1}=0.5,$ $\gamma_{c}=0.8$ and $p=0.7,$
under contamination introduced before the censoring mechanism, with
$\epsilon=0$ (left panel), $\epsilon=0.15$ (middle panel) and $\epsilon=0.40$
(right panel).}%
\label{fig4}%
\end{figure}
%

\begin{figure}[h]%
\centering
\includegraphics[
height=3.0926in,
width=4.6345in
]%
{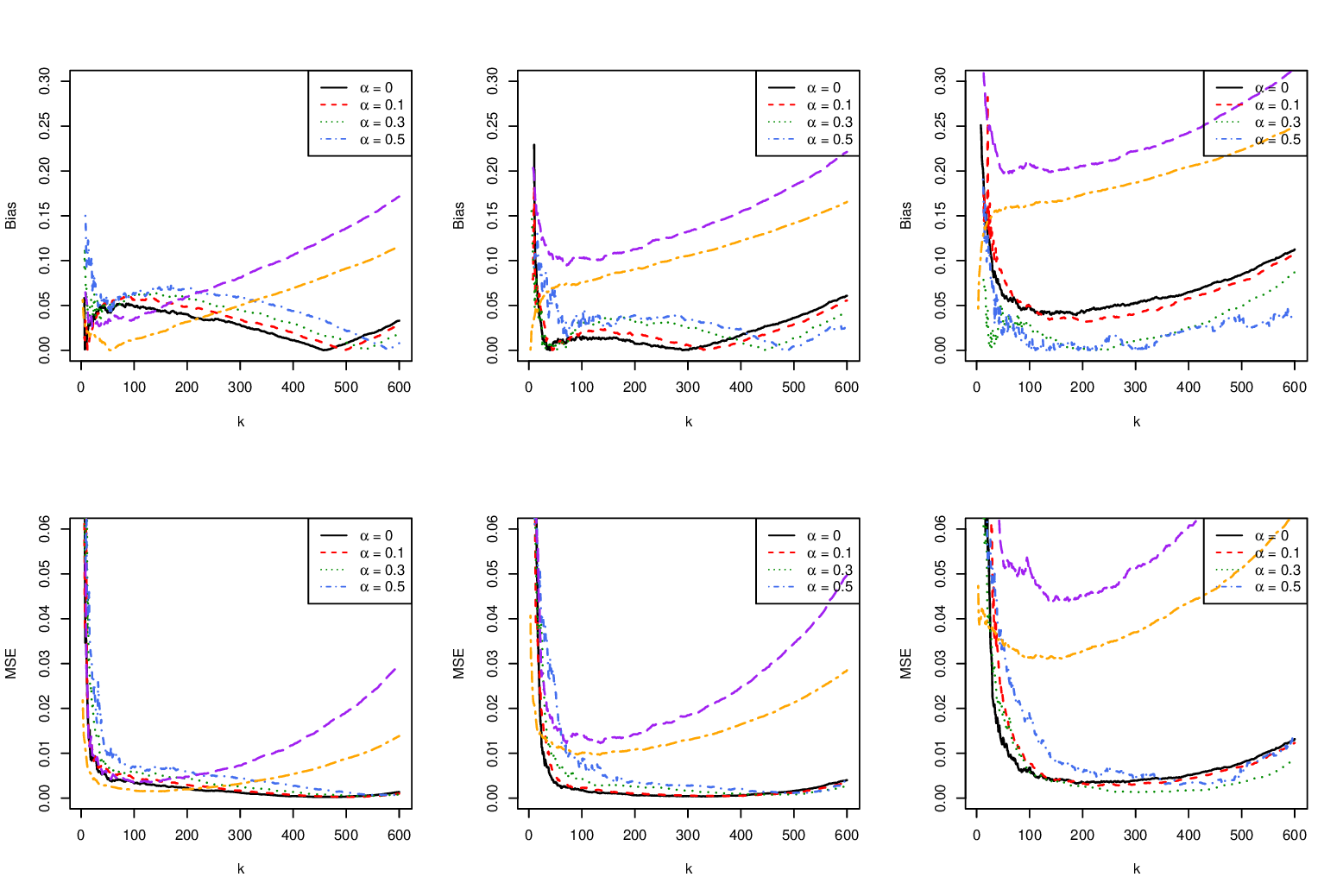}%
\caption{Bias (top panel) and MSE (bottom panel) of the MDPD tail index
estimator $\protect\widehat{\gamma}_{1,\alpha},$ together with
$\protect\widehat{\gamma}_{1}^{(EFG)}$ (long-dashed purple line) and
$\protect\widehat{\gamma}_{1}^{(W)}$\ (two-dashed orange line) computed from
$2000$ Monte Carlo samples of size $1000$ genrated from Scenario \textbf{S2},
with $\gamma_{1}=0.3,$ $\gamma_{c}=0.6$ and $p=0.55.$ Contamination is
introduced before the censoring mechanism, with $\epsilon=0$ (left panel),
$\epsilon=0.15$ (middle panel), and $\epsilon=0.40$ (right panel).}%
\label{fig5}%
\end{figure}
%

\begin{figure}[h]%
\centering
\includegraphics[
height=3.0926in,
width=4.6345in
]%
{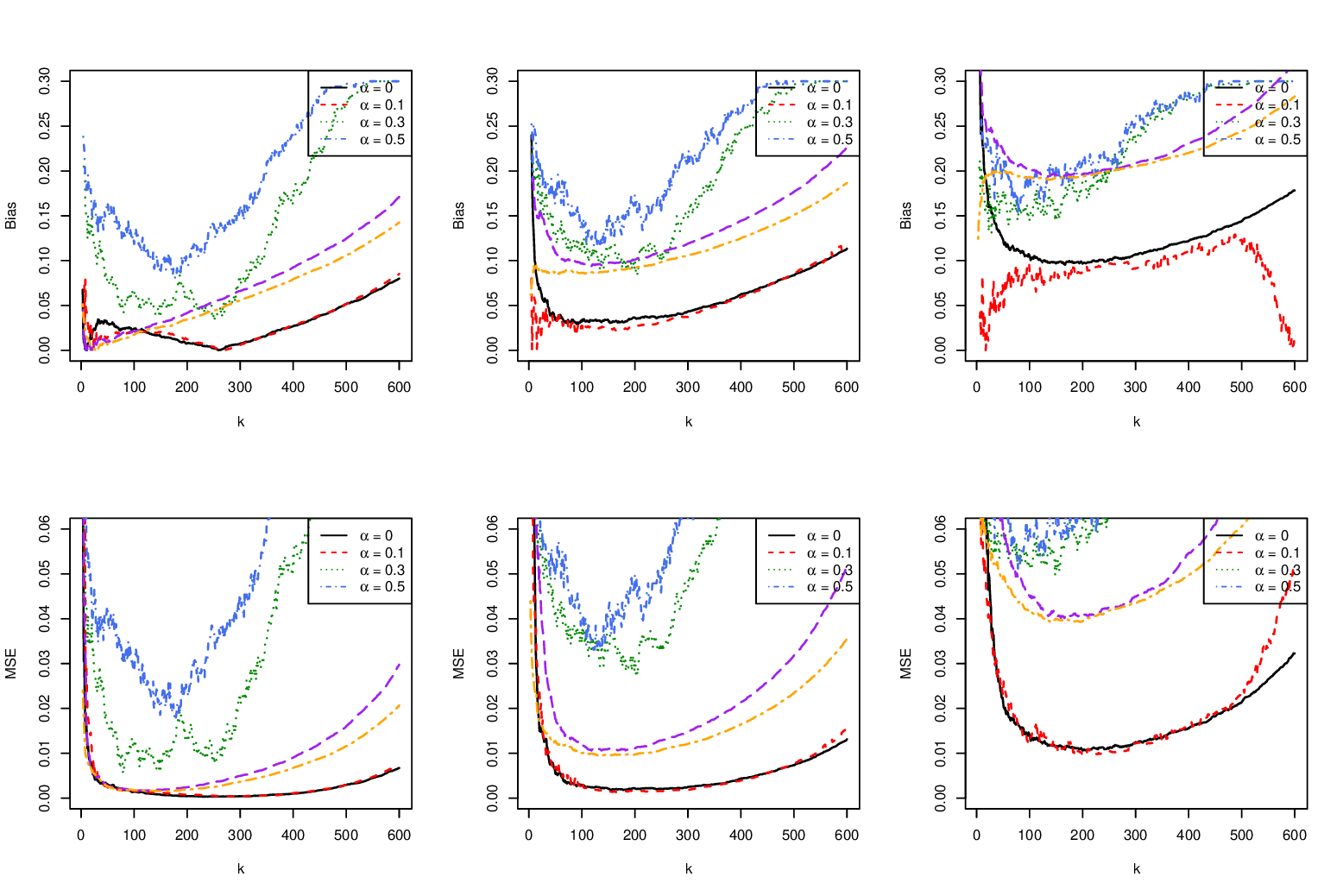}%
\caption{Bias (top) and MSE (bottom) of the MDPD tail index estimator
$\protect\widehat{\gamma}_{1,\alpha},$ together with $\protect\widehat{\gamma
}_{1}^{(EFG)}$ (long-dashed purple line) and $\protect\widehat{\gamma}%
_{1}^{(W)}$\ (two-dashed orange line) computed from $2000$ Monte Carlo samples
of size $1000$ genrated from Scenario \textbf{S2}, with $\gamma_{1}=0.3,$
$\gamma_{c}=0.6$ and $p=0.7.$ Contamination is introduced before the censoring
mechanism, with $\epsilon=0$ (left panel), $\epsilon=0.15$ (middle panel), and
$\epsilon=0.40$ (right panel).}%
\label{fig6}%
\end{figure}
%

\begin{figure}[h]%
\centering
\includegraphics[
height=3.0926in,
width=4.6345in
]%
{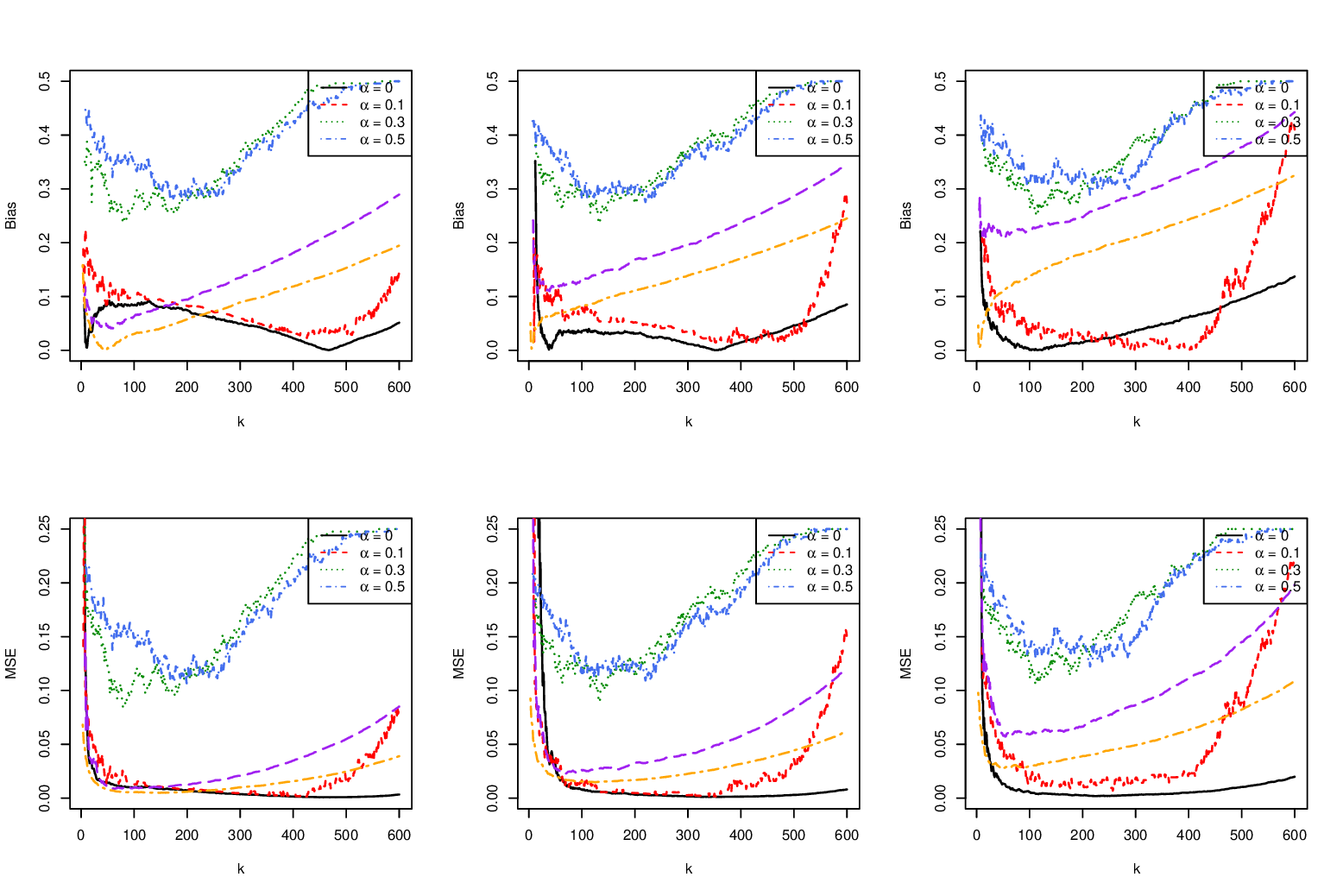}%
\caption{Bias (top panel) and MSE (bottom panel) of the MDPD tail index
estimator $\protect\widehat{\gamma}_{1,\alpha},$ together with
$\protect\widehat{\gamma}_{1}^{(EFG)}$ (long-dashed purple line) and
$\protect\widehat{\gamma}_{1}^{(W)}$\ (two-dashed orange line) computed from
$2000$ Monte Carlo samples of size $1000$ genrated from Scenario \textbf{S2},
with $\gamma_{1}=0.5,$ $\gamma_{c}=0.8$ and $p=0.55.$ Contamination is
introduced before the censoring mechanism, with $\epsilon=0$ (left panel),
$\epsilon=0.15$ (middle panel), and $\epsilon=0.40$ (right panel).}%
\label{fig7}%
\end{figure}
%

\begin{figure}[h]%
\centering
\includegraphics[
height=3.0926in,
width=4.6345in
]%
{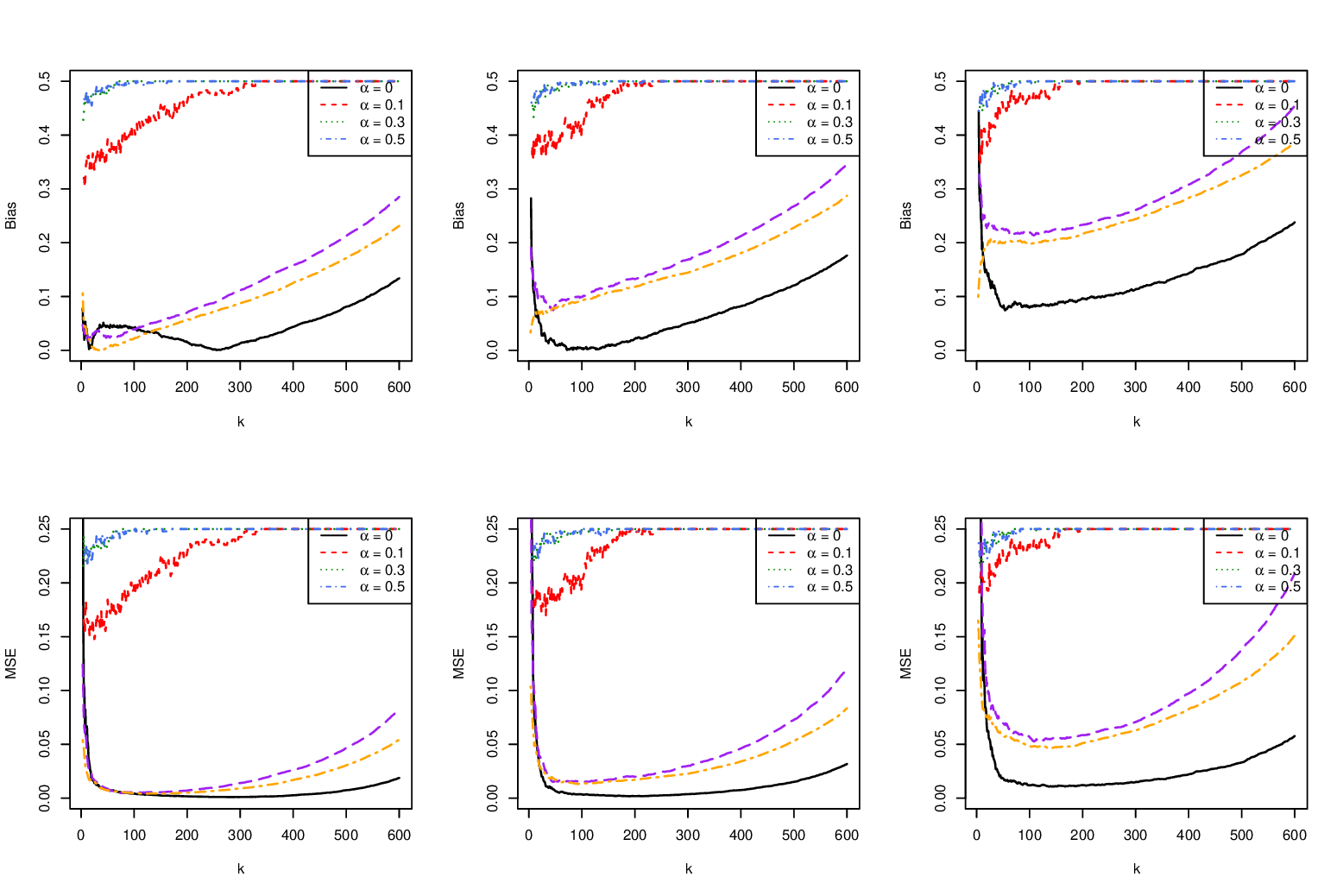}%
\caption{Bias (top panel) and MSE (bottom panel) of the MDPD tail index
estimator $\protect\widehat{\gamma}_{1,\alpha},$ together with
$\protect\widehat{\gamma}_{1}^{(EFG)}$ (long-dashed purple line) and
$\protect\widehat{\gamma}_{1}^{(W)}$\ (two-dashed orange line) computed from
$2000$ Monte Carlo samples of size $1000$ genrated from Scenario \textbf{S2},
with $\gamma_{1}=0.5,$ $\gamma_{c}=0.8$ and $p=0.7.$ Contamination is
introduced before the censoring mechanism, with $\epsilon=0$ (left panel),
$\epsilon=0.15$ (middle panel), and $\epsilon=0.40$ (right panel).}%
\label{fig8}%
\end{figure}

\end{document}